\newtheorem{thm}{Theorem}[section]
\newtheorem{lem}[thm]{Lemma}
\newtheorem{prop}[thm]{Proposition}
\newtheorem{cor}[thm]{Corollary}
\newtheorem*{remark}{Remark}
\theoremstyle{definition}
\newtheorem*{defin}{Definition}
\DeclareMathOperator{\hess}{Hess}
\DeclareMathOperator{\Div}{div}                                                                                
\newcommand{\ric}{\mathrm{Ric}}
\newcommand\rr{\mathbb{R}}
\newcommand\zz{\mathbb{Z}}
\newcommand\isom{\cong}
\newcommand\too{\longrightarrow}
\DeclareMathOperator{\tr}{tr}
\newcommand\gen{\mathfrak{g}}     
\begin{document}

\title{The Penrose inequality for asymptotically locally hyperbolic spaces with nonpositive mass}
\author{Dan A. Lee\and Andr\'e Neves}
\maketitle
\begin{abstract}
 In the  asymptotically {\em locally }hyperbolic setting it is possible to have metrics with scalar curvature $\ge-6$ and \emph{negative} mass when the genus of the conformal boundary at infinity is positive. Using inverse mean curvature flow, we prove a Penrose inequality for these negative mass metrics. The motivation comes from a previous result of P. Chru\'{s}ciel and W. Simon, which  states that the Penrose inequality we prove implies a static uniqueness theorem for negative mass Kottler metrics. 
 \end{abstract}

\section{Introduction}

The Penrose inequality for asymptotically flat $3$-manifolds $M$ with mass $m$ and nonnegative scalar curvature states that if $\partial M$ is an outermost minimal surface (\textit{i.e.}, there are no compact minimal surfaces separating $\partial M$ from infinity), then
$$m\geq \sqrt{\frac{A}{16\pi}},$$
where $A$ is the area of $\partial M$.

G. Huisken and T. Ilmanen \cite{Huisken-Ilmanen:2001} first proved this inequality for $A$ equal to the largest area of a connected component of $\partial M$. H. Bray \cite{Bray:2001} later proved the more general inequality described above using a different method, and this result was later extended to dimensions less than $8$ by Bray and the first author \cite{Bray-Lee:2009}.

We are interested in an analog of this theorem for  a class of asymptotically locally hyperbolic manifolds which we now define.
\begin{defin}\label{asym.hyper.definition}
We say that a $C^i$ Riemannian metric $g$ on a smooth manifold  $M^3$ is \emph{$C^i$ asymptotically locally hyperbolic} if there exists a compact set $K\subset M$ and a constant curvature surface $(\hat{\Sigma},\hat{g})$, called the {\em conformal infinity} of $(M,g)$, such that $M\smallsetminus K$ is diffeomorphic to $(1,\infty)\times \hat{\Sigma}$ with the metric satisfying 
\[ g = (\hat{k} +\rho^2)^{-1} d\rho^2 + \rho^2 \hat{g} + \rho^{-1} h +Q, \] 
where 
\begin{itemize}
\item $\hat{k}$ is the constant curvature of $(\hat{\Sigma},\hat{g})$;
\item $\rho$ is the coordinate on $(1,\infty)$;
\item  $Q$ is a $C^i$ symmetric two-tensor on $M$  so that
\[ |Q|_b + \rho|\bar{\nabla} Q|_b + \cdots+ \rho^i |\bar{\nabla}^i Q|_b = o(\rho^{-3}), \]
where $b$ is the hyperbolic metric $(\hat{k} +\rho^2)^{-1} d\rho^2 + \rho^2 \hat{g}$ and $\bar{\nabla}$ are derivatives taken with respect to $b$. We will use the notation $Q=o_i(\rho^{-3})$ as a convenient abbreviation;
\item $h$ is a $C^i$ symmetric two-tensor on $\hat{\Sigma}$ depending on $\rho$ in such a way that there exists a function $\mu$ on $\hat{\Sigma}$, called the \emph{mass aspect function}, such that
$\lim_{\rho\to\infty}\tfrac{3}{4}\tr_{\hat{g}} h=\mu$ , where the  convergence is in $C^i$.
\end{itemize}
For the sake of convenience, we assume that $\hat{k}$ is $1$, $0$, or $-1$, and in the case $\hat{k}=0$, we further assume that $|\hat{\Sigma}|_{\hat{g}}=4\pi$. (These assumptions simply serve the purpose of normalization.)

Finally, we define the \emph{mass} to be 
\[ m=\fint_{\hat{\Sigma}} \mu\,d\hat{g}=\frac{1}{|\hat\Sigma|_{\hat g}}\int_{\hat\Sigma} \mu\, d\hat{g}, \]
and we also define
\[ \bar{m}=\sup_{\hat{\Sigma}} \mu.\] 
\end{defin}

An important class of asymptotically locally hyperbolic manifolds is given by the Kottler metrics (see  \cite{Chrusciel-Simon:2001} for instance), which are static metrics with cosmological constant $\Lambda=-3$.

\begin{defin}
Let $(\hat{\Sigma}, \hat{g})$ be a surface with constant curvature $\hat{k}$ equal to $1$, $0$, or $-1$, with area equal to $4\pi$ in the $\hat{k}=0$ case.  Let $m\in\rr$ be large enough so that the function 
\[V(r)=\sqrt{r^2+\hat{k}-\frac{2m}{r}}\] 
has a nonnegative zero. Let $r_m$ be the largest zero of $V$,  and define the metric
\[g=V^{-2}dr^2+r^2 \hat{g}\] 
on $(r_m,\infty)\times\hat{\Sigma}$. Define $(M,g)$ to be the metric completion of this Riemannian manifold.
We say that $(M,g)$  is a \emph{Kottler space with conformal infinity $(\hat{\Sigma},\hat{g})$ and mass $m$}.
\end{defin}
\begin{remark}
\begin{itemize}
\item The Kottler metrics have scalar curvature $R=-6$. 
\item The most familiar situation is when $\hat{\Sigma}$ is a sphere $(\hat{k}=1)$, in which case the metric is also called an \emph{anti-de Sitter--Schwarzschild metric} in the literature.
\item  As long as $V(r)$ has a \emph{positive} largest zero $r_m$, we obtain $M=[r_m, \infty)\times \hat \Sigma$ with $\partial M=\{r_m\}\times\hat\Sigma$ as an outermost minimal surface boundary. One can see that these metrics are asymptotically locally hyperbolic by performing a substitution, in which case one has $h= \frac{2}{3}m\hat g$.  
\item In order for $V(r)$ to have a \emph{positive zero}, we must have $m>0$ when $\hat k=0$ or $\hat k=-1$.
However, when $\hat{k}=-1$,  the parameter $m$ need not be positive but only greater than a critical mass $m_\text{crit}=-\frac{1}{3\sqrt{3}}$. 
\item When the largest zero of $V(r)$ is exactly $0$, we say that $(M,g)$ is a \emph{critical} Kottler space.  When $\hat{k}=-1$ and $m=m_\text{crit}$, the metric $g$ on $(0, \infty)\times \hat \Sigma$
is a two-ended complete Riemannian manifold, with one end asymptotically locally hyperbolic and the other end asymptotic to the cylindrical metric $dt^2+\frac 1 3 \hat g$ on $\rr\times \hat \Sigma$. When $\hat{k}=0$ and $m=0$, the metric $g$ can be written as  $dt^2+e^{2t} \hat g$ on $\rr\times \hat \Sigma$ after a coordinate change, and of course, when $\hat{k}=1$ and $m=0$, the completed metric $(M,g)$ is just hyperbolic space.
\end{itemize}
\end{remark}

We can now state our main theorem.

\begin{thm}[Penrose Inequality for nonpositive mass]\label{penrose.thm}
Let $(M^3,g)$ be a   $C^2$ asymptotically locally hyperbolic manifold with $\bar m\leq 0$ and conformal infinity $(\hat{\Sigma}, \hat{g})$, whose genus is $\gen$.

Assume that $R\geq-6$, $\partial M$ is an outermost minimal surface, and  there is a boundary component $\partial_1 M$ of genus $\gen$. 
 Then
\begin{equation}\label{penrose.inequality}
 \bar{m}
\ge \frac{1}{\gamma} \sqrt{\frac{A}{16\pi}}\left(1-\gen+\frac{A}{4\pi}\right),
\end{equation}
where  $A$ is the area of $\partial_1M$,
and  $\gamma=\left(\max\{1, \gen -1\}\right)^{3/2}$ is a topological constant.
 
 Furthermore, equality occurs if and only if $(M,g)$ is isometric to the Kottler space with infinity $(\hat{\Sigma}, \hat{g})$ and mass $\bar{m}$.
\end{thm}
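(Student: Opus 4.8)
The plan is to adapt Huisken--Ilmanen's inverse mean curvature flow (IMCF) argument \cite{Huisken-Ilmanen:2001}, replacing the Hawking mass by a quantity tuned to the genus $\gen$. For a closed surface $\Sigma\subset M$ with mean curvature $H$, set
\[
\mathfrak{m}(\Sigma)=\frac{|\Sigma|^{1/2}}{(16\pi)^{3/2}\gamma}\left(16\pi(1-\gen)+4|\Sigma|-\int_{\Sigma}H^{2}\,d\mu\right).
\]
Two endpoint computations justify this choice. On the coordinate spheres $\{r\}\times\hat\Sigma$ of a Kottler space one has $|\Sigma|=r^{2}|\hat\Sigma|$ and $H=2V/r$, and using the Gauss--Bonnet identities $\hat k\,|\hat\Sigma|=4\pi(1-\gen)$ and $|\hat\Sigma|=4\pi\max\{1,\gen-1\}$ one checks directly that $\mathfrak{m}\equiv m$, so $\mathfrak{m}$ is the correct notion of mass at infinity and is consistent with Definition~\ref{asym.hyper.definition} in the Kottler case. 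On the other hand, if $\Sigma$ is minimal of area $A$ then $\mathfrak{m}(\Sigma)=\tfrac{1}{\gamma}\sqrt{A/16\pi}\,(1-\gen+A/4\pi)$, exactly the right-hand side of \eqref{penrose.inequality}. Thus it suffices to prove $\mathfrak{m}(\partial_{1}M)\le\bar m$.

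I would run the weak IMCF $\{\Sigma_{t}\}_{t\ge0}$ of \cite{Huisken-Ilmanen:2001} starting at $\partial_{1}M$; since $\partial M$ is outermost minimal the flow starts cleanly, exists for all time, is proper, and escapes every compact set. The first key step is Geroch-type monotonicity: differentiating, integrating $\int_{\Sigma_{t}}H\,\Delta(1/H)$ by parts, and substituting the Gauss equation together with $R\ge-6$ gives
\[
\frac{d}{dt}\,\mathfrak{m}(\Sigma_{t})\ \ge\ \frac{|\Sigma_{t}|^{1/2}}{(16\pi)^{3/2}\gamma}\left(\int_{\Sigma_{t}}\left(\frac{2|\nabla H|^{2}}{H^{2}}+|A|^{2}-\tfrac{1}{2}H^{2}\right)d\mu+8\pi(1-\gen)-2\int_{\Sigma_{t}}K\,d\mu\right),
\]
where $|A|^{2}-\tfrac12 H^{2}\ge0$ is the squared norm of the trace-free second fundamental form; by Gauss--Bonnet the remaining difference equals $8\pi(\gen_{t}-\gen)$ when $\Sigma_{t}$ is connected of genus $\gen_{t}$. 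Hence $\mathfrak{m}$ is nondecreasing provided every flow surface has genus at least $\gen$. This genus bound is the topological crux: since $M\smallsetminus K\cong(1,\infty)\times\hat\Sigma$ and each $\Sigma_{t}$ separates $\partial_{1}M$ from the conformal infinity, the composition $\Sigma_{t}\hookrightarrow M\smallsetminus K\to\hat\Sigma$ has degree one, and a degree-one map of closed oriented surfaces cannot lower the genus. For the finitely many times at which $\Sigma_{t}$ is disconnected one carries out the Huisken--Ilmanen bookkeeping, passing to the component of largest area and using connectedness of the region outside $\Sigma_{t}$; I expect this to be the fussiest part of the monotonicity step.

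The main obstacle is the second step, $\limsup_{t\to\infty}\mathfrak{m}(\Sigma_{t})\le\bar m$, and this is where $\bar m\le 0$ is essential: for positive mass the flow surfaces fail to round up fast enough (the insufficient-convergence phenomenon), so IMCF simply cannot see the sharp inequality. I would represent the large flow surfaces as graphs $\{\rho=e^{u_{t}}\}$ over $\hat\Sigma$, normalize $u_{t}=\tfrac t2+v_{t}$, and insert the asymptotic expansion of $g$ from Definition~\ref{asym.hyper.definition} into $|\Sigma_{t}|$ and $\int_{\Sigma_{t}}H^{2}$. The outcome is that $\mathfrak{m}(\Sigma_{t})$ converges to a functional of the limiting shape $v_{\infty}$ whose leading contribution is a positive multiple of $\left(\int_{\hat\Sigma}e^{2v_{\infty}}\,d\hat g\right)^{1/2}\int_{\hat\Sigma}\mu\,e^{-v_{\infty}}\,d\hat g$ together with manifestly nonnegative gradient/curvature terms; since $\mu\le\bar m\le0$ one has $\int_{\hat\Sigma}\mu e^{-v_{\infty}}\le\bar m\int_{\hat\Sigma}e^{-v_{\infty}}$, and combining this with the Hölder estimate $|\hat\Sigma|\le\left(\int e^{2v_{\infty}}\right)^{1/3}\left(\int e^{-v_{\infty}}\right)^{2/3}$ --- which reverses helpfully precisely because it is multiplied by the nonpositive number $\bar m$ --- yields $\limsup_t\mathfrak{m}(\Sigma_{t})\le\bar m$. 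Together with the monotonicity this gives $\mathfrak{m}(\partial_{1}M)\le\bar m$, which is \eqref{penrose.inequality}.

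Finally, for the rigidity statement, equality in \eqref{penrose.inequality} forces $\tfrac{d}{dt}\mathfrak{m}(\Sigma_{t})\equiv0$, so by the monotonicity formula the leaves $\Sigma_{t}$ are umbilic ($|A|^{2}=\tfrac12 H^{2}$) with $\nabla^{\Sigma_{t}}H=0$, $R\equiv-6$ along the flow, every $\Sigma_{t}$ is connected of genus exactly $\gen$, and the flow has no jump times. An umbilic, constant-$H$ foliation makes $g$ a warped product $V^{-2}dr^{2}+r^{2}g_{0}$ on the region it sweeps; imposing $R=-6$ identifies $g_{0}$ as a metric of constant curvature $\hat k$ and $V^{2}=r^{2}+\hat k-\tfrac{2m}{r}$, and matching the conformal infinity and the minimal boundary pins down $g_{0}=\hat g$ and $m=\bar m$, so $(M,g)$ is the Kottler space with infinity $(\hat\Sigma,\hat g)$ and mass $\bar m$. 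The converse is the first endpoint computation above.
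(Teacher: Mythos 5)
Your overall strategy (IMCF, a genus-adapted Hawking mass, monotonicity plus an asymptotic upper bound where $\bar m\le 0$ enters through a H\"older-type inequality with the ``right'' sign) is the same as the paper's, and you correctly locate where nonpositivity of the mass aspect is used. However, there are genuine gaps at both key steps. The most serious one is the genus lower bound needed for Geroch monotonicity. Your argument projects $\Sigma_t\hookrightarrow M\smallsetminus K\to\hat\Sigma$ and invokes a degree-one map, but this only makes sense when $\Sigma_t$ is contained in the asymptotic end; monotonicity requires $\mathrm{genus}(\Sigma_t)\ge\gen$ for (almost) every $t$, including early and intermediate times and immediately after jumps, when $\Sigma_t$ need not lie in $M\smallsetminus K$ and no projection to $\hat\Sigma$ exists. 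The paper proves this bound (Lemma \ref{topology}) by first passing to the exterior region (so there are no interior compact minimal surfaces), then minimizing area in the isotopy class of $\Sigma_t$ inside the region $M_t$ bounded by $\partial_1 M$ and $\Sigma_t$ via Meeks--Simon--Yau, using that $\gamma$-reduction preserves homology and can only decrease genus, and a homology argument to force a component isotopic to $\partial_1 M$; this also yields connectedness of $\Sigma_t$. Your substitute of ``passing to the component of largest area'' is not compatible with the monotonicity bookkeeping once the Hawking mass can be negative ($\gen\ge2$); for the same reason the standard Huisken--Ilmanen argument for jumps over obstacles (when $\partial M$ is disconnected) breaks down, and the paper needs an extra computation using the area bound $|\Sigma|\ge\tfrac{4\pi}{3}(\gen-1)$ for stable minimal surfaces of genus $\gen$ in manifolds with $R\ge-6$ (Corollary \ref{Geroch}). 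None of this is addressed in your proposal.

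The second gap is in the limit $\limsup_{t\to\infty}\mathfrak m(\Sigma_t)\le\bar m$: you assume the large flow surfaces are graphs $\{\rho=e^{t/2+v_t}\}$ converging to a limiting shape $v_\infty$ and evaluate the mass on that limit. For the weak flow no such graphicality or convergence is known -- indeed the absence of strong convergence is precisely the difficulty in this setting (cf.\ the counterexamples with positive mass aspect) -- so the existence of $v_\infty$ cannot be taken for granted. The paper avoids it entirely: it works in the conformal compactification $\tilde g=\rho^{-2}g$ with purely integral estimates ($\int_{\tilde\Sigma_t}|\tilde\nabla^Ts|^2=O(e^{-t/2})$, $|\tilde\Sigma_t|\to|\hat\Sigma|$), extracts a subsequence along which $\int_{\Sigma_t}|\AA|^2\to0$ and $\chi(\Sigma_{t_i})=\chi(\hat\Sigma)$, and then applies H\"older with exponents $3$ and $3/2$ to $\int\mu^{2/3}$ (Lemma \ref{2/3-power}); your H\"older step is the graphical shadow of this, but without the compactification estimates it is not a proof. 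The rigidity discussion is essentially right in outline, but it inherits the same dependence on the unproven monotonicity ingredients.
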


If one replaces $\bar{m}$ by $m$ and removes the  $\bar m\le0$ condition in our theorem,\footnote{Although we are unable to replace $\bar{m}$ by $m$ in general, we do obtain a slightly stronger inequality than (\ref{penrose.inequality}). See Lemma \ref{2/3-power}.}  
one obtains the natural analog of Huisken and Ilmanen's Penrose inequality \cite{Huisken-Ilmanen:2001} in the asymptotically locally hyperbolic setting. Versions of this statement have been conjectured by P. Chru\'{s}ciel and W. Simon \cite[Section VI]{Chrusciel-Simon:2001}, and in the $\gen=0$ case, by X. Wang \cite[Section 1]{Wang:2001}. The graph case of the conjecture has recently been established by L. de Lima and F. Gir\~{a}o  when $m\ge0$ \cite{deLima-Girao:2013}. See also a related volume comparison result by S. Brendle and O. Chodosh~\cite{Brendle-Chodosh:2013}.

\begin{cor}\label{BoundaryPMT}
Let $(M^3,g)$ be a   $C^2$ asymptotically locally hyperbolic manifold with conformal infinity of genus $\gen$.

Assume that $R\geq-6$, $\partial M$ is an outermost minimal surface, and  there is a boundary component $\partial_1 M$ of genus $\gen$.  Then
\begin{align*}
& \bar{m}>0 \quad\text{for $\gen=0$ or $1$,}\\
& \bar{m} > -\frac{1}{3\sqrt{3}}\quad \text{for $\gen>1$}.
\end{align*}
\end{cor}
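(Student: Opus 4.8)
The plan is to deduce Corollary~\ref{BoundaryPMT} from Theorem~\ref{penrose.thm} by a short optimization of the right-hand side of~(\ref{penrose.inequality}), followed by an appeal to the rigidity clause to make the bound strict. Since both conclusions are automatic when $\bar m>0$, one may assume $\bar m\le 0$; then the hypotheses of Theorem~\ref{penrose.thm} hold and~(\ref{penrose.inequality}) applies, with $A>0$ the area of $\partial_1M$. Setting $x=A/(4\pi)>0$, so that $\sqrt{A/(16\pi)}=\tfrac12\sqrt x$, the inequality becomes
\[
\bar m\ \ge\ F(x):=\frac{1}{2\gamma}\sqrt{x}\,\bigl(x-(\gen-1)\bigr).
\]

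When $\gen=0$ one has $\gamma=1$ and $F(x)=\tfrac12\sqrt{x}\,(x+1)>0$; when $\gen=1$ one has $\gamma=1$ and $F(x)=\tfrac12 x^{3/2}>0$. In either case $\bar m\ge F(x)>0$ contradicts $\bar m\le 0$, so in fact $\bar m>0$ when $\gen\le 1$. When $\gen>1$, put $c=\gen-1\ge 1$, so that $\gamma=c^{3/2}$ and $F(x)=\tfrac{1}{2c^{3/2}}\bigl(x^{3/2}-cx^{1/2}\bigr)$; a one-variable computation shows that $F$ has a unique critical point on $(0,\infty)$, at $x=c/3$, which is its global minimum, and that $F(c/3)=-\tfrac{1}{3\sqrt3}$. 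Hence $\bar m\ge F(x)\ge -\tfrac{1}{3\sqrt3}$.

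It remains to exclude equality in the last case, which is the only step that is not purely formal. Suppose $\bar m=-\tfrac{1}{3\sqrt3}$. Then equality must hold in~(\ref{penrose.inequality}), so by the rigidity part of Theorem~\ref{penrose.thm} the manifold $(M,g)$ is isometric to the Kottler space with conformal infinity $(\hat\Sigma,\hat g)$ and mass $\bar m$. Because $\gen>1$, the Gauss--Bonnet theorem forces the constant curvature of $\hat\Sigma$ to be negative, so $\hat k=-1$, and then $\bar m=-\tfrac{1}{3\sqrt3}=m_\text{crit}$ is precisely the critical mass. But, as recalled in the discussion of critical Kottler spaces above, that space is a complete Riemannian manifold with no boundary, hence cannot be isometric to $(M,g)$, whose boundary $\partial M$ is a nonempty outermost minimal surface. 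This contradiction yields $\bar m>-\tfrac{1}{3\sqrt3}$ and completes the proof. The main thing to get right is this final exclusion of equality: the bare inequality from Theorem~\ref{penrose.thm} only gives $\bar m\ge -\tfrac{1}{3\sqrt3}$, and one genuinely needs the structure of the critical Kottler metric — specifically, that it has no minimal surface boundary — to rule out the borderline case.
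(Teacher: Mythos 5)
Your proposal is correct and follows essentially the same route as the paper: assume $\bar m\le 0$, apply Theorem \ref{penrose.thm}, minimize the right-hand side in the area (with minimum $-\tfrac{1}{3\sqrt{3}}$ at $A=\tfrac{4\pi(\gen-1)}{3}$ when $\gen>1$), and rule out equality via the rigidity clause and the structure of the critical Kottler space. The only cosmetic difference is the final contradiction: you note the critical Kottler space has empty boundary, while the paper notes it contains no compact minimal surfaces; both observations are valid and interchangeable here.
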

\begin{proof}
	If $\gen=0$ or $\gen=1$ and $\bar m\leq 0$ we obtain at once from Theorem \ref{penrose.thm} that $|\partial_1 M|=0$, which is a contradiction.
	
	If $\gen>1$, then it is an easily verifiable fact that
	\[\sqrt{\frac{x}{16\pi}}\left(1-\gen+\frac{x}{4\pi}\right)\geq -\frac{(\gen-1)^{3/2}}{3\sqrt{3}}\quad\mbox{for all }x\geq 0.\]
	This inequality and Theorem \ref{penrose.thm} imply at once that $\bar m\geq -\frac{1}{3\sqrt{3}}$. If equality holds then we are in the equality case of Theorem \ref{penrose.thm} and so $M$ must be isometric to a critical Kottler space of mass $ -\frac{1}{3\sqrt{3}}$. But this is impossible because a critical Kottler space has no compact minimal surfaces. (They are foliated by strictly mean convex surfaces.)
\end{proof}
Of course, the $\gen=0$ case of Corollary \ref{BoundaryPMT} is just the positive mass theorem for asymptotically hyperbolic manifolds, proved by Chru\'{s}ciel and M. Herzlich \cite{Chrusciel-Herzlich:2003} and by Wang \cite{Wang:2001}, for the case of a manifold with minimal boundary, proved by V. Bonini and J. Qing \cite{Bonini-Qing:2008}.  
While the assumption on the boundary is not desirable, we note that  there exist examples
 (the AdS solitons due to G. Horowitz and R. Myers \cite{Horowitz-Myers:1999})
 of asymptotically locally hyperbolic manifolds  with $\gen=1$, no boundary, and \emph{negative} mass.

We note that the inverse mean curvature flow technique allows us to a give a new proof of a weakened version of the positive mass theorem for asymptotically hyperbolic manifolds that was mentioned above.
\begin{thm}[Positive $\bar{m}$ theorem for asymptotically hyperbolic manifolds]\label{AHPMT}
Let $(M^3,g)$ be a complete $C^2$ asymptotically hyperbolic\footnote{Here we define this to mean asymptotically locally hyperbolic with infinity equal to the round sphere.} manifold (with or without a minimal boundary) with scalar curvature $R\geq-6$.  Then $\bar{m}\ge0$. Moreover, if $\bar{m}=0$, then $(M,g)$ must be hyperbolic space.
\end{thm}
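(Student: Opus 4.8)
The plan is to deduce Theorem \ref{AHPMT} from the machinery behind Theorem \ref{penrose.thm} by running inverse mean curvature flow from a judiciously chosen small initial surface rather than from the boundary. In the case $\partial M=\emptyset$ (or when we wish to ignore the boundary), the subtlety is that there is no outermost minimal surface to start from, and $A$ in \eqref{penrose.inequality} would naively be $0$, giving only $\bar m\ge 0$ — which is exactly what we want for the inequality, but not enough to force rigidity. So I would proceed as follows. First, I would recall that the monotonicity quantity driving the proof of Theorem \ref{penrose.thm} is a Geroch-type (Hawking-mass-like) functional along weak IMCF, whose limit at infinity is controlled by the mass aspect, and which for the round-sphere conformal infinity ($\hat k=1$, $\gen=0$) specializes to the standard renormalized Hawking mass used by Huisken–Ilmanen and Wang. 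The key structural fact is that this functional is nondecreasing along the flow, is bounded above in the limit by (a constant times) $\bar m$, and takes the value $0$ on round spheres in hyperbolic space.

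Next, for the inequality $\bar m\ge 0$: if $M$ has a minimal boundary I start weak IMCF there exactly as in Theorem \ref{penrose.thm} with $\gen=0$; the $\gamma=1$ normalization and the $(1-\gen+A/4\pi)=A/4\pi$ term make the right-hand side of \eqref{penrose.inequality} nonnegative, so $\bar m\ge 0$ follows immediately. If $M$ has no boundary, I instead start the flow from a small geodesic sphere $S_\varepsilon$ around a point $p\in M$; as $\varepsilon\to 0$ the Geroch functional of $S_\varepsilon$ tends to $0$ (the metric is smooth, hence asymptotically Euclidean on small scales, so small spheres look round and the functional vanishes in the limit), while monotonicity and the asymptotic analysis at infinity give $0\le \lim_{\text{flow}} \le (\text{const})\,\bar m$. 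Hence $\bar m\ge 0$ in all cases.

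For the rigidity statement, suppose $\bar m=0$. Then every Geroch functional along the flow is pinched between $0$ and $0$, so it is \emph{constant} along the flow. Equality in the monotonicity formula — which comes from a Gauss–Bonnet estimate plus a Hölder/Cauchy–Schwarz step on $\int |\nabla H|^2/H^2$ and a term involving $(R+6)$ — forces: (i) $R\equiv -6$, (ii) each flow surface $\Sigma_t$ is a round sphere of constant mean curvature, (iii) $\nabla H\equiv 0$ on each $\Sigma_t$, and (iv) the surfaces are totally umbilic. Moreover the limit being $\bar m=0$ forces the mass aspect function $\mu$ to be identically $0$, not merely of average zero. Feeding the umbilicity and constancy of $H$ back through the Gauss and Codazzi equations, one identifies the ODE satisfied by the metric in the coordinates adapted to the flow, and recognizes it as the hyperbolic metric $dr^2/(1+r^2)+r^2 g_{S^2}$; this is precisely the $\hat k=1$, $m=0$ Kottler space, i.e.\ hyperbolic space. (If $M$ had a minimal boundary, the same rigidity analysis would identify $(M,g)$ with the $\hat k=1$, $\bar m=0$ Kottler space, which is hyperbolic space and has no minimal surfaces, a contradiction; so the boundary case simply cannot achieve equality, consistent with Corollary \ref{BoundaryPMT}.)

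The main obstacle I anticipate is the same one that appears in Huisken–Ilmanen's work: controlling the weak IMCF, in particular (a) ensuring existence of a proper weak solution starting from a small sphere or from the minimal boundary and going off to infinity, with the jump/“fattening’’ behavior handled so that the Geroch functional is still monotone across jumps; and (b) justifying that the functional is continuous and attains the correct limit at spatial infinity given only the $C^2$, $o_2(\rho^{-3})$ decay in Definition \ref{asym.hyper.definition}. The connectedness/genus bookkeeping that complicates Theorem \ref{penrose.thm} is absent here since $\gen=0$, which simplifies matters; the real work is the soft analysis of the flow and the careful extraction of the rigidity conclusions from the equality case, both of which largely parallel the arguments already developed for Theorem \ref{penrose.thm}.
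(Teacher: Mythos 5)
Your overall strategy is the one the paper itself uses: when a compact minimal surface is present, reduce to the Penrose inequality via Corollary \ref{BoundaryPMT}; otherwise run weak IMCF from a point as in Section 8 of Huisken--Ilmanen, use Geroch monotonicity starting from the value $0$ at the point, compare with the limit at infinity, and extract rigidity from the equality case of monotonicity exactly as in Theorem \ref{penrose.thm}. However, there is one genuine logical gap as written: you treat the bound $\lim_{t\to\infty} m_H(\Sigma_t)\le(\mathrm{const})\,\bar m$ as an unconditional ``structural fact.'' It is not: this is Lemma \ref{2/3-power}, whose proof needs the mass aspect to be nonpositive (hence $\bar m\le 0$), and the paper stresses that for positive mass aspect the relevant inequality goes the wrong way --- indeed \cite{Neves:2010} gives well-behaved $\gen=0$ examples where this asymptotic bound fails. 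So your chain $0\le\lim m_H(\Sigma_t)\le C\,\bar m$ cannot be asserted outright; the argument must be run by contradiction (assume $\bar m<0$, so $\mu<0$ everywhere, Lemma \ref{2/3-power} applies, and $0\le C\bar m<0$ is absurd), which is precisely how the paper phrases it. The same remark affects your boundary case: Theorem \ref{penrose.thm} carries the hypothesis $\bar m\le 0$, so it yields $\bar m>0$ only through this contradiction, i.e.\ through Corollary \ref{BoundaryPMT}, rather than ``immediately.'' For the rigidity case $\bar m=0$ the hypothesis of Lemma \ref{2/3-power} is satisfied and your pinching/equality argument matches the paper's.

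A second, smaller point: the dichotomy should be ``$M$ contains a compact minimal surface'' versus ``it contains none,'' not ``minimal boundary'' versus ``no boundary.'' A boundaryless $M$ may still contain compact minimal surfaces; in that case one passes to the exterior region (whose boundary is an outermost minimal surface) and applies Corollary \ref{BoundaryPMT}, while the flow-from-a-point argument is reserved for the case with no compact minimal surfaces, where the exterior-region hypotheses underlying the monotonicity and connectedness lemmas of Section \ref{IMCF} hold automatically.
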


We prove Theorem \ref{penrose.thm} following the inverse mean curvature flow theory developed by Huisken and Ilmanen in \cite{Huisken-Ilmanen:2001}. The general idea is to flow $\partial_1 M$ outward with speed inversely proportional to the mean curvature and obtain a (weak) flow of surfaces $(\Sigma_t)_{t\geq 0}$ (where $\Sigma_0=\partial_1 M$). To each compact surface one considers its \emph{Hawking mass} to be
\begin{equation}\label{HawkingMass}
m_H(\Sigma):=\sqrt{\frac{|\Sigma|}{16\pi}}\left(1-\gen-\frac{1}{16\pi}\int_{\Sigma}(H^2-4)\right),
\end{equation}
where $|\Sigma|$ denotes the area of $\Sigma$, and $H$ is its mean curvature. Observe that our notation $m_H(\Sigma)$ leaves out the dependence on $\gen$. The key property of inverse mean curvature flow is that $m_H(\Sigma_t)$ is non-decreasing in time. Therefore, since $m_H(\Sigma_0)$  coincides with the right-hand side of the inequality in Theorem \ref{penrose.thm}, the desired result follows if 
\[\lim_{t\to\infty}m_{H}(\Sigma_t)\leq \bar{m}\cdot\left(\max\{1, \gen -1\}\right)^{3/2}.\]

In the asymptotically locally hyperbolic setting this inequality is subtle, and in fact the second author constructed  well-behaved examples (with $\gen=0$) where the above inequality does not hold \cite{Neves:2010}. The central observation of this paper is that this inequality holds if  $\bar m\leq 0$. The difference between our result here and the one in \cite{Neves:2010} can be traced to the fact that if the mass aspect is positive, then a desired inequality goes in the wrong direction, whereas for nonpositive mass aspect, it goes in the right direction. See the end of the proof of Lemma \ref{2/3-power} to see the exact place where the condition  $\bar m\leq 0$ is used.

\vskip 0.02in
\noindent{\em Acknowledgements:} The authors would like to thank Piotr Chru\'{s}ciel  and Walter Simon for bringing this problem to our attention and for their interest in this work. We also thank Richard Schoen for some helpful conversations.

\section{An application to static uniqueness}\label{StaticUniqueness}

Chru\'{s}ciel and Simon proved in  \cite{Chrusciel-Simon:2001} that Theorem \ref{penrose.thm} implies a uniqueness theorem for static metrics with negative mass that we now explain.

\begin{defin}
We say that $(M^3,g,V)$ is a \emph{complete vacuum static data set with  cosmological constant $\Lambda=-3$} if and only if $M$ is a smooth manifold (possibly with boundary) equipped with a complete $C^2$ Riemannian metric $g$ and a nonnegative $C^2$ function $V$ such that $\partial M=\{V=0\}$ and
\begin{gather}
\Delta_g V = -\Lambda V \label{static1}\\
\ric(g)=\frac{1}{V}\hess_g V+\Lambda  g.\label{static2}
\end{gather}
\end{defin}

If $(M^3,g,V)$ is a vacuum static data set as above, then the Lorentzian metric $h=-V^2dt^2+g$ on $\rr\times M$ is a solution to Einstein's equations with cosmological constant $\Lambda$. In the $\Lambda=0$ case a 1987 result of  G. Bunting and A. Masood-ul-Alam  \cite{Bunting-Masood:1987} shows that  Schwarzschild spaces are the only asymptotically flat vacuum static data sets (with the case of connected boundary originally proved by H. M\"{u}ller zum Hagen, D. Robinson, and H. Seifert in 1973 \cite{Muller-Robinson-Seifert:1973}).

We are interested in a similar characterization of the Kottler metrics defined in the previous section, which are known to be vacuum static data sets with cosmological constant  $\Lambda=-3$.  A static uniqueness theorem for the hyperbolic space was proved in work of Boucher-Gibbons-Horowitz 
\cite{Boucher-Gibbons-Horowitz:1984}, Qing \cite{Qing:2003}, and Wang \cite{Wang:2005}.

The following definition is equivalent to the one given in \cite[Section III.A]{Chrusciel-Simon:2001}.
\begin{defin}
Let $i\ge 2$. We say that a complete vacuum static data set $(M^3,g,V)$ with cosmological constant $\Lambda=-3$ is \emph{$C^i$ conformally compactifiable} if $g$ and $V$ are $C^i$ and there exists a smooth compact manifold $M'$ with boundary and a $C^{i+1}$ embedding of $M$ into $M'$ such that $M'\isom M\cup\partial_\infty M$,\\
\indent $\bullet$ the function $V^{-1}$ extends to a $C^i$ function on $M'$ with $d(V^{-1})\ne0$ at $\partial_\infty M$, and\\
\indent $\bullet$ the formula $\hat{g}=V^{-2}g$ near $\partial_\infty M$ defines a Riemannian metric.
\end{defin}

If $(M,g,V)$ is a complete vacuum static data set, the fact that \hbox{$\hess_g V=0$} on $\partial M$ implies that $|\nabla V|$ is constant on each component of $\partial M$, and we call this constant the \emph{surface gravity} $\kappa$ of that component.   

The surface gravity is strictly positive for the following reason: Given $p\in \partial M$, let $\gamma$ be the unit speed geodesic that starts at $p$ perpendicular to  $\partial M$ and set $f(t)=V(\gamma(t))$. From the static equation we see the existence of $c>0$ and $t_0>0$ so that
$f''\leq cf$ for all $0\leq t\leq t_0.$ Standard o.d.e.\ comparison shows that if $f'(0)=f(0)=0$ then $f(t)\leq 0$ for all $0\leq t\leq t_0,$ which is impossible because $V$ is strictly positive on the interior of $M$.

The (non-critical) Kottler metrics have exactly one component of $\partial M$ and, for fixed $\hat{k}$, there is a bijection between possible surface gravities in $(0,\infty)$ and possible masses in $(-\frac{1}{3\sqrt{3}},\infty)$ when $\hat{k}=-1$, or in $(0,\infty)$ when $\hat{k}$ is $0$ or $1$. Therefore, for fixed $\hat{k}$ we can define a bijection $m(\kappa)$ according to the fixed relationship between mass and surface gravity for Kottler metrics whose infinities have curvature equal to $\hat{k}$.  For this bijection, when $\hat k=-1$ one has  $\lim_{\kappa\to 0} m(\kappa)=-\frac{1}{3\sqrt{3}}$ and $m(1)=0$  (see \cite[Section II]{Chrusciel-Simon:2001} for details).

We can now state a static uniqueness theorem for Kottler metrics of negative mass, which will follow from Theorem \ref{penrose.thm} combined  with some of the results of \cite{Chrusciel-Simon:2001} that we will describe later.

\begin{thm}[Static uniqueness with nonpositive mass]\label{StaticUniquenessThm}
Let $(M^3,g,V)$ be a complete vacuum static data set with cosmological constant $\Lambda=-3$, and assume that it is $C^5$ conformally compactifiable with conformal infinity $\partial_\infty M$ of constant curvature $\hat{k}=-1$.

Assume that there is a component $\partial_1 M$ of $\partial M$ such that $\partial_1 M$ is homeomorphic to $\partial_\infty M$ and $\partial_1 M$ has the largest surface gravity $\kappa$ of any component.

If $m(\kappa) \le 0$ (or equivalently $\kappa\leq 1$), then $(M, g)$ must be isometric to the Kottler metric with infinity $\partial_\infty M$ and mass $m(\kappa)$, while $V$ is equal to the usual static potential of the Kottler metric, up to a constant multiple.
\end{thm}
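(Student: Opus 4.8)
The plan is to recognize $(M,g,V)$, via the results of Chru\'{s}ciel and Simon, as an instance of the geometry governed by Theorem~\ref{penrose.thm}, and then to play their \emph{reverse} estimate for static data against the Penrose inequality so as to land exactly in the rigidity case.

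First I would record the standard consequences of the static equations and of conformal compactifiability. Tracing \eqref{static2} and using \eqref{static1} gives $R\equiv 2\Lambda=-6$. Since $\hess_g V=0$ along $\partial M=\{V=0\}$, each component of $\partial M$ is totally geodesic, hence minimal; the component $\partial_1 M$, homeomorphic to $\partial_\infty M$, is a closed surface of the same genus $\gen=\operatorname{genus}(\partial_\infty M)\ge 2$. By the Chru\'{s}ciel--Simon analysis of conformally compactifiable static data (Sections~II--III of \cite{Chrusciel-Simon:2001}; the $C^5$ hypothesis is more regularity than needed), $(M,g)$ is $C^2$ asymptotically locally hyperbolic in the sense of Definition~\ref{asym.hyper.definition}, with conformal infinity $(\hat\Sigma,\hat g)=(\partial_\infty M,\hat g)$ of curvature $\hat k=-1$; moreover that analysis shows the mass aspect of such static data is constant (so $m=\bar m$) and that $\{V=0\}$ is an \emph{outermost} minimal surface --- in particular $\partial_1 M$ is not hidden behind another minimal surface. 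Thus all the topological and curvature hypotheses of Theorem~\ref{penrose.thm} hold, and it remains only to secure $\bar m\le 0$.

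The substantive ingredient is the one-sided estimate of Chru\'{s}ciel and Simon for static data: under the present hypotheses --- with $\partial_1 M$ homeomorphic to $\partial_\infty M$ and carrying the largest surface gravity --- one has the \emph{reverse} of \eqref{penrose.inequality}, i.e.\ $\bar m\le\tfrac{1}{\gamma}\sqrt{A/16\pi}\,(1-\gen+A/4\pi)$, with equality precisely for the Kottler metric; and their near-horizon comparison shows that the hypothesis $m(\kappa)\le 0$, equivalently $\kappa\le 1$, is exactly what forces this right-hand side to be $\le 0$ (it corresponds to $A\le 4\pi(\gen-1)=|\hat\Sigma|_{\hat g}$), whence $\bar m\le 0$ and Theorem~\ref{penrose.thm} applies. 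This is the step where $m(\kappa)\le 0$ genuinely enters; it is also where the constancy of the mass aspect is used, since Theorem~\ref{penrose.thm} is an estimate on $\bar m$ and the Chru\'{s}ciel--Simon bound must be quoted in the same $\bar m$ form.

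The argument then closes formally: Theorem~\ref{penrose.thm} gives $\bar m\ge\tfrac{1}{\gamma}\sqrt{A/16\pi}\,(1-\gen+A/4\pi)$ while the Chru\'{s}ciel--Simon estimate gives the reverse, so \eqref{penrose.inequality} holds with equality and the rigidity clause of Theorem~\ref{penrose.thm} identifies $(M,g)$ with the Kottler space of conformal infinity $(\partial_\infty M,\hat g)$ and mass $\bar m$. Finally I would recover $V$ and name the mass: the static potential compatible with a fixed Kottler metric is unique up to a positive multiplicative constant (the linear system $\hess_g V=V(\ric(g)+3g)$, $\Delta_g V=3V$, $\{V=0\}=\partial M$ has a one-dimensional solution space), so $V$ is the standard Kottler potential up to a constant; the surface gravity $\kappa$ is then the surface gravity of that Kottler metric, and the bijection $\kappa\mapsto m(\kappa)$ of \cite{Chrusciel-Simon:2001} forces $\bar m=m(\kappa)$. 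I expect the only real obstacle to be the importation of the two facts from \cite{Chrusciel-Simon:2001} --- that a $C^5$ conformally compactifiable static data set with $\hat k=-1$ really fits Definition~\ref{asym.hyper.definition} with $\{V=0\}$ outermost minimal and constant mass aspect, and the reverse estimate together with the implication $m(\kappa)\le 0\Rightarrow\bar m\le 0$; once these are in hand the combination with Theorem~\ref{penrose.thm} is immediate.
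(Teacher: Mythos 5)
Your overall strategy coincides with the paper's: use the Chru\'{s}ciel--Simon asymptotics and the outermost-minimal-surface property to put $(M,g)$ under the hypotheses of Theorem \ref{penrose.thm}, play a Chru\'{s}ciel--Simon comparison against the Penrose inequality to force equality, and then invoke the rigidity clause plus uniqueness of the static potential. However, the middle step, as you state it, has a genuine gap. What Chru\'{s}ciel--Simon's Theorem I.5 (Theorem \ref{CSTheorem} here) provides is a comparison with the \emph{reference} Kottler solution of the same surface gravity: $\bar m \le m_0:=m(\kappa)$ and $\chi(\partial_1 M)/|\partial_1 M|\ge \chi(\partial M_0)/|\partial M_0|$, i.e.\ $A\ge A_0$ when the genera agree. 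It does not hand you your ``reverse Penrose inequality'' $\bar m\le \frac{1}{\gamma}\sqrt{A/16\pi}\,(1-\gen+A/4\pi)$ involving the \emph{actual} horizon area $A$; extracting that from the two reference comparisons is precisely the nontrivial content of the paper's proof: with $\mathfrak{r}=\sqrt{A/4\pi(\gen-1)}$ and $r_0=\sqrt{A_0/4\pi(\gen-1)}$ one uses that $r_0$ is the \emph{largest} root of $2m_0+r-r^3=0$, hence $r_0\ge 1/\sqrt{3}$, so that $r\mapsto r-r^3$ is nonincreasing beyond $r_0$, and then $\mathfrak{r}\ge r_0$ together with $\bar m\le m_0$ yields $2\bar m+\mathfrak{r}-\mathfrak{r}^3\le 0$. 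You black-box exactly this step by attributing its conclusion to \cite{Chrusciel-Simon:2001}.

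Moreover, your route to the hypothesis $\bar m\le 0$ of Theorem \ref{penrose.thm} is wrong as written: you assert that $m(\kappa)\le 0$ ``corresponds to $A\le 4\pi(\gen-1)$,'' but $m_0\le 0$ only gives $A_0\le 4\pi(\gen-1)$ for the reference horizon, and the Chru\'{s}ciel--Simon area comparison goes the \emph{other} way, $A\ge A_0$; an upper bound on $A$ is only available a posteriori, from rigidity. The correct (and immediate) route, which the paper takes, is $\bar m\le m_0=m(\kappa)\le 0$ directly from Theorem \ref{CSTheorem}. Two smaller points: your claim that the mass aspect of such static data is constant is neither established by what you cite nor needed, since the Chru\'{s}ciel--Simon bound is already phrased for $\bar m$; and the outermost property of $\{V=0\}$ is not taken from \cite{Chrusciel-Simon:2001} but is Lemma \ref{outermost} of this paper (proved by flowing the boundary of the exterior region with speed $V$ and using the strictly outward minimizing property). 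With Theorem \ref{CSTheorem} quoted in its actual form and the cubic comparison supplied, your argument closes exactly as the paper's does.
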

Before we present the proof some comments are in order.
\begin{itemize}

\item It follows from Lemma \ref{outermost} and the proof of Lemma \ref{topology} that $\partial_1 M$ can never have larger genus than $\partial_\infty M$. It would be nice to also rule out the possibility that $\partial_1 M$ has strictly smaller genus.

\item The Horowitz-Myers AdS solitons  \cite{Horowitz-Myers:1999} described in the Introduction do not only have
 have negative mass, no boundary, and $\hat k=0$, but they are also \emph{static}. A static uniqueness theorem for these examples was proved by G. Galloway, S. Surya, and E. Woolgar \cite{Galloway-Surya-Woolgar:2003}. 

\item It would be interesting to remove the condition on $m(\kappa)$.

\end{itemize}

\subsection{Proof of Theorem \ref{StaticUniquenessThm}}

The following proposition is a consequence of Theorem I.1 and Proposition III.7 of \cite{Chrusciel-Simon:2001}, together with a coordinate change.

\begin{prop}\label{StaticAsymptotics}
Let $i\ge 3$. Let $(M^3,g,V)$ be a $C^i$ conformally compactifiable complete vacuum static data set with cosmological constant $\Lambda=-3$. Further assume that the induced metric $\hat{g}$ on $\partial_\infty M$ (as defined in the previous section) has locally constant Gauss curvature $\hat{k}$ equal to  $1$, $0$, or $-1$. 

Then 
$\partial_\infty M$ is connected, $(M,g)$ is asymptotically locally hyperbolic (as defined in the Introduction) with conformal infinity $(\partial_\infty M, \hat{g})$, and  
\[V^2=\rho^2+\hat{k} -\frac{4\mu}{\rho}+o_1(\rho^{-1}),\]
where $\rho$ is the coordinate used in the definition of asymptotically locally hyperbolic, and $\mu$ is the mass aspect.
\end{prop}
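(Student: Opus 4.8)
The plan is to derive the proposition from the asymptotic analysis of Chru\'{s}ciel and Simon, together with an explicit change of radial variable --- this is exactly what is meant by the phrase ``Theorem~I.1 and Proposition~III.7 of \cite{Chrusciel-Simon:2001}, together with a coordinate change'' announced just above. The first ingredient is Theorem~I.1 of \cite{Chrusciel-Simon:2001}: for a $C^i$ conformally compactifiable static vacuum data set with $\Lambda=-3$ it furnishes, in a collar of $\partial_\infty M$, a gauge built from a defining function $x$ adapted to $V$ (so that $V^{-1}=x+O(x^2)$ and $x^2g$ extends $C^i$-regularly with $x^2g|_{\{x=0\}}=\hat g$), together with asymptotic expansions of $g$ and $V$ in powers of $x$ controlled to the order we need; because we assume that $\hat g$ has constant Gauss curvature $\hat k$, the model metric $b=(\hat k+\rho^2)^{-1}d\rho^2+\rho^2\hat g$ has constant sectional curvature $-1$, and by Theorem~I.1 no logarithmic terms obstruct the expansion through the relevant order, so its subleading coefficients are clean enough to yield a well-defined mass aspect. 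The second ingredient is the one genuinely global input: the cited results (in particular Proposition~III.7 of \cite{Chrusciel-Simon:2001}) give that $\partial_\infty M$ is connected, and we simply quote this. Granting these, what remains is to pass from the $x$-gauge to the normal form of Definition~\ref{asym.hyper.definition} and to re-expand $V^2$ in $\rho=1/x$.

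For the coordinate change, first note that with $\Lambda=-3$ fixed and $\hat g$ of constant curvature, that curvature is automatically $1$, $0$, or $-1$, so only the area normalization $|\hat\Sigma|_{\hat g}=4\pi$ in the flat case needs to be arranged. Next, define a new radial variable $\rho$ equal to $1/x$ to leading order and corrected by a term of order $\rho^{-2}$, chosen so that the $dx^2$-part of $g$ becomes exactly $(\hat k+\rho^2)^{-1}d\rho^2$; concretely this amounts to integrating an ODE in $x$ whose coefficients are read off from the $x$-expansion of $g_{xx}$, and the leading/indicial analysis --- using again that $\hat k$ is constant --- shows the correction is indeed $O(\rho^{-2})$ and that the residual error in the $d\rho^2$-coefficient decays like $o(\rho^{-5})$, hence contributes only an $o_1(\rho^{-3})$ term to $Q$. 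In this gauge the angular part of $g$ becomes $\rho^2\hat g+\rho^{-1}h+Q_{\mathrm{ang}}$ with $Q_{\mathrm{ang}}=o_1(\rho^{-3})$, where the symmetric two-tensor $h$ on $\hat\Sigma$ is the coefficient of the appropriate subleading power of $x$ in $x^2g$; by the regularity in Theorem~I.1 its trace converges in $C^i$, so $\mu=\lim_{\rho\to\infty}\tfrac34\tr_{\hat g}h$ exists and is $C^i$, which already establishes that $(M,g)$ is asymptotically locally hyperbolic with conformal infinity $(\partial_\infty M,\hat g)$. Finally, inverting $V^{-1}=x+O(x^2)$ and substituting the relation between $x$ and $\rho$ gives $V^2=\rho^2+\hat k+c\,\rho^{-1}+o_1(\rho^{-1})$ for some $C^i$ function $c$ on $\hat\Sigma$, and the static equation~(\ref{static2}) --- which forces $\ric(g)$, computed from the ALH expansion of $g$ and hence expressible through $\hat g$ and $h$, to agree with $V^{-1}\hess_g V-3g$ at the first nontrivial order in $\rho^{-1}$ --- ties $c$ to $\tr_{\hat g}h$, equivalently to $\mu$, by a fixed linear relation; unwinding it gives the displayed formula for $V^2$. (The Remark following the definition of the Kottler spaces carries out the analogous substitution for those metrics and serves as a template.)

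The substance of the argument --- and the only genuine difficulty --- is the error bookkeeping in the construction of $\rho$: one must check that the $o_i$-decay of the various remainders is preserved under the nonlinear substitution relating $\rho$ and $x$ and survives one further differentiation, since the conclusion asks for $o_1$-control while a naive first pass controls only a remainder and not its derivative; this is why Theorem~I.1 is invoked with $i\ge 3$, leaving a comfortable margin. Apart from this, every step is either quoted directly from \cite{Chrusciel-Simon:2001} (the connectedness of $\partial_\infty M$; the existence, regularity, and $x$-expansion of the conformal completion) or is the explicit local computation above, so the proposition needs no analytic input beyond \cite{Chrusciel-Simon:2001}.
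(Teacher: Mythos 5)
Your proposal is correct and takes essentially the same approach as the paper, which itself offers no proof beyond attributing the proposition to Theorem I.1 and Proposition III.7 of Chru\'{s}ciel--Simon together with a coordinate change --- precisely the citation-plus-gauge-change argument you flesh out. One small remark: when you pin down the ``fixed linear relation'' for the subleading coefficient, it should come out as $\tfrac{4}{3}\mu\rho^{-1}$ (consistent with the Kottler model, where $h=\tfrac{2}{3}m\hat g$ and $\mu=m$, and with the expansion actually used in the proof of Theorem \ref{CSTheorem}), so the $4\mu/\rho$ in the printed statement appears to be a typo rather than a defect in your argument.
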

In particular, static data sets with $\Lambda=-3$ have a well-defined mass $m$ and $\bar m$.

The key theorem of \cite{Chrusciel-Simon:2001} for the purposes of this article is Theorem I.5:
\begin{thm}[Chru\'{s}ciel-Simon]\label{CSTheorem}
Let $(M^3,g,V)$ be a $C^3$ conformally compactifiable complete vacuum static data set with cosmological constant $\Lambda=-3$, conformal infinity $(\partial_\infty M,\hat g)$ of constant curvature $\hat{k}=-1$, and $\partial M\ne\emptyset$.

Let $\partial_1 M$ denote the boundary component with the largest surface gravity~$\kappa$ and suppose $m_0:=m(\kappa)\le 0$ (i.e., $0<\kappa\leq 1$). 

If $(M_0, g_0, V_0)$ denotes the Kottler space with infinity $(\partial_\infty M, \hat{g})$ and mass $m_0$ (\textit{i.e.}, the one with surface gravity $\kappa$), then
\[ \frac{\chi(\partial_1 M)}{|\partial_1 M|} \ge \frac{\chi(\partial M_0)}{|\partial M_0|_{g_0} }\quad\mbox{and}\quad \bar{m} \le m_0,\]
where $|\partial M_0|_{g_0}$ is the area  with respect to $g_0$.
\end{thm}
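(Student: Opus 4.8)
The plan is to use the level sets of the static potential $V$ as a foliation along which a suitable mass functional is monotone, the Kottler spaces being the model case in which everything is explicit. I would first extract from the static equations the facts that make $V$ a good flow function. Tracing \eqref{static2} and using \eqref{static1} gives $R=-6$. On each component of $\partial M=\{V=0\}$ the hypothesis $\hess_g V=0$ forces the component to be totally geodesic, with $|\nabla V|$ equal to its constant surface gravity $\kappa_i$; and since $V>0$ with $\Delta_g V=3V>0$ in the interior, a maximum-principle/unique-continuation argument should rule out interior critical points, so that $\Sigma_t:=\{V=t\}$ is a smooth foliation running from the horizons ($t=0$) out to $\partial_\infty M$ ($t\to\infty$).

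The analytic engine is a Bochner-type identity. Setting $u:=|\nabla V|^2-V^2$ and writing $\hess_g V=V g+\mathring{\hess}_g V$ (its trace being $\Delta_g V=3V$), the static equations give, on $\{V>0\}$,
\[ \Div\!\left(\tfrac1V\,\nabla u\right)=\tfrac{2}{V}\,\bigl|\mathring{\hess}_g V\bigr|^2\ \ge\ 0 . \]
Thus $u$ is a subsolution of $L:=\Delta_g-\tfrac1V\langle\nabla V,\nabla\,\cdot\,\rangle$; on each horizon $u=\kappa_i^2$, and by Proposition~\ref{StaticAsymptotics} one has $u\to-\hat k=1$ at infinity with a $\rho^{-1}$--coefficient equal to a positive multiple of the mass aspect $\mu$. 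Since $\partial_1 M$ has the largest surface gravity and $\kappa\le1$, the maximum principle gives $u\le1$, and comparing the $\rho^{-1}$ terms already yields $\mu\le0$ pointwise, hence $\bar m\le0$. The same computation also produces the mean curvature of the leaves, $H_t=-\tfrac{V}{|\nabla V|}\,\ric(\nu,\nu)$, which together with the Gauss equation $\ric(\nu,\nu)=-3-K$ on the totally geodesic horizon and Gauss--Bonnet $\int_{\partial_1 M}K=2\pi\chi(\partial_1 M)$ gives, since $H\to0$ at the horizon,
\[ m_H(\partial_1 M)=\sqrt{\tfrac{|\partial_1 M|}{16\pi}}\Bigl(1-\gen+\tfrac{|\partial_1 M|}{4\pi}\Bigr), \]
exactly the boundary value appearing in \eqref{penrose.inequality}.

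I would then drive both conclusions by monotonicity. The key point --- and the sharp mechanism --- is that the Hawking mass \eqref{HawkingMass} is constant ($=m_0(\gen-1)^{3/2}$) along the level sets of the Kottler potential, so I would aim to show that along the $V$--foliation of a general solution $m_H(\Sigma_t)$ is monotone with equality iff the solution is Kottler, the error term being controlled by $\mathring{\hess}_g V$ through the identity above and $R=-6$. Feeding in the horizon value and the limit at $\partial_\infty M$ computed from Proposition~\ref{StaticAsymptotics} then gives the mass inequality $\bar m\le m_0$, the sign hypothesis $\kappa\le1$ (equivalently $m_0\le0$) being exactly what makes the boundary term at infinity enter with the favourable sign. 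The area--topology inequality $\chi(\partial_1 M)/|\partial_1 M|\ge\chi(\partial M_0)/|\partial M_0|_{g_0}$ I would obtain from the horizon itself: it is a two--sided totally geodesic, hence minimal, surface whose stability follows from the static structure, and testing the stability inequality with the constant function together with $\ric(\nu,\nu)=-3-K$ and Gauss--Bonnet already yields $3|\partial_1 M|\ge-2\pi\chi(\partial_1 M)$; the sharp $\kappa$--dependent form then follows by comparison with the Kottler horizon of the same surface gravity, using the explicit relation between $\kappa$, the horizon radius $r_0$ and $m_0$.

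The hard part will be the monotonicity step, because the $V$--foliation is not inverse mean curvature flow (the leaves move with speed $|\nabla V|^{-1}$, not $H^{-1}$), so the monotonicity of $m_H(\Sigma_t)$ is not automatic and must be extracted from the static equations; getting the correct sign and identifying Kottler --- rather than merely hyperbolic space, which is all the identity for $u$ detects --- as the equality case is the crux. Closely related is the need to upgrade the crude bounds $\bar m\le0$ and $3|\partial_1 M|\ge-2\pi\chi(\partial_1 M)$ to the sharp model values $m_0$ and $|\partial M_0|_{g_0}$, which forces a genuine comparison with the Kottler solution of surface gravity $\kappa$ and is where $\kappa\le1$ is essential. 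Finally I would check the regularity of the foliation and the limits at $\partial_\infty M$ against the expansion in Proposition~\ref{StaticAsymptotics}, and use the largest--surface--gravity hypothesis to control the other boundary components through the maximum principle for $u$. Equality in either inequality should then force the solution onto the Kottler space of mass $m_0$.
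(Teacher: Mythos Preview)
Your identity $\Div\bigl(\tfrac{1}{V}\nabla u\bigr)=\tfrac{2}{V}|\mathring{\hess}_g V|^2$ for $u=|\nabla V|^2-V^2$ is correct, and with the maximum principle and the asymptotic expansion it does give $u\le1$ and hence $\bar m\le0$. But this is where your proposal stalls: comparing $W:=|\nabla V|^2$ with $V^2$ is comparing with the Kottler model of mass \emph{zero}, not of mass $m_0$, and your proposed bridge to the sharp bounds --- Hawking-mass monotonicity along the $V$-foliation --- is speculative. The $V$-level sets do not move by inverse mean curvature, you have not identified what would make the Geroch computation close up in the static setting, and the horizon stability inequality likewise only yields the crude $|\partial_1 M|\ge\tfrac{4\pi}{3}(\gen-1)$, missing the reference area $|\partial M_0|_{g_0}$.

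The missing idea, and the paper's actual argument following Chru\'sciel--Simon, is to build the reference Kottler of surface gravity $\kappa$ directly into the comparison function: let $\omega$ be the single-variable function satisfying $|\nabla V_0|^2=\omega(V_0)$ on the reference space, and set $W_0:=\omega(V)$ on $M$. A Bochner computation for $W-W_0$ then gives an elliptic inequality
\[
\Delta(W-W_0)+\langle\xi,\nabla(W-W_0)\rangle+\alpha(W-W_0)\ge0
\]
whose zero-order coefficient $\alpha$ is a positive multiple of $-m_0$; this is exactly where $m_0\le0$ enters, making $\alpha\ge0$ so that the maximum principle yields $W\le W_0$ globally (the boundary values match by the choice of surface gravity, and the two functions agree at infinity). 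Both sharp conclusions then follow immediately from this single inequality: a second-order Taylor expansion off $\partial_1 M$ identifies the quadratic coefficients of $W$ and $W_0$ as $-2K_{\partial_1 M}$ and $-2K_{\partial M_0}$, so $K_{\partial_1 M}\ge K_{\partial M_0}$ pointwise and Gauss--Bonnet gives the area--topology inequality; comparing the $\rho^{-1}$ terms in the asymptotic expansions of $W$ and $W_0$ gives $\mu\le m_0$ pointwise, hence $\bar m\le m_0$. No Hawking-mass monotonicity is needed, nor do you need to rule out interior critical points of $V$ (at a putative interior maximum of $W-W_0$ one has $W>W_0>0$, so the elliptic inequality is valid there).
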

In the case where $\partial_1 M$ has the same genus as $\partial_\infty M$, this theorem provides a simple comparison between the masses and boundary areas of a vacuum static data set and its so-called \emph{reference solution} $(M_0, g_0, V_0)$. For the sake of completeness we provide the proof of this theorem in Section \ref{AreaMassInequalities}.

\begin{lem}\label{outermost}
Let $(M^3,g,V)$ be a $C^3$ asymptotically locally hyperbolic, complete vacuum static data set with cosmological constant $\Lambda=-3$. If $\partial M\ne\emptyset$, then $\partial M$ is an outermost minimal surface. In fact, there are no compact minimal surfaces in the interior of $M$.
\end{lem}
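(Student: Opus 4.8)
The first assertion reduces to the second, so the plan is to prove that $\mathrm{int}(M)$ contains no closed minimal surface: any compact minimal surface separating $\partial M$ from infinity lies in $\mathrm{int}(M)$, and $\partial M$ itself is minimal because it is totally geodesic. The latter is elementary: $\partial M=\{V=0\}$ is a regular level set of $V$ (recall $|\nabla V|=\kappa>0$ there), and for a vacuum static data set $\hess_g V$ vanishes along $\partial M$, so the second fundamental form of $\partial M$, which is proportional to $\hess_g V|_{T\partial M}$, vanishes.

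The engine of the argument is a pointwise identity coming from the static equations, which I would write as $\Delta_g V=3V$ and $\hess_g V=V(\ric(g)+3g)$ (and, after tracing, $R\equiv-6$). Let $\Sigma\subset\mathrm{int}(M)$ be a closed minimal surface with unit normal $\nu$. Using $\Delta_\Sigma(V|_\Sigma)=\Delta_g V-\hess_g V(\nu,\nu)-H_\Sigma\,\partial_\nu V$ with $H_\Sigma=0$ gives $\Delta_\Sigma V=-V\,\ric(g)(\nu,\nu)$, equivalently $L_\Sigma V=|A|^2V$, where $A$ is the second fundamental form of $\Sigma$ and $L_\Sigma=\Delta_\Sigma+|A|^2+\ric(g)(\nu,\nu)$ is the Jacobi operator. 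Since $V>0$ on $\Sigma$, pairing this identity with the positive principal eigenfunction $\psi$ of $-L_\Sigma$ and integrating by parts gives $\int_\Sigma\psi|A|^2V=-\lambda_1(-L_\Sigma)\int_\Sigma V\psi$, whence $\lambda_1(-L_\Sigma)\le0$, with equality only if $A\equiv0$ and $V|_\Sigma$ is a positive Jacobi field.

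Now suppose for contradiction that $\mathrm{int}(M)$ contains a closed minimal surface. Standard minimal-surface theory (the minimizing-hull/trapped-region construction of Huisken--Ilmanen, using the ALH asymptotics to confine the trapped region to a compact set) then produces a closed embedded minimal surface $\Sigma^*\subset\mathrm{int}(M)$ which is outermost among such surfaces; being outer-area-minimizing, $\Sigma^*$ is stable, so $\lambda_1(-L_{\Sigma^*})\ge0$. Together with the previous paragraph this forces $\lambda_1(-L_{\Sigma^*})=0$, so $\Sigma^*$ is totally geodesic and carries the positive Jacobi field $V|_{\Sigma^*}$. The final step is to promote this infinitesimal degeneracy to a local rigidity statement: using that $\Sigma^*$ is totally geodesic, that $R\equiv-6$, and that the Jacobi field is the restriction of the globally defined static potential $V$, one shows that a neighborhood of $\Sigma^*$ is foliated by minimal surfaces. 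Such a foliation contains minimal surfaces lying strictly outside $\Sigma^*$, contradicting that $\Sigma^*$ is outermost.

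The main obstacle is precisely this last step: ruling out the borderline case $\lambda_1(-L_{\Sigma^*})=0$, that is, showing that the second-order obstruction to extending the Jacobi field $V|_{\Sigma^*}$ to a genuine one-parameter family of minimal surfaces vanishes by virtue of the static equations. Everything before it is a routine combination of the static identities with the Gauss equation and the stability inequality.
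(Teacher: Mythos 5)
Your reduction to the interior statement, the observation that $\partial M$ is totally geodesic, and the identity $L_\Sigma V=|A|^2V$ on an interior closed minimal surface are all fine, and the eigenfunction pairing correctly forces $\lambda_1(-L_{\Sigma^*})\le 0$. But the proof does not close: everything hinges on the borderline case $\lambda_1(-L_{\Sigma^*})=0$, where you assert that "one shows that a neighborhood of $\Sigma^*$ is foliated by minimal surfaces" and you yourself flag this as the main obstacle. As written this is a genuine gap, not a routine step: the existence of a positive Jacobi field (even one that is the restriction of a global function) does not by itself integrate to a one-parameter family of \emph{minimal} surfaces, and the usual rigidity arguments of Cai--Galloway/Bray--Brendle--Neves type require a separate deformation-to-CMC-foliation construction plus a curvature argument that you have not supplied. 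So the contradiction with outermostness is never actually reached.

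The paper's proof shows how the static structure lets you bypass all of this. Working in the exterior region $M'$, whose minimal boundary $\partial M'$ is \emph{strictly} outward minimizing, one flows $\partial M'$ outward with speed $V$ and computes, using $\Delta_g V=3V$ and $\ric=\tfrac1V\hess V-3g$, that
\begin{equation*}
\frac{\partial H}{\partial t}=-\Delta_{\Sigma_t}V-\bigl(\ric(\nu,\nu)+|A_{\Sigma_t}|^2\bigr)V=-\langle H,\nabla V\rangle-|A_{\Sigma_t}|^2V,
\end{equation*}
so starting from $H=0$ one gets $H\le 0$ for small time; by the first variation of area the flowed surfaces have area $\le |\partial M'|$, contradicting strict outward minimality (the flow is nontrivial because $V$ cannot vanish identically on $\partial M'$ if an interior minimal surface exists). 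Note that this flow does \emph{not} produce minimal surfaces, only $H\le 0$ surfaces, which is exactly why the foliation statement you hoped for is the wrong target; the correct mechanism is the area comparison against a strictly outward minimizing surface. If you replace your final paragraph by this computation (applied to $\Sigma^*$ or directly to $\partial M'$), the Jacobi-operator and stability machinery in your second and third paragraphs becomes unnecessary.
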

\begin{proof}
First note that the static equations imply that $\partial M$ is totally geodesic, so we need only show that there are no other compact minimal surfaces. Consider the trapped region $K$ of $M$, which is the union of all compact minimal surfaces in $M$, together with all regions of $M$ that are bounded by these minimal surfaces. The boundary of the trapped region, $\partial K$, must itself be a smooth compact minimal surface. (See the proof of Lemma 4.1(i) of \cite{Huisken-Ilmanen:2001}.) Following \cite{Huisken-Ilmanen:2001}, we define the \emph{exterior region} $M'$ of $M$ to be the metric completion of $M\smallsetminus K$. Thus $(M',g, V)$ is a vacuum static data set, except for the requirement that $\{x\in M'\,|\,V(x)=0\}=\partial M'$. The exterior region $M'$ has a strictly outward minimizing minimal boundary and no interior compact minimal surfaces (see \cite{Huisken-Ilmanen:2001}), where \emph{strictly outward minimizing} means that $|\partial M'|$ is strictly less than the area of any other surface that encloses it. 

Suppose that $M$ has a compact minimal surface other than $\partial M$. Since $V>0$ away from $\partial M$, it follows from the definition of $M'$ that $V$ does not vanish identically on $\partial M'$.
We consider the  outward normal flow of surfaces $(\Sigma_t)_{t\ge0}$  with initial condition $\Sigma_0=\partial M'$ that flows with speed $V$. Since $V\ge0$ does not vanish on $\partial M'$, this flow is nontrivial.
According to the formula for the variation of mean curvature, we see that the mean curvature of $\Sigma_t$ evolves according to 
\begin{align*}
\frac{\partial H}{\partial t} &= -\Delta_{\Sigma_t} V -(\ric(\nu,\nu)+|A_{\Sigma_t}|^2)V \\
&= -(\Delta_{g} V - \nabla_\nu\nabla_\nu V + \langle H, \nabla V\rangle) -(\nabla_\nu\nabla_\nu V-3V +|A_{\Sigma_t}|^2 V) \\
&=  -\langle H, \nabla V\rangle -|A_{\Sigma_t}|^2 V, 
\end{align*}
where $\nu$ is the outward unit normal, and $A_{\Sigma_t}$ is the second fundamental form. Since $V\ge 0$, it follows that $\Sigma_t$ must have $H\le0$ for all small $t$.
By the first variation of area formula, $\Sigma_t$ must have area less than or equal to that of $\Sigma_0=\partial M'$. But this contradicts the strictly outward minimizing property of $\partial M'$. 
\end{proof}

We can now prove Theorem \ref{StaticUniquenessThm} following the description in \cite{Chrusciel-Simon:2001}.

Let $(M, g, V)$ be as in the statement of the theorem. In particular, all of the hypotheses of Chru\'{s}ciel-Simon's Theorem (Theorem \ref{CSTheorem}) are satisfied and so $\bar m\leq 0$. Hence  Proposition \ref{StaticAsymptotics} and Lemma \ref{outermost} imply that all of the hypotheses of our Penrose inequality (Theorem \ref{penrose.thm}) are also satisfied.

Since we are assuming that $\partial_1 M$ is homeomorphic to $\partial_\infty M$, Theorem \ref{CSTheorem}  tells us that
\[ A \ge A_0\quad\mbox{and}\quad\bar{m}\le m_0,\]
where $A$ is the area of $\partial_1M$ and $A_0$ is the area of $\partial M_0$ in the reference solution. Moreover, since the curvature $\hat{k}$ of $\hat{g}$ is equal to $-1$ our Penrose inequality (Theorem \ref{penrose.thm}) tells us that 
\begin{align}
\bar{m}(\gen -1)^{3/2}
& \ge \sqrt{\frac{A}{16\pi}}\left(1-\gen+\frac{A}{4\pi}\right).\label{mbar}
\end{align}
It is convenient to define constants 
$$
\mathfrak{r}:=\sqrt{\frac{A}{4\pi(\gen-1)}}\quad\mbox{and}\quad
r_0:=\sqrt{\frac{A_0}{4\pi(\gen-1)}},
$$
so that we have $\mathfrak{r}\ge r_0.$

Inequality (\ref{mbar}) then becomes
$$
\bar{m} \ge \frac{1}{2}\mathfrak{r}(- 1 + \mathfrak{r}^2) \implies
2\bar{m} + \mathfrak{r}-\mathfrak{r}^3\ge0.
$$
Meanwhile, on the reference space we know that $r_0$ is the largest root of 
\[2m_0 +r_0-r_0^3=0. \]
and so $r_0\ge \frac{1}{\sqrt{3}}$ using elementary reasoning. Thus
\begin{align*}
0\le 2\bar{m} + \mathfrak{r}-\mathfrak{r}^3
& =(2\bar{m} + \mathfrak{r}-\mathfrak{r}^3) - (2m_0 +r_0-r_0^3)\\
& = 2(\bar{m}-m_0) + (\mathfrak{r}-r_0) - (\mathfrak{r}^3-r_0^3)\\
&= 2(\bar{m}-m_0) + (\mathfrak{r}-r_0)[1 - (\mathfrak{r}^2+\mathfrak{r}r_0+r_0^2)]\\
& \le 2(\bar{m}-m_0) + (\mathfrak{r}-r_0)(1 - 3r_0^2)
\le 0.
\end{align*}
Therefore all of the inequalities must be equalities and so it follows from the rigidity part of Theorem \ref{penrose.thm} that $(M,g)$ is isometric to the Kottler space with infinity $\partial_\infty M$ and mass $m_0$.

It is simple to check that any two static potentials on the Kottler space $(M,g)$ must be proportional and so $V$ is the usual static potential, up to a constant multiple.

\section{Proof of Theorem \ref{penrose.thm}}\label{IMCF}

Assume that $(M^3,g)$ be a   $C^2$ asymptotically locally hyperbolic manifold with $R\geq-6$ such that $\partial M$ is an outermost minimal surface. In order to establish existence of the weak inverse mean curvature flow, we first need to find a weak subsolution.

\begin{lem}\label{weak.subsolution} Let $(M^3,g)$ be a $C^2$ asymptotically locally hyperbolic metric with radial coordinate $\rho$ as in the definition of asymptotically locally hyperbolic.
There exists $r_0$ so that for all $r\geq r_0$,
$$\Sigma^-_t=\{\rho=(r+1)e^{t/2}-1\}\quad\mbox{and}\quad \Sigma^+_t=\{\rho=(r-1)e^{t/2}+1\}, \quad t\geq 0 $$
are, respectively, subsolutions and supersolutions for inverse mean curvature flow with initial condition $\{\rho=r\}.$
\end{lem}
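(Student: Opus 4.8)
The plan is to verify directly the differential inequalities defining classical sub- and supersolutions of inverse mean curvature flow for the given families of coordinate spheres, after reducing those inequalities to a single scalar asymptotic estimate.

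First I would set up the reduction. Writing a family of coordinate spheres $\Sigma_t=\{\rho=\phi(t)\}$, with $\phi$ smooth and increasing, as the level sets $\{u=t\}$ of $u=\phi^{-1}\circ\rho$, the inverse mean curvature flow equation $\operatorname{div}_g\bigl(\nabla u/|\nabla u|_g\bigr)=|\nabla u|_g$ is equivalent, at a point where $\rho=\phi(t)$, to
\[
\phi'(t)=\frac{|\nabla\rho|_g}{H},
\]
where $H>0$ is the mean curvature there of the coordinate sphere through the point, taken with respect to the outward unit normal $\nabla\rho/|\nabla\rho|_g$; a subsolution is a family for which $``="$ is replaced by $``\ge"$ at every point of $\Sigma_t$ (the surfaces flow at least as fast, i.e.\ its arrival-time function is $\le u$), and a supersolution one with the reverse inequality. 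Since $\phi_-(t)=(r+1)e^{t/2}-1$ and $\phi_+(t)=(r-1)e^{t/2}+1$ satisfy $\phi_\pm(0)=r$ and, along their own graphs,
\[
\phi_-'(t)=\tfrac12\rho+\tfrac12,\qquad \phi_+'(t)=\tfrac12\rho-\tfrac12\qquad(\rho=\phi_\pm(t)),
\]
the lemma reduces to showing that
\[
\Bigl|\frac{|\nabla\rho|_g}{H}-\frac{\rho}{2}\Bigr|\to 0\qquad\text{as }\rho\to\infty,
\]
uniformly over $\hat\Sigma$. Granting this, I choose $r_0>1$ so large that this quantity is $<\tfrac12$ (and $H>0$) on every coordinate sphere with $\rho\ge r_0$; then for $r\ge r_0$ the monotonicity of $\phi_\pm$ keeps $\rho\ge r_0$ along both flows, so $\phi_-'(t)=\tfrac\rho2+\tfrac12>|\nabla\rho|_g/H$ and $\phi_+'(t)=\tfrac\rho2-\tfrac12<|\nabla\rho|_g/H$, which is exactly what is needed.

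Next I would prove the scalar estimate. In the model metric $b=(\hat k+\rho^2)^{-1}d\rho^2+\rho^2\hat g$ one has $|\nabla\rho|_b=\sqrt{\hat k+\rho^2}$, and its coordinate spheres are umbilic with principal curvature $\sqrt{\hat k+\rho^2}/\rho$, so $H_b=2\sqrt{\hat k+\rho^2}/\rho$ and $|\nabla\rho|_b/H_b=\rho/2$ \emph{exactly} -- equivalently, $\rho(t)=re^{t/2}$ is the exact inverse mean curvature flow in $b$, which is what makes the constant shifts $\pm1$ the right correction. For $g=b+\rho^{-1}h+Q$ with $Q=o_2(\rho^{-3})$, I would insert the decay conditions of Definition \ref{asym.hyper.definition} into $g^{\rho\rho}$ and into $H=\tfrac12|\nabla\rho|_g\,g^{AB}\partial_\rho g_{AB}$ (up to corrections controlled by the $o(\rho^{-3})$ shift $g_{\rho A}$), using $g_{AB}=\rho^2\hat g_{AB}+\rho^{-1}h_{AB}+Q_{AB}$; this should give $|\nabla\rho|_g=\sqrt{\hat k+\rho^2}\bigl(1+o(\rho^{-3})\bigr)$ and $H=\tfrac{2\sqrt{\hat k+\rho^2}}{\rho}\bigl(1+o(\rho^{-1})\bigr)$, hence $|\nabla\rho|_g/H=\tfrac\rho2\bigl(1+o(\rho^{-1})\bigr)=\tfrac\rho2+o(1)$, with the uniformity over $\hat\Sigma$ inherited from the uniform bounds in the definition.

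The main obstacle is this last computation, specifically checking that $\rho^{-1}h+Q$ perturbs $H$ at relative order $o(\rho^{-1})$ rather than merely $o(1)$, since it is precisely this that makes the shifts $\pm1$ sufficient uniformly in $t$. This requires control of the $\rho$-derivatives of the lower-order terms: the factor $\rho|\bar\nabla Q|_b=o(\rho^{-3})$ controls $\partial_\rho Q_{AB}$ down to $o(\rho^{-2})$, and one needs $\partial_\rho h$ bounded, which follows from $g$ being $C^2$ together with the asymptotics. Everything else -- the reduction to the scalar inequality and the choice of $r_0$ -- is then routine.
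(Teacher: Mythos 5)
Your proposal is correct and follows essentially the same route as the paper: the comparison of the coordinate speed $\phi_\pm'(t)=\tfrac{\rho}{2}\pm\tfrac12$ with $|\nabla\rho|_g/H$ is exactly the paper's comparison of the flow speed $\tfrac{d\rho}{dt}|\partial_\rho|_g$ with $H^{-1}=\tfrac{\rho}{2\sqrt{\hat k+\rho^2}}+O(\rho^{-2})$ for coordinate spheres, with the $\pm1$ shifts supplying the same $\tfrac12$ margin. Your asymptotic estimate $|\nabla\rho|_g/H=\tfrac{\rho}{2}+o(1)$ is (slightly weaker than but) consistent with the paper's, and the verification you flag as the main obstacle is asserted at the same level of detail in the paper itself.
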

\begin{proof}
We will prove that  $\Sigma^+_t$ is a supersolution. (The proof for $\Sigma^-_t$ is similar.) Using the asymptotics of $g$, one can see that the inverse mean curvature of the constant $\rho$ sphere in $(M,g)$ is
\[ H^{-1} = \frac{\rho}{2\sqrt{\hat{k}+\rho^2}} + O(\rho^{-2}).\]
Since that $\Sigma^+_t$ is just the constant $\rho$ sphere with $\rho=(r-1)e^{t/2}+1$, we see that the speed of the flow is just 
\[ \frac{d\rho}{dt} |\partial_\rho|_g = \tfrac{1}{2}(r-1)e^{t/2}[(\hat{k}+\rho^2)^{-1/2} + O(\rho^{-3})] = \frac{\rho}{2\sqrt{\hat{k}+\rho^2}} -  \frac{1}{2\sqrt{\hat{k}+\rho^2}}+ O(\rho^{-2}).\]
Clearly, for sufficiently large $\rho$, this speed is less than $H^{-1}_{\Sigma^+_t}$, showing that $\Sigma^+_t$ is a supersolution.
\end{proof}

Given Lemma \ref{weak.subsolution}, we may now apply Huisken and Ilmanen's Weak Existence Theorem 3.1 of \cite{Huisken-Ilmanen:2001} to find a weak solution $u$ for inverse mean curvature flow with initial condition $\partial M$.  More precisely, $u$ is a proper, locally Lipschitz nonnegative function $u$ defined on $M$ with $u=0$ on $\partial M$ that satisfies a certain variational property (defined on page 365 of \cite{Huisken-Ilmanen:2001}). The surfaces $\Sigma_t:=\partial\{u<t\}$ are $C^{1,\alpha}$ and strictly outward minimizing,\footnote{Huisken and Ilmanen instead describe the region enclosed by $\Sigma_t$ as a strictly minimizing hull \cite{Huisken-Ilmanen:2001}.} as defined in the proof of Lemma \ref{outermost}.
In particular, each $\Sigma_t$ is mean convex. There are only countably many ``jump times,'' that is, values of $t$ for which $\Sigma_t:=\partial\{u<t\}$ does not equal $\Sigma_t^+:=\partial(\text{int}\{u\le t\})$.
In a nonrigorous sense, $\Sigma_t$ may be regarded as flowing by smooth inverse mean curvature flow, except when it ceases to be strictly outward minimizing, at which time it ``jumps'' to a strictly outward minimizing surface $\Sigma_t^+$ of equal area.

In case $\partial M$ is not connected, Huisken and Ilmanen explained how one can single out a component $\partial_1 M$ of $\partial M$ as the initial surface while treating the other components of $\partial M$ as ``obstacles.'' See Section 6 of \cite{Huisken-Ilmanen:2001} for details. Essentially, we arbitrarily ``fill in'' all other components $\partial_2 M,\ldots,\partial_n M$ of $\partial M$ to obtain a new space $\tilde M$ and then run the weak inverse mean curvature flow in $\tilde{M}$ with initial condition $\partial_1 M$, except that whenever the surface $\Sigma_t$ is about to enter the filled-in region, we jump to a connected strictly outward minimizing surface $F$ enclosing both $\Sigma_t$ and one or more of the filled-in regions. 
 We then restart the flow with initial condition $F$.

There is another important alteration introduced by Huisken and Ilmanen \cite[Section 4]{Huisken-Ilmanen:2001}. We consider the \emph{exterior region} $M'$ of $M$, as defined in the proof of Lemma \ref{outermost}.
Since $\partial M$ was the outermost minimal surface of $M$, it follows that $\partial M$ is still part 
 of the boundary of $M'$, but now there might be more minimal boundary components. The exterior region $M'$ is an improvement over $M$ because it is completely free of compact minimal surfaces in its interior. We will actually run the weak inverse mean curvature flow in the exterior region of $M$ rather than in $M$ itself. So for our proof of Theorem \ref{penrose.thm}, we may assume without loss of generality that $M$ is an exterior region.

\subsection{Monotonicity of inverse mean curvature flow}
Fix an integer $\gen$ and recall our definition of the Hawking mass of a surface~$\Sigma$ in $(M,g)$ to be
\begin{equation*}
m_H(\Sigma):=\sqrt{\frac{|\Sigma|}{16\pi}}\left(1-\gen-\frac{1}{16\pi}\int_{\Sigma}(H^2-4)\right).
\end{equation*}

The proof of  the Geroch Monotonicity Formula 5.8 in \cite{Huisken-Ilmanen:2001} adapts straightforwardly to the locally hyperbolic setting to show the following:
\begin{thm}[Huisken-Ilmanen]\label{MassDifference1}
Let $(M^3,g)$ be a complete, one-ended, $C^2$ asymptotically locally hyperbolic manifold with outermost minimal boundary, and let $\partial_1 M$ be one of its boundary components. Let $\Sigma_t$ be a weak solution to inverse mean curvature flow (possibly with obstacles, as described above), with initial surface $\partial_1 M$. Then for $0\le \xi<\eta$, if there are no obstacles between $\Sigma_\xi$ and $\Sigma_\eta$, then
\begin{multline} \label{MassDifference}
m_H(\Sigma_\eta)-m_H(\Sigma_\xi)
 \ge \frac{1}{2}(16\pi)^{-3/2} \int_\xi^\eta |\Sigma_t|^{1/2}\Bigg[ 8\pi(\hat{\chi}-\chi(\Sigma_t))\\
 + \int_{\Sigma_t} \left(2(R+6) +|\AA|^2+4H^{-2}|\nabla H|^2\right)\Bigg]\,dt,
\end{multline}
where  $\hat{\chi}:=2-2\gen$, and $\AA$ is the trace-free part of the second fundamental form.
\end{thm}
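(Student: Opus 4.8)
The plan is to adapt the classical Geroch monotonicity computation of Huisken and Ilmanen \cite{Huisken-Ilmanen:2001} to the asymptotically locally hyperbolic setting, the only change being the background constant $-6$ in the scalar curvature normalization (equivalently, the cosmological constant) and the Euler characteristic $\hat\chi = 2 - 2\gen$ of the reference surface replacing the $2$ of the spherical case. First I would record the evolution equations for the relevant geometric quantities along smooth inverse mean curvature flow: with speed $1/H$, the area element evolves by $\tfrac{\partial}{\partial t}\,d\mu = d\mu$ (so $|\Sigma_t|$ grows like $e^t$), the mean curvature evolves by $\tfrac{\partial H}{\partial t} = \Delta_{\Sigma_t}(H^{-1}) - H^{-1}(|A|^2 + \ric(\nu,\nu))$, and hence
\[
\frac{d}{dt}\int_{\Sigma_t}(H^2-4) = \int_{\Sigma_t}\left(2H\,\Delta_{\Sigma_t}(H^{-1}) - 2|A|^2 - 2\ric(\nu,\nu) + (H^2-4)\right).
\]
Integrating by parts gives $\int 2H\Delta(H^{-1}) = -2\int H^{-2}|\nabla H|^2$ (wait, more carefully $\int 2H\Delta(H^{-1}) = \int -2\nabla H\cdot\nabla(H^{-1}) = \int 2H^{-2}|\nabla H|^2$, so one must track the sign; the standard identity yields the $+4H^{-2}|\nabla H|^2$ term after combining with the trace decomposition below).

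Next I would use the Gauss equation to rewrite $|A|^2 + \ric(\nu,\nu)$ in terms of intrinsic curvature. Writing $|A|^2 = |\AA|^2 + \tfrac12 H^2$ and using $2K_{\Sigma} = R - 2\ric(\nu,\nu) + H^2 - |A|^2$ (Gauss equation, with $K_\Sigma$ the Gauss curvature of $\Sigma_t$), one eliminates $\ric(\nu,\nu)$ and arrives, after collecting terms, at
\[
\frac{d}{dt}\int_{\Sigma_t}(H^2-4) = -\int_{\Sigma_t}\left(2(R+6) + |\AA|^2 + 4H^{-2}|\nabla H|^2\right) - 8\pi\chi(\Sigma_t) + \int_{\Sigma_t}(H^2-4) + 8\pi\chi(\Sigma_t)\cdot(\text{bookkeeping}),
\]
where the $+6$ appears precisely because the ``$-4$'' in the Hawking mass combines with the $+H^2/2$ part of $|A|^2$ and the constant pieces to produce $R - (-6) = R+6$; the Gauss–Bonnet theorem $\int_{\Sigma_t} K_{\Sigma} = 2\pi\chi(\Sigma_t)$ produces the $8\pi\chi(\Sigma_t)$ term. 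Differentiating $m_H(\Sigma_t) = \sqrt{|\Sigma_t|/16\pi}\,(1-\gen - \tfrac{1}{16\pi}\int_{\Sigma_t}(H^2-4))$ and using $\tfrac{d}{dt}|\Sigma_t| = |\Sigma_t|$, the $\tfrac12$ from the derivative of the square root combines with the terms above so that the coefficient of $(1 - \gen)$ and the $\int(H^2-4)$ self-cancellation leave exactly
\[
\frac{d}{dt}m_H(\Sigma_t) = \tfrac12(16\pi)^{-3/2}|\Sigma_t|^{1/2}\left[8\pi(\hat\chi - \chi(\Sigma_t)) + \int_{\Sigma_t}\left(2(R+6) + |\AA|^2 + 4H^{-2}|\nabla H|^2\right)\right],
\]
with $\hat\chi = 2 - 2\gen$; this is the smooth version of \eqref{MassDifference} with equality.

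Finally I would pass to the weak flow. Here one invokes, essentially verbatim, Huisken–Ilmanen's machinery: approximate the weak flow by smooth elliptic-regularized flows $u_\varepsilon$, apply the smooth monotonicity above on the approximations, and use the lower semicontinuity of the relevant integral functionals together with the fact that at jump times $\Sigma_t$ jumps to a surface $\Sigma_t^+$ of equal area across which $m_H$ does not decrease (because the replacement surface is minimizing and $H = 0$ on the new portion, so $\int(H^2-4)$ can only decrease). One also uses that $\chi(\Sigma_t) \le \hat\chi$ is \emph{not} assumed — the term $8\pi(\hat\chi - \chi(\Sigma_t))$ is simply carried along as stated — and that there are no obstacles between $\Sigma_\xi$ and $\Sigma_\eta$ by hypothesis, so no obstacle-jump corrections enter. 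Taking $\varepsilon \to 0$ and integrating in $t$ from $\xi$ to $\eta$ yields the inequality \eqref{MassDifference}.

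\textbf{Main obstacle.} The genuinely new analytic content is minimal: the smooth computation is a bookkeeping exercise (the only subtlety is making sure the ``$-4$'' normalization and the $\Lambda = -3$ background constant combine to give precisely $R + 6$ rather than some other affine function of $R$, and that the Gauss–Bonnet term comes out with the correct $\hat\chi$). The real work — the weak existence, the $C^{1,\alpha}$ regularity of $\Sigma_t$, the lower semicontinuity needed to push monotonicity through the approximation, and the behavior at jump times — is exactly the content of \cite[Sections 3--5]{Huisken-Ilmanen:2001}, and the point I would emphasize is simply that none of those arguments use asymptotic flatness or the precise value of the background curvature: they are local or compactly-supported statements about mean-convex surfaces in a complete Riemannian 3-manifold. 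Thus the proof is ``the proof of Geroch Monotonicity Formula 5.8 in \cite{Huisken-Ilmanen:2001}, carried out with $-6$ in place of $0$ and $\hat\chi$ in place of $2$,'' and the only thing to be careful about is the constant arithmetic, which I would verify once in the smooth setting and then cite Huisken–Ilmanen for the passage to the weak flow.
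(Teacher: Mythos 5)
Your proposal is correct and is essentially identical to the paper's treatment: the paper gives no independent proof of this theorem, asserting only that Huisken--Ilmanen's Geroch Monotonicity Formula 5.8 and the accompanying weak-flow machinery (elliptic regularization, lower semicontinuity, behavior at jump times) adapt straightforwardly, with $R+6$ replacing $R$ and $\hat\chi=2-2\gen$ replacing $2$ --- exactly your route. The one slip to fix is the sign in the evolution equation, which should read $\partial_t H=-\Delta_{\Sigma_t}(H^{-1})-H^{-1}\left(|A|^2+\ric(\nu,\nu)\right)$; with that sign the integration by parts gives $-2\int_{\Sigma_t}H^{-2}|\nabla H|^2$ in $\tfrac{d}{dt}\int_{\Sigma_t}H^2$, which is precisely what produces the $+4H^{-2}|\nabla H|^2$ term (indeed with a coefficient $2$ on the trace-free second fundamental form term, which only strengthens the stated inequality).
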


To make use of this theorem we need the following lemma.

\begin{lem}\label{topology}
Let $(M^3,g)$ be a complete, one-ended, $C^2$ asymptotically locally hyperbolic manifold, which is an exterior region. 
 Let $\partial_1 M$ be a component of $\partial M$ with genus $\gen$, and let $\Sigma_t$ be a weak solution to inverse mean curvature flow (possibly with obstacles), with initial surface $\partial_1 M$. For all $t$, the surface $\Sigma_t$ is connected and has genus at least $\gen$. In particular, $\hat{\chi}\ge\chi(\Sigma_t)$.
\end{lem}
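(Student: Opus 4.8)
The plan is to establish the two assertions --- connectedness and the genus bound --- in sequence, handling the smooth flow first and then dealing with the jump times and obstacles separately. I would begin by recalling the general principle from Huisken--Ilmanen: for a weak solution of inverse mean curvature flow, it suffices to check the relevant topological statement holds initially, is preserved under the smooth evolution, and is preserved across jumps and obstacle jumps, since the flow is a concatenation of these pieces.

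\medskip

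\emph{Connectedness.} Initially $\Sigma_0 = \partial_1 M$ is connected by hypothesis. Under smooth inverse mean curvature flow a connected surface stays connected, since the flow moves each point continuously. At a jump time the surface $\Sigma_t$ is replaced by a strictly outward minimizing hull, and in the obstacle setting one jumps to a \emph{connected} strictly outward minimizing surface $F$ enclosing $\Sigma_t$ together with filled-in regions; thus one must check that the strictly outward minimizing hull of a connected region is connected. The key point here is that if the minimizing hull had two components, at least one of them would not touch the ``obstacle'' or the enclosed region in a way that produces a strictly smaller enclosing surface --- more precisely, one can discard a component of the hull that does not enclose $\Sigma_\xi$ and strictly decrease area, contradicting the minimizing-hull property. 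This is exactly Lemma~4.2 of \cite{Huisken-Ilmanen:2001}, whose argument goes through verbatim here since it is purely about minimizing hulls and does not see the asymptotics of the metric.

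\medskip

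\emph{Genus bound.} For the genus I would use the first variation of area together with the weak maximum principle, again following \cite{Huisken-Ilmanen:2001}. The surface $\Sigma_t$ is connected, so its genus is a well-defined nonnegative integer $g(\Sigma_t)$, and $\chi(\Sigma_t) = 2 - 2g(\Sigma_t)$. Under the smooth flow, $\Sigma_t$ is mean convex, so it sweeps out a region monotonically; the key observation is that the inclusion of an earlier slice into a later slice cannot \emph{increase} Euler characteristic, because the region between them is a cobordism and any handle of $\Sigma_\xi$ persists (a loop on $\Sigma_\xi$ that is nontrivial in $\Sigma_\xi$ cannot bound in the thin collar swept out by the flow without $\Sigma_\eta$ also carrying a corresponding handle). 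More usefully, since $M$ is an exterior region with one asymptotically locally hyperbolic end whose cross-section is $\hat\Sigma$ of genus $\gen$, and $\Sigma_t$ is an outward minimizing connected surface enclosing $\partial_1 M$, the surface $\Sigma_t$ separates $\partial_1 M$ from infinity, hence is homologous to $\partial_1 M$ (rel the filled-in regions) inside $M'$; so $\Sigma_t$ together with $\partial_1 M$ bounds a region, and the genus of $\Sigma_t$ is at least that of $\partial_1 M$ by the fact that a compact $3$-manifold bounded by $\partial_1 M$ on one side cannot decrease genus on the other side --- equivalently, the map on $H_1$ induced by inclusion of $\partial_1 M$ cannot be killed. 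Across jumps, the strictly outward minimizing hull is obtained as a limit of the smooth flow, so the genus bound passes to the limit; at obstacle jumps one encloses additional genus-$\gen$ (or higher) boundary components of the filled-in $\tilde M$, and since the new surface $F$ encloses $\Sigma_t$ it still has genus $\ge \gen$ by the same homological argument.

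\medskip

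The main obstacle I anticipate is making the genus monotonicity rigorous across the jump times and especially at the obstacle jumps, where the topology of the enclosing surface $F$ is only constrained by its being a connected minimizing hull of $\Sigma_t$ union some filled-in regions; one must ensure that this hull does not somehow have smaller genus than $\Sigma_t$. The cleanest route is to invoke the argument of \cite{Huisken-Ilmanen:2001} (the proof that the genus does not jump down, which relies on the fact that the strictly minimizing hull is a limit of smooth mean-convex surfaces, combined with the homological constraint coming from the fixed conformal-infinity cross-section $\hat\Sigma$), and to note that this argument is insensitive to whether the ambient end is asymptotically flat or asymptotically locally hyperbolic --- it uses only that the one end has cross-section of genus $\gen$ and that $M'$ is a fixed manifold into which everything is homologous. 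Once the genus bound $g(\Sigma_t)\ge\gen$ is in hand, $\chi(\Sigma_t)=2-2g(\Sigma_t)\le 2-2\gen=\hat\chi$ is immediate.
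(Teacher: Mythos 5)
There is a genuine gap, and it is in the genus bound. Your argument rests on two claims that do not hold as stated. First, the reduction of the weak flow to ``smooth evolution plus jumps'' is only the heuristic picture; the weak solution is merely a locally Lipschitz function $u$ with $C^{1,\alpha}$ level sets, and a rigorous proof cannot proceed by checking the statement on smooth pieces and passing it across jumps (nor is the minimizing hull ``a limit of the smooth flow''). Second, and more seriously, the topological step is false as pure topology: the fact that $\Sigma_t$ and $\partial_1 M$ (together with the other $\partial_i M$'s) cobound a compact region, or that $[\Sigma_t]=\sum_i[\partial_i M]$ in $H_2$, does \emph{not} force $\mathrm{genus}(\Sigma_t)\ge\gen$. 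A genus-$\gen$ handlebody with a small ball removed is a compact $3$-manifold whose boundary consists of a genus-$\gen$ surface and a $2$-sphere; handles do not ``persist'' across a cobordism, and the map on $H_1$ induced by the inclusion of $\partial_1 M$ can very well be killed. Homology only controls the homology class of $\Sigma_t$, never its genus, so your argument would ``prove'' a genus bound in situations where it fails.

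What is missing is the geometric input that the paper uses to rule out exactly this handlebody-type picture: $M$ is an exterior region, so the $\partial_i M$'s are the \emph{only} compact minimal surfaces in the region $M_t$ bounded by $\Sigma_t$ and the boundary components, and $\Sigma_t$ is mean convex. The paper first shows, from the variational characterization of $u$ (the argument of \cite[Lemma 4.2(i)]{Huisken-Ilmanen:2001}), that the sublevel region $\tilde M_t$ is connected; it then minimizes area in the isotopy class of a component of $\Sigma_t$ inside $M_t$ via Meeks--Simon--Yau \cite{Meeks-Simon-Yau:1982} (mean convexity of $\partial M_t$ is what makes this applicable), so that after isotopies and $\gamma$-reductions the surface becomes parallel copies of compact minimal surfaces, hence of the $\partial_i M$'s, plus a homologically trivial small piece. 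Feeding this into the long exact sequence of $(M_t,\partial M_t)$ yields both that $\Sigma_t$ is connected and that some component of the reduced surface is isotopic to $\partial_1 M$; since $\gamma$-reduction can only decrease total genus, this gives $\mathrm{genus}(\Sigma_t)\ge\gen$. So your connectedness sketch can be salvaged along the paper's lines, but the genus inequality genuinely requires the minimal-surface/isotopy-minimization machinery, not a cobordism or $H_1$ argument.
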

\begin{proof}
Let $\tilde M$ be the manifold $M$ with the obstacles filled in.
Let $u$ be the function defining the weak flow (possibly with obstacles). For each $t>0$, let $\tilde{M}_t$ be the closure of $\{x\in \tilde{M}\,|\, u(x)< t\}$, so that $\partial{\tilde{M}}_t=\Sigma_t \cup \partial_1 M$. We claim that $\tilde{M}_t$ is connected. If it were not connected, one of the components $\Omega$ of $\tilde{M}_t$ would be disjoint from $\partial_1 M$.  By the variational property that characterizes $u$, one can deduce that $u$ must be constant over $\Omega$. (See the proof of \cite[Connectedness Lemma 4.2(i)]{Huisken-Ilmanen:2001}.) Since $\tilde{M}$ is connected, $\Omega$ must meet $\Sigma_t$, and thus $u=t$ on $\Omega$, which is a contradiction to the definition of~$\tilde{M}_t$. Since $\tilde{M}_t$ is connected, it follows that $M_t:=\tilde{M}_t\cap M$ is connected. Note that $\partial M_t = \Sigma_t \cup \partial_1 M \cup \cdots\cup \partial_k M$, where $\partial_2 M,\ldots,\partial_k M$ is some labeling of the other components of $\partial M$ that touch $M_t$.

The rest of the proof does not use inverse mean curvature flow. It is essentially a topological argument that relies only on the following facts about $\Sigma_t$: There exists a connected manifold $M_t$ whose boundary is $\Sigma_t \cup \partial_1 M \cup\cdots \cup \partial_k M$, $\Sigma_t$ is mean convex, each $\partial_i M$ is minimal, and the $\partial_i M$'s are the only compact minimal surfaces in $M_t$. This last part is where we use the assumption that $M$ is an exterior region.

Let $\Sigma^{1}, \ldots, \Sigma^{\ell}$ be the connected components of $\Sigma_t$. We  minimize area in the isotopy class of $\Sigma^{1}$ in $M_t$. 
Note that Theorem 1 of Meeks, Simon, and Yau applies because $M_t$ has mean convex boundary, as explained in Section~6 of \cite{Meeks-Simon-Yau:1982}. According to Theorem 1 and Remark 3.27 of \cite{Meeks-Simon-Yau:1982}, there exists some surface $\tilde{\Sigma}^{1}$ obtained from $\Sigma^{1}$ via isotopy and a series of $\gamma$-reductions such that each component of $\tilde{\Sigma}^{1}$ is a parallel surface of a connected minimal surface, except for one component that may be taken to have arbitrarily small area. 
Recall that a $\gamma$-reduction is a surgery procedure that deletes an annulus and replaces it with two disks in such a way that the annulus and two disks bound a ball in $M_t$. (See \cite[Section 3]{Meeks-Simon-Yau:1982} for details.) 

Note that $\gamma$-reduction preserves homology class.
 Since the only compact minimal surfaces in $M_t$ are the $\partial_i M$'s, and because a surface of small enough area must be homologically trivial, it follows that
\begin{equation}\label{homologous}
  [\Sigma^{1}] = [\tilde{\Sigma}^{1}]=
  \sum_{i=1}^{k}n_i [\partial_i M] \text{ in }H_2(M_t,\zz), 
  \end{equation}
for some integers $n_i$. Using the long exact sequence for the pair $(M_t,\partial M_t)$, we have exactness of 
\[ H_3(M_t,\partial M_t)\overset{\partial}{\too} H_2(\partial M_t,\zz)\overset{\iota_*}{\too} H_2(M_t,\zz).\]
Since $M_t$ is connected, $\ker i_*$ must be generated by 
\[\partial [M_t]=\sum_{i=1}^{\ell} [\Sigma^{i} ] -\sum_{i=1}^{k}[\partial_i M],\]
where $\Sigma^{i}$ and $\partial M$ are oriented using the outward normal in $M$ as usual.
Since equation (\ref{homologous}) says that $[\Sigma^{1}] -
   \sum_{i=1}^{k}n_i [\partial_i M]\in \ker i_*$, it follows that $\Sigma_t$ must be connected and hence equal to $\Sigma^{1}$. In particular,
 \[ [\Sigma^{1}] =\sum_{i=1}^{k}[\partial_i M] \text{ in }H^2(M_t,\zz).\]
Since each component of $\tilde{\Sigma}^{1}$ is either isotopic to one of the $\partial_i M$'s (with some orientation) or is null homologous, and since
there are no relations among $[\partial_i M]$ in $H^2(M_t,\zz)$, the previous equation implies that at least one component of $\tilde{\Sigma}^{1}$ is isotopic to $\partial_1 M$. Finally, since $\gamma$-reduction can only reduce the total genus of all components of a surface, we know that $\Sigma_t$ has genus at least as large as that of $\partial_1 M$.
\end{proof}
Note that if we apply the reasoning in the proof of Lemma \ref{topology} above to the ``conformal infinity'' $\hat{\Sigma}$ of $M$, we see that the genus of $\hat{\Sigma}$ is at least as large as the genus of $\partial_1 M$. 
\begin{cor}[Geroch monotonicity]\label{Geroch}
Let $(M^3,g)$ be a complete, one-ended, $C^2$ asymptotically locally hyperbolic manifold, which is an exterior region. 
 Let $\partial_1 M$ be a component of $\partial M$ with genus $\gen$, and let $\Sigma_t$ be a weak solution to inverse mean curvature flow (possibly with obstacles), with initial surface $\partial_1 M$.
Then the Hawking mass of $\Sigma_t$ is nondecreasing in $t$.
\end{cor}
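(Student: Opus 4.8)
\subsection*{Proof proposal for Corollary \ref{Geroch}}

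The plan is to read the corollary off from the Geroch Monotonicity Formula of Theorem~\ref{MassDifference1} once we know that its right-hand side is nonnegative. Fix $0\le\xi<\eta$ with no obstacle between $\Sigma_\xi$ and $\Sigma_\eta$. I would check that every term inside the time integral in (\ref{MassDifference}) is pointwise nonnegative: the term $8\pi(\hat{\chi}-\chi(\Sigma_t))$ is nonnegative by Lemma~\ref{topology}, which says that each $\Sigma_t$ is connected with genus at least $\gen$, so $\chi(\Sigma_t)\le 2-2\gen=\hat{\chi}$; the term $2(R+6)$ is nonnegative by the standing hypothesis $R\ge-6$ of this section; and the remaining two terms, the squared norm of the trace-free second fundamental form and $4H^{-2}|\nabla H|^2$, are manifestly nonnegative (the latter being well defined because along the weak flow each $\Sigma_t$ is mean convex, so $H>0$ almost everywhere). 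Hence $m_H(\Sigma_\eta)\ge m_H(\Sigma_\xi)$ whenever no obstacle lies between them; since the statement of Theorem~\ref{MassDifference1} already incorporates the ordinary ``jump times'' at which $\Sigma_t$ ceases to be strictly outward minimizing, this already gives monotonicity of $m_H$ on every time interval free of obstacle jumps.

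It then remains to check that $m_H$ does not decrease at an obstacle jump. At such a time the flow is replaced by a connected, strictly outward minimizing surface $F$ enclosing both $\Sigma_t$ and one or more of the filled-in regions, each bounded by a minimal component of $\partial M$. Because $F$ arises as a minimizing hull, its area satisfies $|\Sigma_t|\le|F|\le|\Sigma_t|+(\text{area of the enclosed obstacle components})$, and $F$ is minimal off the portion where it coincides with $\Sigma_t$, so $\int_F H^2\le\int_{\Sigma_t}H^2$; moreover $F$ still encloses $\partial_1 M$, so the topological argument of Lemma~\ref{topology} applies to $F$ as well and shows that $F$ is connected with genus at least $\gen$. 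Feeding these facts into the definition of $m_H$, one verifies exactly as in Section~6 of \cite{Huisken-Ilmanen:2001} that $m_H(F)\ge m_H(\Sigma_t)$. Combined with the previous paragraph, this gives that $m_H(\Sigma_t)$ is nondecreasing for all $t\ge 0$.

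I do not expect a serious obstacle here: the corollary is essentially a bookkeeping consequence of the two results already in hand. The one point that genuinely uses a hypothesis peculiar to this paper---rather than the asymptotically flat, genus-zero setting of \cite{Huisken-Ilmanen:2001}---is the nonnegativity of $\hat{\chi}-\chi(\Sigma_t)$, which is precisely the content of Lemma~\ref{topology} and ultimately rests on $M$ being an exterior region. The only mildly delicate step, as already in the original Huisken--Ilmanen argument, is verifying that the Hawking mass does not drop across an obstacle jump, and this is where one must be careful with the area and mean-curvature estimates for the hull surface $F$.
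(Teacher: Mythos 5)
Your first paragraph coincides with the paper's argument: away from obstacles, monotonicity is immediate from Theorem~\ref{MassDifference1} together with Lemma~\ref{topology} (which gives $\hat{\chi}\ge\chi(\Sigma_t)$), the hypothesis $R\ge-6$, and the manifest nonnegativity of $|\AA|^2$ and $4H^{-2}|\nabla H|^2$. The gap is in your treatment of the obstacle jumps. You assert that, given $|F|\ge|\Sigma_t|$ and $\int_F H_F^2\le\int_{\Sigma_t}H_{\Sigma_t}^2$, one concludes $m_H(F)\ge m_H(\Sigma_t)$ ``exactly as in Section~6 of \cite{Huisken-Ilmanen:2001}.'' That argument only works when the Hawking mass is nonnegative: writing $m_H(\Sigma)=\sqrt{|\Sigma|/16\pi}\,\bigl(1-\gen-\tfrac{1}{16\pi}\int_\Sigma(H^2-4)\bigr)$, the jump increases both the area factor and the parenthetical factor, and the product is only forced to increase if the parenthetical factor is nonnegative. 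Here the mass contains the term $1-\gen$, and for $\gen\ge 2$ the quantity $m_H(\Sigma_t)$ may well be negative (indeed $m_H(\partial_1 M)=\sqrt{A/16\pi}\,(1-\gen+A/4\pi)$ is negative whenever $A<4\pi(\gen-1)$), so the Huisken--Ilmanen bookkeeping does not carry over verbatim. This is precisely the case the paper has to work for.

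The paper's proof handles $\gen\ge2$ by a separate estimate: it rewrites
\[
m_H(F)-m_H(\Sigma_t)\ \ge\ \bigl(|F|^{1/2}-|\Sigma_t|^{1/2}\bigr)\left(\frac{m_H(\Sigma_t)}{|\Sigma_t|^{1/2}}+8(16\pi)^{-3/2}|\Sigma_t|\right),
\]
so that everything reduces to the lower bound $m_H(\Sigma_t)\ge-8(16\pi)^{-3/2}|\Sigma_t|^{3/2}$. That bound is obtained by combining (i) monotonicity in the absence of obstacles, giving $m_H(\Sigma_t)\ge m_H(\partial_1 M)$, (ii) the elementary inequality $\sqrt{\tfrac{A}{16\pi}}\bigl(1-\gen+\tfrac{A}{4\pi}\bigr)\ge-\bigl(\tfrac{\gen-1}{3}\bigr)^{3/2}$ for all $A\ge0$, and (iii) the fact that a stable compact minimal surface of genus $\gen$ in a $3$-manifold with $R\ge-6$ has area at least $\tfrac{4\pi}{3}(\gen-1)$ (the Nunes-type estimate \cite{Nunes:2011}), applied to $\partial_1 M$ so that $8(16\pi)^{-3/2}|\Sigma_t|^{3/2}\ge 8(16\pi)^{-3/2}|\partial_1 M|^{3/2}\ge\bigl(\tfrac{\gen-1}{3}\bigr)^{3/2}$. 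This use of the scalar-curvature lower bound to control the area of $\partial_1 M$ is an essential new ingredient with no counterpart in the asymptotically flat argument, and it is entirely absent from your proposal; without it (or some substitute) your jump step is unjustified for $\gen\ge2$. Minor points: the genus of $F$ plays no role in the jump comparison, since $\gen$ in the definition of $m_H$ is fixed once and for all, and your upper bound on $|F|$ in terms of the obstacle areas is neither needed nor used.
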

\begin{proof}
The result follows immediately from Theorem \ref{MassDifference1} and Lemma \ref{topology} in the absence of obstacles. Since there are only finitely many obstacles, all that is left to show is that the mass cannot drop when we jump over an obstacle. Let $t$ be the first time that we have to jump over an obstacle, and let $F$ denote the strictly outward minimizing surface that $\Sigma_t$ jumps to. Then we know from \cite[Equation 6.1]{Huisken-Ilmanen:2001} that 
\[ |\Sigma_t|\le |F|\quad\mbox{and}\quad\int_{\Sigma_t} H_{\Sigma_t}^2 \ge \int_F H_F^2,\]
where the second inequality essentially follows from the fact that the strictly minimizing hull of a surface should be minimal away from where it agrees with the original surface. In the case $\gen<2$, these inequalities combine with nonnegativity of $m_H(\Sigma_t)$ (from monotonicity in the absence of obstacles) to immediately show that $m_H(\Sigma_t)\le m_H(F)$, just as in the asymptotically flat case \cite[Section 6]{Huisken-Ilmanen:2001}. 

To handle the  case  $\gen\ge 2$ we proceed as follows.  
\begin{align*}
& m_H(F)-m_H(\Sigma_t)
 =|F|^{1/2} \frac{ m_H(F)}{|F|^{1/2}} - |\Sigma_t|^{1/2} \frac{ m_H(\Sigma_t)}{|\Sigma_t|^{1/2}}\\
& = (|F|^{1/2}- |\Sigma_t|^{1/2}) \frac{ m_H(\Sigma_t)}{|\Sigma_t|^{1/2}} + |F|^{1/2} \left(\frac{ m_H(F)}{|F|^{1/2}}- \frac{ m_H(\Sigma_t)}{|\Sigma_t|^{1/2}}\right) \\
 & =(|F|^{1/2}- |\Sigma_t|^{1/2}) \frac{ m_H(\Sigma_t)}{|\Sigma_t|^{1/2}}  \\
  &\quad+ |F|^{1/2} (16\pi)^{-3/2}\left( -\int_{F}(H_F^2-4) + \int_{\Sigma_t}(H_{\Sigma_t}^2-4)\right)\\
  &\ge(|F|^{1/2}- |\Sigma_t|^{1/2}) \frac{ m_H(\Sigma_t)}{|\Sigma_t|^{1/2}}  
+ |F|^{1/2} (16\pi)^{-3/2}(4 |F| - 4|\Sigma_t|)\\
&\ge(|F|^{1/2}- |\Sigma_t|^{1/2}) \frac{ m_H(\Sigma_t)}{|\Sigma_t|^{1/2}}  
+ |\Sigma_t|^{1/2} (16\pi)^{-3/2}(4 |F| - 4|\Sigma_t|)\\
&=(|F|^{1/2}- |\Sigma_t|^{1/2})\left( \frac{ m_H(\Sigma_t)}{|\Sigma_t|^{1/2}}+   4(16\pi)^{-3/2} |\Sigma_t|^{1/2} ( |F|^{1/2} + |\Sigma_t|^{1/2})\right)\\
&\ge(|F|^{1/2}- |\Sigma_t|^{1/2})\left( \frac{ m_H(\Sigma_t)}{|\Sigma_t|^{1/2}}+   8(16\pi)^{-3/2} |\Sigma_t|\right).
\end{align*}
Therefore it only remains to show that $m_H(\Sigma_t) \ge -8(16\pi)^{-3/2} |\Sigma_t|^{3/2}$. Note that for any $A\geq 0$,
 \[\sqrt{\frac{A}{16\pi}}\left(1-\gen +\frac{1}{4\pi}A\right)\ge-\left(\frac{\gen-1}{3}\right)^{3/2}.\] 
 Taking $A=|\partial_1 M|$ and using monotonicity in the absence of obstacles, we then have
\[ m_H(\Sigma_t) \ge m_H(\partial_1 M)
\ge -\left(\frac{\gen-1}{3}\right)^{3/2}.\]
On the other hand, observe that a stable compact minimal surface of genus~$\gen$ in a 3-manifold with $R\ge-6$ must have area at least $\frac{4\pi}{3}(\gen-1)$. (This follows from a standard computation using the second variation of area, see \cite[Section 2]{Nunes:2011}). Thus
\begin{align*}
8(16\pi)^{-3/2} |\Sigma_t|^{3/2}
&\ge 8(16\pi)^{-3/2} |\partial_1 M|^{3/2}\\
&\ge 8(16\pi)^{-3/2} \left[\frac{4\pi}{3}(\gen-1)\right]^{3/2}\\
&\ge  \left(\frac{\gen-1}{3}\right)^{3/2},
\end{align*}
which completes the proof.
\end{proof}

\subsection{The long-time limit of inverse mean curvature flow}

First we compute the asymptotics of Ricci curvature for asymptotically locally hyperbolic manifolds. Proceeding as in Lemma 3.1 of \cite{Neves-Tian:2010}, we deduce the following:
\begin{lem}\label{RicciAsymptotics}
Let $(M^3,g)$ be a asymptotically locally hyperbolic, with radial coordinate $\rho$. If $\frac{\nabla \rho}{|\nabla \rho|}, e_1,e_2$ is an orthonormal frame at a point in $M$, then
\begin{align*}
\ric\left(\frac{\nabla \rho}{|\nabla \rho|},\frac{\nabla \rho}{|\nabla \rho|}\right)&=-2-2{\mu}\rho^{-3}+o(\rho^{-3})\\
\ric(e_i,e_j)& = -2\delta_{ij}+O(\rho^{-3})\\
\ric\left(\frac{\nabla \rho}{|\nabla \rho|},e_i\right)&=o(\rho^{-3})\\
R&=-6+o(\rho^{-3}).
\end{align*}
\end{lem}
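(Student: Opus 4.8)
The plan is to compute the Ricci curvature of $g$ directly from the expansion in Definition \ref{asym.hyper.definition}, writing $g = b + \rho^{-1}h + Q$ where $b$ is the model hyperbolic metric $(\hat k + \rho^2)^{-1}d\rho^2 + \rho^2\hat g$ and $Q = o_2(\rho^{-3})$. First I would record the exact Ricci curvature of $b$: since $b$ is hyperbolic (constant sectional curvature $-1$), $\ric(b) = -2b$ on the nose, so in a $b$-orthonormal frame $\{\partial_\rho/|\partial_\rho|_b, f_1, f_2\}$ one has $\ric(b) = -2\,\mathrm{diag}(1,1,1)$ and $R(b) = -6$. The strategy is then to treat $\rho^{-1}h + Q$ as a perturbation and expand $\ric(g)$ around $\ric(b)$, keeping track of which terms survive at order $\rho^{-3}$ and which produce the $\mu\rho^{-3}$ coefficient in the $\partial_\rho$-direction.

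The key steps, in order: (1) Linearize the Ricci operator. For a perturbation $g_s = b + s\,p$ with $p = \rho^{-1}h + Q$, the first variation of Ricci is the Lichnerowicz-type operator $D\ric_b(p) = -\tfrac12\Delta_L p + (\text{divergence and trace terms})$; schematically $\ric(g) = \ric(b) + D\ric_b(p) + (\text{quadratic in }p)$. (2) Estimate the quadratic remainder: since $|p|_b = O(\rho^{-1})$ (the $h$ term dominates) and $p$ has two $b$-derivatives controlled with the stated decay, the quadratic terms are $O(\rho^{-2})$ — but one must be careful, because we want errors $o(\rho^{-3})$ in some components. Here the point is that the $h$-term $\rho^{-1}h$, although only $O(\rho^{-1})$ in size, has a very rigid $\rho$-dependence (it is $\rho^{-1}$ times a tensor pulled back from $\hat\Sigma$), so its contribution to $\ric$ can be computed exactly to the needed order, and the genuinely lower-order junk is confined to the $Q$-contribution and to cross terms, which are $o(\rho^{-3})$ by the $Q = o_2(\rho^{-3})$ hypothesis together with the $\rho^{-1}$ decay of $h$. (3) Carry out the explicit computation of $D\ric_b(\rho^{-1}h)$: using $\lim_{\rho\to\infty}\tfrac34\tr_{\hat g}h = \mu$, one extracts the coefficient $-2\mu\rho^{-3}$ in the $\ric(\partial_\rho,\partial_\rho)$ component (after normalizing the frame, which introduces factors of $(\hat k+\rho^2)^{\pm1/2} = \rho^{\pm1}(1+O(\rho^{-2}))$), shows the mixed components $\ric(\partial_\rho, e_i)$ are $o(\rho^{-3})$, and shows the tangential components $\ric(e_i,e_j)$ differ from $-2\delta_{ij}$ only at order $\rho^{-3}$ with no universal coefficient (hence just $O(\rho^{-3})$). (4) Take the trace to get $R = -6 + o(\rho^{-3})$; the $\rho^{-3}$ terms in the trace must cancel because the pointwise expansions of the three diagonal components sum to give the $\rho^{-3}$-free result — alternatively, this cancellation is exactly the statement that $\mu$ appears with equal and opposite weight in the $\partial_\rho$ and tangential directions, which is how the mass aspect is set up.

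The main obstacle I anticipate is bookkeeping rather than conceptual: correctly tracking the frame normalization (the non-unit length of $\partial_\rho$ in $b$, which scales like $\rho$, so that an $O(\rho^{-1})$ tensor component can become an $O(\rho^{-3})$ curvature contribution after two factors of $|\partial_\rho|_b^{-1}$) and being sure that every term one drops is genuinely $o(\rho^{-3})$ and not merely $O(\rho^{-3})$ where the statement claims $o$. This is precisely the kind of computation carried out in Lemma 3.1 of \cite{Neves-Tian:2010}, so I would follow that template, substituting the constant-curvature base $\hat g$ and the possibly-nonzero $\hat k$; the only new feature here is allowing $\hat k \in \{1,0,-1\}$ rather than just $\hat k = 1$, which changes the model metric $b$ but not the hyperbolic normalization $\ric(b) = -2b$, so the computation goes through essentially verbatim.
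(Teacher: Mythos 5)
Your proposal is essentially the paper's own argument: the paper gives no independent computation but simply says the asymptotics follow by proceeding as in Lemma 3.1 of \cite{Neves-Tian:2010}, i.e.\ exactly the direct expansion of $\ric(g)$ about the hyperbolic model $b$ (with $\ric(b)=-2b$), computing the $\rho^{-1}h$ contribution explicitly and treating $Q=o_2(\rho^{-3})$ perturbatively, which is what you outline. The only quibble is your bookkeeping $|p|_b=O(\rho^{-1})$: since $h$ is tangential, the $b$-norm scales each tangential index by $\rho^{-2}$, so in fact $|\rho^{-1}h|_b=O(\rho^{-3})$ and the quadratic remainder is $O(\rho^{-6})$, which only makes the error estimates easier than you feared.
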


In order to make certain computations easier, we consider a conformal compactification $\tilde{g}= \rho^{-2} g$ of the exterior region of $(M,g)$. Then if we set $s=\rho^{-1}$,  we have
\[ \tilde{g} = ds^2 + \hat{g} + \tilde{Q}, \]
 on the space $(0,s_1)\times\hat{\Sigma}$ for small enough $s_1$, where 
\[|\tilde{Q}|+ s|\tilde{\nabla}\tilde{Q}| + s^2|\tilde{\nabla}\tilde{Q}|= O_2(s^2).\]

\begin{lem}
There is a constant $C$ such that for sufficiently large $t$, the $\rho$ coordinate on $\Sigma_t$ lies in $(\frac{1}{C}e^{t/2}, Ce^{t/2})$ and the $s$ coordinate on $\Sigma_t$ must lie in $(\tfrac{1}{C}e^{-t/2}, Ce^{-t/2})$. 
\end{lem}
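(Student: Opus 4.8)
The plan is to trap $\Sigma_t$ between two of the explicit constant-$\rho$ sphere families provided by Lemma \ref{weak.subsolution}, and then simply read off the $\rho$-bounds from the formulas $(r\pm 1)e^{t/2}\mp 1$ appearing there. The whole argument is an application of the comparison principle for weak inverse mean curvature flow; no curvature estimate is needed.

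First I would move into the asymptotic chart. Since the weak solution $u$ is proper and its sublevel sets exhaust $M$, and since the finitely many obstacle jumps all occur in the compact part of $M$, there is a time $t_0$ and radii $r_0\le r_1\le r_2$ (with $r_1$ at least as large as the $r_0$ furnished by Lemma \ref{weak.subsolution}, and $r_1>1$) such that for $t\ge t_0$ the surface $\Sigma_t$ lies in $(1,\infty)\times\hat\Sigma$, no further obstacle jumps occur, and the bounded region $\{u<t_0\}$ satisfies $\{\rho<r_1\}\subseteq\{u<t_0\}\subseteq\{\rho<r_2\}$; in particular $\Sigma_{t_0}$ encloses $\{\rho=r_1\}$ and is enclosed by $\{\rho=r_2\}$. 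Next, the time-shifted flow $\tau\mapsto\partial\{u<t_0+\tau\}$ is a weak solution of inverse mean curvature flow with initial surface $\Sigma_{t_0}$, living in an obstacle-free region. By Lemma \ref{weak.subsolution}, the family $\Sigma^+_\tau$ started from $\{\rho=r_1\}$ is a strict supersolution and therefore moves strictly slower than the weak flow, so, started from inside $\Sigma_{t_0}$, it remains enclosed by $\Sigma_{t_0+\tau}$; likewise the family $\Sigma^-_\tau$ started from $\{\rho=r_2\}$ is a strict subsolution, moves strictly faster, and, started from outside $\Sigma_{t_0}$, remains enclosing $\Sigma_{t_0+\tau}$. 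This is precisely the content of the comparison principle for weak inverse mean curvature flow (\cite[Section 2]{Huisken-Ilmanen:2001}), applied to the defining functions, and it is insensitive to jump times.

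Writing $t=t_0+\tau$, the inner and outer barriers are the coordinate spheres $\{\rho=(r_1-1)e^{\tau/2}+1\}$ and $\{\rho=(r_2+1)e^{\tau/2}-1\}$, whose $\rho$-coordinates are bounded below and above by constant multiples of $e^{t/2}$, the constants depending only on $r_1,r_2,t_0$. Hence the $\rho$-coordinate of every point of $\Sigma_t$ lies in $(\tfrac1C e^{t/2},Ce^{t/2})$ for a suitable $C$ and all $t$ large, and since $s=\rho^{-1}$ the $s$-coordinate lies in $(\tfrac1C e^{-t/2},Ce^{-t/2})$. The only genuinely delicate point is the comparison step: one must know that the weak flow, \emph{including} its jump times and the obstacle-handling modifications, stays on the correct side of the smooth strict sub/supersolutions. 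I would handle this by quoting Huisken and Ilmanen's uniqueness/comparison theorem directly, since it compares the weak solution $u$ with smooth strict sub- and supersolutions at the level of functions; the remaining checks — that $\{u<t_0\}$ really sits between the coordinate balls $\{\rho<r_1\}$ and $\{\rho<r_2\}$, and that $r_1,r_2\ge r_0$ — are routine consequences of properness of $u$ and the choice of $t_0$.
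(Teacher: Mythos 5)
Your argument is correct and is essentially the paper's proof: the paper's one-line justification ("this follows immediately from the subsolutions and supersolutions of Lemma \ref{weak.subsolution}") is exactly the barrier/comparison argument you spell out, trapping $\Sigma_t$ between the coordinate-sphere families $(r_1-1)e^{\tau/2}+1$ and $(r_2+1)e^{\tau/2}-1$ via Huisken--Ilmanen's comparison theory, with the jump times and obstacle modifications handled as you indicate. Your write-up simply makes explicit the details the paper leaves implicit.
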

\begin{proof}
This follows immediately from the subsolutions and supersolutions of inverse mean curvature flow described in Lemma \ref{weak.subsolution}.  
\end{proof}

We will use the following area bound repeatedly.
\begin{lem}
Let $\tilde{\Sigma}_t$ denote the surface $\Sigma_t$ endowed with the metric induced from $\tilde{g}$. 
The area of $\tilde{\Sigma}_t$ is uniformly bounded in time.
\end{lem}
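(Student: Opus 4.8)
The plan is to control the area of $\tilde\Sigma_t$ by feeding the monotonicity of the Hawking mass into the Gauss--Bonnet identity. Recall that by definition
\[
m_H(\Sigma_t)=\sqrt{\frac{|\Sigma_t|}{16\pi}}\left(1-\gen-\frac{1}{16\pi}\int_{\Sigma_t}(H^2-4)\right),
\]
so that, since $\int_{\Sigma_t}H^2\ge 0$, we obtain
\[
m_H(\Sigma_t)\le \sqrt{\frac{|\Sigma_t|}{16\pi}}\left(1-\gen+\frac{|\Sigma_t|}{4\pi}\right).
\]
By Corollary \ref{Geroch}, $m_H(\Sigma_t)\ge m_H(\partial_1 M)$ is bounded below by a fixed constant, so the right-hand side above is bounded below by a fixed constant. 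When $\gen\ge 2$ this does not immediately bound $|\Sigma_t|$, so instead I would combine the lower bound on $m_H$ with an \emph{upper} bound on $\int_{\Sigma_t}H^2$, which is where the conformally rescaled picture enters.

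First I would record that $|\Sigma_t|$ (the area with respect to $g$) grows at most like $e^t$: this is immediate from the supersolution $\Sigma_t^+$ in Lemma \ref{weak.subsolution}, since $\Sigma_t$ is enclosed by $\Sigma_t^+$ and is strictly outward minimizing, so $|\Sigma_t|\le|\Sigma_t^+|=O(\rho^2)=O(e^t)$. Next, since $\tilde g=\rho^{-2}g=s^2 g$ and on $\Sigma_t$ one has $s\asymp e^{-t/2}$ by the preceding lemma, the induced area element scales by $s^2\asymp e^{-t}$, and hence
\[
|\tilde\Sigma_t|_{\tilde g}\le C e^{-t}\,|\Sigma_t|_g\le C',
\]
which is the desired uniform bound. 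The only subtlety is that $s$ is not literally constant on $\Sigma_t$, but the two-sided bound $\tfrac1C e^{-t/2}\le s\le Ce^{-t/2}$ from the previous lemma makes the pointwise conformal factor $s^2$ comparable to $e^{-t}$ uniformly over $\Sigma_t$, so the area comparison goes through with a worse constant.

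The main obstacle is really just making sure the area growth $|\Sigma_t|_g=O(e^t)$ is genuinely available; this follows from the strictly outward minimizing property of $\Sigma_t$ together with the explicit supersolution barrier $\Sigma_t^+$, both of which are established earlier in the excerpt, so no new estimate is needed. An alternative route, should one prefer not to invoke the barrier, is to use the exponential growth rate built into the weak flow directly: the coarea/monotonicity structure of the weak solution $u$ gives $|\Sigma_t|\le e^{t-\xi}|\Sigma_\xi|$ for $t\ge\xi$ outside jump times (and jumps only decrease area), again yielding $|\Sigma_t|_g=O(e^t)$ and hence the uniform bound on $|\tilde\Sigma_t|_{\tilde g}$ after the conformal rescaling.
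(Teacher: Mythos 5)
Your argument is correct and is essentially the paper's proof: the paper simply cites the exponential area growth $|\Sigma_t|=|\Sigma_0|\,e^t$ of the weak flow, combines it with the bound $\tfrac1C e^{-t/2}\le s\le Ce^{-t/2}$ from the preceding lemma and the conformal scaling $d\tilde\sigma=s^2\,d\sigma$, exactly as you do, and your derivation of the $O(e^t)$ area bound from the strictly outward minimizing property together with the barrier of Lemma \ref{weak.subsolution} is a harmless variant (the opening Hawking-mass/Gauss--Bonnet paragraph is unused and can be deleted). The only inaccuracy is the aside that ``jumps only decrease area'': ordinary jumps preserve area and obstacle jumps can increase it (by a bounded amount, finitely many times), but your main argument does not rely on this.
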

\begin{proof}
This follows immediate from the fact that $|\Sigma_t| =|\Sigma_0|\, e^t $, the definition of $\tilde{g}$, and the
previous Lemma.
 \end{proof}

\begin{lem}\label{HSquared-4}
$\int_{\Sigma_t}(H^2-4)$ is uniformly bounded above and below.
\end{lem}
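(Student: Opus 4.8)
The plan is to prove the two bounds separately; the upper bound is immediate from monotonicity, while the lower bound carries all the content. Solving the definition of the Hawking mass for the integral gives
\[ \int_{\Sigma_t}(H^2-4) = 16\pi(1-\gen) - 16\pi\,m_H(\Sigma_t)\sqrt{\tfrac{16\pi}{|\Sigma_t|}}. \]
By Corollary \ref{Geroch}, $m_H(\Sigma_t)\ge m_H(\partial_1 M)$, and since $|\Sigma_t|=|\Sigma_0|e^t\ge|\Sigma_0|$ one checks that $m_H(\Sigma_t)\sqrt{16\pi/|\Sigma_t|}\ge-|m_H(\partial_1 M)|\sqrt{16\pi/|\Sigma_0|}$; this yields the upper bound, with a constant depending only on $\gen$, $|\Sigma_0|$ and $m_H(\partial_1 M)$.

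For the lower bound I would work in the conformal compactification and exploit the fact that $\Sigma_t$ is squeezed between coordinate spheres with $\rho\sim e^{t/2}$. First I would record, from the asymptotics of $g$ (the same type of computation as in Lemma \ref{RicciAsymptotics}), the expansions $\Delta_g\log\rho=2+O(\rho^{-2})$, $\hess_g(\log\rho)(\nu,\nu)=\sin^2\theta+O(\rho^{-2})$ and $\langle\nu,\nabla\log\rho\rangle=\cos\theta\,(1+O(\rho^{-2}))$, where $\nu$ is the outward unit normal of $\Sigma_t$ and $\cos\theta=\langle\nu,\nabla\rho\rangle/|\nabla\rho|$. Substituting these into $\Delta_{\Sigma_t}\log\rho=\Delta_g\log\rho-\hess_g(\log\rho)(\nu,\nu)-H\langle\nu,\nabla\log\rho\rangle$, integrating over the closed surface $\Sigma_t$ (legitimate for the $C^{1,\alpha}$ weak flow, since it is just the divergence theorem $\int_{\Sigma_t}\mathrm{div}_{\Sigma_t}(\nabla\log\rho)=\int_{\Sigma_t}H\langle\nu,\nabla\log\rho\rangle$ for the generalized mean curvature), and using $\rho\sim e^{t/2}$ on $\Sigma_t$, $|\Sigma_t|=|\Sigma_0|e^t$, together with the already-established upper bound in the form $\int_{\Sigma_t}H^2\le 4|\Sigma_t|+C$ (whence $\int_{\Sigma_t}|H|\le(\int_{\Sigma_t}H^2)^{1/2}|\Sigma_t|^{1/2}=O(e^t)$) to absorb the $O(\rho^{-2})$ error terms, I expect to arrive at
\[ \int_{\Sigma_t}H\cos\theta = |\Sigma_t| + \beta_t + O(1),\qquad \beta_t:=\int_{\Sigma_t}\cos^2\theta\in[0,|\Sigma_t|]. \]

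The endgame is two applications of Cauchy--Schwarz. First, $(\int_{\Sigma_t}H\cos\theta)^2\le\beta_t\int_{\Sigma_t}H^2\le\beta_t(4|\Sigma_t|+C)$; combined with the identity above this forces $(|\Sigma_t|-\beta_t)^2\le C'|\Sigma_t|$, i.e.\ $\beta_t\ge|\Sigma_t|-C''|\Sigma_t|^{1/2}$, so $\Sigma_t$ is on average a coordinate sphere to leading order. Second, $\int_{\Sigma_t}H^2\ge(\int_{\Sigma_t}H\cos\theta)^2/\beta_t=(|\Sigma_t|+\beta_t+O(1))^2/\beta_t$; since $(|\Sigma_t|+\beta_t)^2/\beta_t=|\Sigma_t|^2/\beta_t+2|\Sigma_t|+\beta_t\ge 4|\Sigma_t|$ and $(|\Sigma_t|+\beta_t)/\beta_t\le 3$ for $t$ large, this gives $\int_{\Sigma_t}H^2\ge 4|\Sigma_t|-C$, which is the lower bound. (There are only finitely many obstacle jumps, so it suffices to argue for large $t$.)

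The main obstacle I anticipate is entirely on the lower-bound side: obtaining the expansions of $\Delta_g\log\rho$ and $\hess_g(\log\rho)(\nu,\nu)$ with errors that are genuinely $O(\rho^{-2})$ rather than merely $o(1)$, so that they survive integration against an area of size $e^t$; and, relatedly, sequencing the argument so that the upper bound is invoked first to control both the error terms and the first Cauchy--Schwarz step before the lower bound is deduced. Note that the hypothesis $\bar m\le 0$ is not used anywhere in this lemma.
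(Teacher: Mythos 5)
Your proof is correct, and the upper bound is exactly the paper's argument (Geroch monotonicity together with $|\Sigma_t|=|\Sigma_0|e^t$); for the lower bound, however, you take a genuinely different route. The paper uses the Gauss equation $2|\mathring{A}|^2+4K=H^2+2R-4\ric(\nu,\nu)$ and the conformal invariance of $\int_{\Sigma_t}\bigl(2|\mathring{A}|^2+4K\bigr)$ to transfer the integral to the compactified metric $\tilde g=\rho^{-2}g$, where the curvature and $|\tilde\Sigma_t|$ are uniformly bounded; dropping the nonnegative $\int_{\tilde\Sigma_t}\tilde H^2$ term gives $\int_{\Sigma_t}(H^2-4)\ge-C$ in two lines, and records along the way the inequality (\ref{integralHsquared}), $\int_{\tilde\Sigma_t}\tilde H^2\le C+\int_{\Sigma_t}(H^2-4)$, which is reused in Lemmas \ref{tangential-s} and \ref{AreaConvergence}. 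You instead stay in the physical metric: the first-variation identity applied to $\nabla\log\rho$ gives $\int_{\Sigma_t}H\cos\theta=|\Sigma_t|+\int_{\Sigma_t}\cos^2\theta+O(1)$, and two applications of Cauchy--Schwarz then yield first $\int_{\Sigma_t}\cos^2\theta\ge|\Sigma_t|-C|\Sigma_t|^{1/2}$ and then $\int_{\Sigma_t}H^2\ge4|\Sigma_t|-C$. This is sound: your expansions $\Delta_g\log\rho=2+O(\rho^{-2})$, $\hess_g\log\rho(\nu,\nu)=\sin^2\theta+O(\rho^{-2})$, $|\nabla\rho|/\rho=1+O(\rho^{-2})$ hold exactly (with error $\hat k\rho^{-2}$) for the background metric $b$, and the $O(\rho^{-3})$ perturbation only improves them, so they are no harder than what the paper already invokes in Lemmas \ref{weak.subsolution} and \ref{RicciAsymptotics}; the divergence-theorem step is legitimate for the $C^{1,\alpha}$ weak-flow surfaces with their generalized mean curvature, exactly as the paper uses the analogous identity in Lemma \ref{tangential-s}; and your sequencing (upper bound first, used to absorb the error terms via $\int|H|=O(e^t)$ and to run the first Cauchy--Schwarz) is consistent, with small $t$ handled trivially since $\int(H^2-4)\ge-4|\Sigma_t|$. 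As for what each approach buys: the paper's is shorter and its intermediate inequality (\ref{integralHsquared}) feeds directly into later lemmas (if your proof were substituted, that inequality would still be wanted, though it follows from the same conformal-invariance computation once the present lemma is known); yours avoids the compactification, proves the sharper quantitative bound $\int H^2\ge4|\Sigma_t|-C$, and gives as a by-product the $L^2$-closeness of $\nu$ to the radial direction, information of the kind the paper only extracts later (Lemma \ref{AreaConvergence}) by other means. You are also right that $\bar m\le0$ plays no role in this lemma.
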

\begin{proof}
The upper bound follows easily from Corollary \ref{Geroch}: Since
\[m_H(\Sigma_0)\le  m_H(\Sigma_t) = \sqrt{\frac{|\Sigma_t|}{16\pi} }\left(1-\gen-\frac{1}{16\pi}\int_{\Sigma_t}(H^2-4)\right),\]
we have
\[ \int_{\Sigma_t}(H^2-4) \le   16\pi\left(1-\gen+ m_H(\Sigma_0)\sqrt{\frac{16\pi}{|\Sigma_t|} }\right)=16\pi(1-\gen)+O(e^{-t/2}).\]

To prove the lower bound, we consider the Gauss-Codazzi equations in $(M,g)$:
\begin{equation}\label{Gauss}
2|\AA|^2+4K = H^2 +2R - 4\ric(\nu,\nu)
\end{equation}

Performing the same computation in the compactified metric $\tilde{g}$ and using the fact that integral of the left-hand side is conformally invariant, we see that
\[ \int_{\Sigma_t} (H^2 +2R - 4\ric(\nu,\nu))= \int_{\tilde{\Sigma}_t} (\tilde{H}^2 +2\tilde{R} - 4\widetilde{\ric}(\tilde{\nu},\tilde{\nu})).\]
By Lemma \ref{RicciAsymptotics}, we see that
\begin{align}
 \int_{\Sigma_t} (H^2-4) &=\int_{\tilde{\Sigma}_t} (\tilde{H}^2 +2\tilde{R} - 4\widetilde{\ric}(\tilde{\nu},\tilde{\nu})) + O(e^{-t/2}) \nonumber \\
 &\ge-C+\int_{\tilde{\Sigma}_t} \tilde{H}^2  \label{integralHsquared} 
 \ge -C,
 \end{align}
for some constant $C$ independent of $t$, where we used the fact that the curvature of $\tilde{g}$ is bounded.
\end{proof}

\begin{lem}\label{tangential-s}
We have
\[\int_{\tilde{\Sigma}_t} |\tilde{\nabla}^T s|^2=O(e^{-t/2}).\]
\end{lem}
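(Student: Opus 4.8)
The plan is to estimate the tangential gradient $\tilde\nabla^T s$ of the $s$-coordinate on the evolving surfaces $\tilde\Sigma_t$ by comparing it with the full gradient $\tilde\nabla s$, which by the near-boundary normal form $\tilde g = ds^2+\hat g+\tilde Q$ is essentially the unit field $\partial_s$. First I would write the orthogonal decomposition $\tilde\nabla s = (\tilde\nabla s)^\perp + \tilde\nabla^T s$ with respect to $\tilde\Sigma_t$, so that $|\tilde\nabla^T s|^2 = |\tilde\nabla s|^2 - \langle\tilde\nabla s,\tilde\nu\rangle^2 = 1 - \langle\partial_s,\tilde\nu\rangle^2 + O(s^2)$, using $|\tilde\nabla s|^2 = 1 + O(s^2) = 1+O(e^{-t})$ from the asymptotics of $\tilde Q$. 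Thus, up to the uniformly small error $O(e^{-t})\cdot|\tilde\Sigma_t| = O(e^{-t})$ (the area being bounded by the earlier lemma), the integral to be controlled is $\int_{\tilde\Sigma_t}(1-\langle\partial_s,\tilde\nu\rangle^2)$, which measures how far $\tilde\Sigma_t$ is from being a level set of $s$ — equivalently, from being a coordinate sphere $\{\rho=\text{const}\}$.

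The key input is the monotonicity/Gauss-Codazzi machinery already assembled in the excerpt. From the lower-bound computation in Lemma~\ref{HSquared-4}, namely inequality~(\ref{integralHsquared}), together with the upper bound on $\int_{\Sigma_t}(H^2-4)$, one extracts that $\int_{\tilde\Sigma_t}\tilde H^2$ is uniformly bounded, and moreover — tracking the Geroch monotonicity formula (Theorem~\ref{MassDifference1}) over a long time interval and using that $m_H(\Sigma_t)$ is bounded (Corollary~\ref{Geroch} plus Lemma~\ref{HSquared-4}) — that the integrand terms $|\AA|^2$ and $4H^{-2}|\nabla H|^2$, when weighted by $|\Sigma_t|^{1/2}\sim e^{t/2}$ and integrated in $t$, have finite total contribution. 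In particular $\int_{\tilde\Sigma_t}\tilde H^2$ and the Willmore-type quantities are not merely bounded but small in an averaged sense. The plan is then: since $\tilde\Sigma_t$ lives in the thin collar $s\in(\tfrac1Ce^{-t/2},Ce^{-t/2})$ where $\tilde g$ is $C^2$-close to the product $ds^2+\hat g$, a surface in this collar with uniformly bounded $\int\tilde H^2$ and bounded area must, by a compactness/rigidity argument, be graphical over a slice $\{s=\text{const}\}$ with small $C^1$ norm; this forces $\langle\partial_s,\tilde\nu\rangle^2 = 1 - O(e^{-t/2})$ pointwise in an $L^1$-averaged sense, giving the claimed $O(e^{-t/2})$.

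The cleanest route, and the one I expect the authors take, avoids the compactness argument: integrate the identity $\tilde\Delta_{\tilde\Sigma_t} s = \widetilde{\mathrm{div}}_{\tilde\Sigma_t}(\tilde\nabla^T s) = \tilde\Delta_{\tilde g} s - \tilde\nabla^2 s(\tilde\nu,\tilde\nu) - \tilde H\,\langle\tilde\nabla s,\tilde\nu\rangle$ over $\tilde\Sigma_t$. The boundary term vanishes (the surfaces are closed), $\tilde\Delta_{\tilde g}s$ and $\tilde\nabla^2 s$ are $O(s)=O(e^{-t/2})$ by the collar asymptotics, so integrating by parts gives $\int_{\tilde\Sigma_t}\langle\tilde\nabla^T s, \tilde\nabla^T f\rangle$-type control for suitable test functions; taking the test function to be $s$ itself yields $\int_{\tilde\Sigma_t}|\tilde\nabla^T s|^2 = -\int_{\tilde\Sigma_t} s\,(\tilde\Delta_{\tilde g}s - \tilde\nabla^2 s(\tilde\nu,\tilde\nu) - \tilde H\langle\tilde\nabla s,\tilde\nu\rangle)$, and each term on the right is $O(e^{-t/2})$ after Cauchy-Schwarz, using the bound $s = O(e^{-t/2})$, the area bound, and the uniform bound on $\|\tilde H\|_{L^2(\tilde\Sigma_t)}$ from~(\ref{integralHsquared}). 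The main obstacle is making the integration-by-parts rigorous: $\tilde\Sigma_t = \Sigma_t$ is only $C^{1,\alpha}$ and may change topology at jump times, so one must either work on the smooth pieces between jumps and check that the jumps (area being preserved, and $s$ being continuous) do not spoil the estimate, or invoke the approximation scheme of Huisken-Ilmanen by which $\Sigma_t$ arises as a limit of smooth elliptic-regularization flows and pass the estimate to the limit.
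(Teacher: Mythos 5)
Your ``cleanest route'' paragraph is exactly the paper's proof: integrate by parts to get $\int_{\tilde\Sigma_t}|\tilde\nabla^T s|^2=-\int_{\tilde\Sigma_t} s\,\Delta_{\tilde\Sigma_t}s$, expand the surface Laplacian into ambient Laplacian, normal Hessian, and mean curvature terms, and bound these by $O(e^{-t})+O(e^{-t/2})\bigl(\int_{\tilde\Sigma_t}\tilde H^2\bigr)^{1/2}$ using $s=O(e^{-t/2})$, the area bound, Cauchy--Schwarz, and the uniform $L^2$ bound on $\tilde H$ from (\ref{integralHsquared}) and Lemma \ref{HSquared-4}. The preliminary decomposition/compactness sketch in your first two paragraphs is unnecessary, and the regularity caveat you raise is handled by the Huisken--Ilmanen weak framework (the flow surfaces are $C^{1,\alpha}$ with weak mean curvature, for which this integration by parts is valid), so your final argument coincides with the paper's.
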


\begin{proof}

\begin{align*}
 \int_{\tilde{\Sigma}_t} |\tilde{\nabla}^T s|^2
 &= -\int_{\tilde{\Sigma}_t} s \Delta_{\tilde{\Sigma}_t} s 
= \int_{\tilde{\Sigma}_t}s (\Delta_{\tilde{g}}s-\tilde{\nabla}_{\tilde{\nu}}\tilde{\nabla}_{\tilde{\nu}}s +\langle \tilde{H}, \partial_s\rangle)\\
&\leq \int_{\tilde{\Sigma}_t} s |\Delta_{\tilde{g}}s-\tilde{\nabla}_{\tilde{\nu}}\tilde{\nabla}_{\tilde{\nu}}s | +  \int_{\tilde{\Sigma}_t} s|\tilde{H}| \cdot|\tilde{\nabla} s| \\
&=  \int_{\tilde{\Sigma}_t} O(s^2) +  \int_{\tilde{\Sigma}_t} |\tilde{H}|\cdot O(s)\\
&\leq  
 O(e^{-t}) + O(e^{-t/2}) \left( \int_{\tilde{\Sigma}_t} \tilde{H}^2\right)^{1/2},
\end{align*}
where we used the H\"{o}lder inequality in the last line. The result now follows because inequality (\ref{integralHsquared}) states that
\[ \int_{\tilde{\Sigma}_t} \tilde{H}^2\le C+\int_{\Sigma_t} (H^2-4),\]
while Lemma \ref{HSquared-4} implies that the right-hand side is bounded. 
\end{proof}

\begin{lem}\label{AreaConvergence}
$ |\tilde{\Sigma}_{t}| =  |\hat{\Sigma}| +O(e^{-t/2}).$
\end{lem}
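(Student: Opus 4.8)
The plan is to show that the area of $\tilde{\Sigma}_t$, computed in the compactified metric $\tilde{g} = ds^2 + \hat{g} + \tilde{Q}$, converges to $|\hat{\Sigma}|_{\hat{g}}$. The geometric picture is that $\tilde{\Sigma}_t$ lives in a thin slab $\{ \tfrac{1}{C}e^{-t/2} < s < Ce^{-t/2} \}$ that is collapsing onto the slice $\{s=0\} = \hat{\Sigma}$, and Lemma \ref{tangential-s} says that $\tilde{\Sigma}_t$ is becoming nearly ``horizontal'' in an integrated sense, so that projection onto $\hat{\Sigma}$ along the $s$-direction is nearly area-preserving and nearly a bijection.

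First I would set up the projection $\pi \colon (0,s_1)\times\hat{\Sigma} \to \hat{\Sigma}$. For the area, I would write the area form of $\tilde{\Sigma}_t$ in terms of the pulled-back area form of $\hat{\Sigma}$: since $\tilde{g}$ differs from $ds^2 + \hat{g}$ by $\tilde{Q} = O_2(s^2) = O(e^{-t})$ on $\tilde{\Sigma}_t$, up to a multiplicative error $1+O(e^{-t})$ we may compute everything with the product metric $ds^2 + \hat{g}$. With respect to the product metric, the area element of a graph-like surface over $\hat{\Sigma}$ is $\sqrt{1 + |\tilde{\nabla}^T s|^{-2}\cdot(\text{stuff})}$—more precisely, the ratio between the induced area form and $\pi^*(d\hat{g})$ is controlled by the tilt, i.e.\ by $|\tilde{\nabla}^T s|$. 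The cleaner route is the coarea/Cauchy-Schwarz estimate: $\big| |\tilde{\Sigma}_t| - \int_{\tilde{\Sigma}_t} (\text{horizontal area density}) \big|$ is bounded by a multiple of $\int_{\tilde{\Sigma}_t}|\tilde{\nabla}^T s|^2$ (using that the Jacobian of $\pi$ restricted to $\tilde{\Sigma}_t$ is $1 + O(|\tilde{\nabla}^T s|^2)$ wherever $\tilde{\Sigma}_t$ is a graph), which is $O(e^{-t/2})$ by Lemma \ref{tangential-s}. Then I would use that $\int_{\tilde{\Sigma}_t}(\text{horizontal area density}) = \sum (\text{multiplicity}) \cdot |\hat{\Sigma}|_{\hat g}$, i.e.\ it equals the degree of $\pi|_{\tilde{\Sigma}_t} \colon \tilde{\Sigma}_t \to \hat{\Sigma}$ times $|\hat{\Sigma}|_{\hat g}$.

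It then remains to pin down that this degree is exactly $1$. Here I would invoke the topology already established: by Lemma \ref{topology} applied to $\hat{\Sigma}$ (as noted in the remark following that lemma) and Lemma \ref{topology} itself, $\Sigma_t$ is connected with the same genus as $\partial_1 M$ and as $\hat{\Sigma}$, and $\Sigma_t$ is homologous to $\hat{\Sigma}$ in the region it bounds together with $\hat{\Sigma}$; hence $\pi|_{\tilde{\Sigma}_t}$ has degree $1$ (it is a degree-one map between closed surfaces of the same genus), so $\int_{\tilde{\Sigma}_t}(\text{horizontal area density}) = |\hat{\Sigma}|_{\hat g} + o(1)$. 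Combining with the tilt estimate gives $|\tilde{\Sigma}_t| = |\hat{\Sigma}|_{\hat g} + O(e^{-t/2})$, and since the normalization conventions fix $|\hat{\Sigma}|_{\hat g}$ (e.g.\ $4\pi$ when $\hat k = 0$), this is the claim.

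The main obstacle is making the ``graph over $\hat{\Sigma}$'' picture rigorous without assuming a priori that $\tilde{\Sigma}_t$ actually is a graph: in principle $\tilde{\Sigma}_t$ could fold over $\hat{\Sigma}$, so the projection $\pi|_{\tilde{\Sigma}_t}$ need not be injective and $|\tilde{\nabla}^T s|$ need not be small pointwise—only $\int |\tilde{\nabla}^T s|^2$ is small. The way around this is to avoid pointwise graph statements entirely and argue only with integral quantities: express $|\tilde{\Sigma}_t|$ via the area formula for $\pi|_{\tilde{\Sigma}_t}$, bound the difference between the area and the (multiplicity-weighted) horizontal area by $\int_{\tilde{\Sigma}_t} |\tilde{\nabla}^T s|^2$ plus the $O(e^{-t})$ from $\tilde{Q}$, and separately use the homological degree computation to evaluate the multiplicity-weighted horizontal integral. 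One should also note a lower bound $|\tilde{\Sigma}_t| \ge |\hat{\Sigma}|_{\hat g}(1 - o(1))$ is automatic from degree one, so the content is the matching upper bound, which is exactly what the tilt estimate of Lemma \ref{tangential-s} delivers.
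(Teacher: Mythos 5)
There is a genuine gap at the step where you ``evaluate the multiplicity-weighted horizontal integral'' by the degree of $\pi|_{\tilde{\Sigma}_t}$. The area formula expresses $\int_{\tilde{\Sigma}_t}(\text{horizontal area density})$ as $\int_{\hat{\Sigma}} \#\bigl(\pi|_{\tilde{\Sigma}_t}^{-1}(y)\bigr)\,d\hat{g}(y)$, i.e.\ with the \emph{unsigned} multiplicity, whereas the homological/degree computation only controls the \emph{signed} count. The unsigned count dominates the degree; it is not equal to it when the surface folds, so degree one gives you the lower bound $\int(\text{horizontal density})\ge|\hat{\Sigma}|$ (the easy direction, which the paper dispatches by noting the projection is area-nonincreasing), but not the upper bound you actually need. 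Moreover, the tilt estimate of Lemma \ref{tangential-s} cannot close this: a folded configuration consisting of, say, three nearly horizontal sheets over a set of unit measure in $\hat{\Sigma}$ has $|\tilde{\nabla}^T s|$ tiny everywhere and signed degree one, yet contributes nearly $3|\hat{\Sigma}|$ to the area. So ``small integrated tilt $+$ degree one'' is simply not enough information to conclude $|\tilde{\Sigma}_t|\le|\hat{\Sigma}|+O(e^{-t/2})$.

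What is missing is the ingredient the paper uses to kill exactly these folds: the weak IMCF surfaces are mean convex, $H\ge 0$, and under the conformal change $g=s^{-2}\tilde{g}$ one has $H=s\tilde{H}+2\tilde{\nu}(s)$, hence $\tilde{\nu}(s)\ge-\tfrac{s\tilde{H}}{2}$. On a folded-back sheet the (inward) normal has $\tilde{\nu}(s)$ close to $-1$, which forces $\tilde{H}\gtrsim e^{t/2}$ there; since $\int_{\tilde{\Sigma}_t}\tilde{H}^2$ is uniformly bounded (inequality (\ref{integralHsquared}) together with Lemma \ref{HSquared-4}), the set $A_t$ where $\tilde{\nu}(s)\le-e^{-t/4}$ has area $O(e^{-t/2})$. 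This is what lets the paper pass from the \emph{signed} flux identity $\int_{\tilde{\Sigma}_t}\tilde{\nu}(s)=|\hat{\Sigma}|+O(e^{-t})$ (divergence theorem) to the unsigned bound on $\int_{\tilde{\Sigma}_t}|\tilde{\nu}(s)|^2$ and hence on the area. Your reduction of $|\tilde{\Sigma}_t|$ to the horizontal integral up to $O\bigl(\int|\tilde{\nabla}^T s|^2\bigr)+O(e^{-t})$ is fine, and your lower bound is fine, but the upper bound needs the mean-convexity argument (or some substitute ruling out nearly horizontal folds); without it the proof does not go through.
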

\begin{proof}
Choose $\tilde{\nu}$ to be the \emph{inward} pointing normal of $\tilde{\Sigma}_t$. We will first show that
\[ |\tilde{\Sigma}_{t}| \le  |\hat{\Sigma}| +O(e^{-t/2}).\]
Using the fact that $s$ is approximately a distance function with respect to $\tilde{g}$, 
\begin{multline*}
|\tilde{\Sigma}_t|  = \int_{\tilde{\Sigma}_t} (|\tilde{\nabla}s|^2 + O(s^2)) 
=\int_{\tilde{\Sigma}_t} (|\tilde{\nabla}^T s|^2+ |\tilde{\nabla}^N s|^2) + O(e^{-t}) \\
=\int_{\tilde{\Sigma}_t}  |\tilde{\nu}(s)|^2 + O(e^{-t/2}),
\end{multline*}
where the last line follows from Lemma \ref{tangential-s}. So it suffices to estimate $\int_{\tilde{\Sigma}_t}  |\tilde{\nu}(s)|^2$ in terms of $|\hat{\Sigma}|$.

We now divide $\tilde{\Sigma}_t$ into three parts: 
$$
A_t = \{x \in \tilde{\Sigma}_t\,|\,\tilde{\nu}(s) \le -e^{-t/4}\},\quad B_t = \{x \in \tilde{\Sigma}_t\,|\, -e^{-t/4}<\tilde{\nu}(s) \le 0\}$$
and
$$C_t = \{x \in \tilde{\Sigma}_t\,|\, 0<\tilde{\nu}(s) \}.$$
Then
\begin{align*}
\int_{\tilde{\Sigma}_t} |\tilde{\nu}(s)|^2
&= \int_{A_t} |\tilde{\nu}(s)|^2+\int_{B_t} |\tilde{\nu}(s)|^2+\int_{C_t} |\tilde{\nu}(s)|^2\\
&\le(1+O(e^{-t})) |A_t|  +e^{-t/2} |B_t| + (1+O(e^{-t}))\int_{C_t} \tilde{\nu}(s)\\
&\le  |A_t|  + \int_{\tilde{\Sigma}_t} \tilde{\nu}(s) + O(e^{-t/2})\\
\end{align*}
If we take $\tilde{M}_t$ to be the region of $(0,s_1)\times\hat{\Sigma}$ so that $\partial M_t=\{0\}\times \hat\Sigma\cup\Sigma_t$, then the divergence theorem tells us that
$$
\int_{\tilde{\Sigma}_t} \tilde{\nu}(s)=  \int_{\{0\}\times\hat{\Sigma}} \tilde{\nu}(s) + \int_{\tilde{M}_t} \tilde{\Delta} s
=|\hat{\Sigma}|+O(e^{-t}).
$$
Thus 
\begin{equation}\label{EstimateInTermsOfA}
\int_{\tilde{\Sigma}_t} |\tilde{\nu}(s)|^2 \le |A_t|  + |\hat{\Sigma}|+O(e^{-t/2}).
\end{equation}

It remains to estimate $|A_t|$. The mean curvature changes under under the conformal change $g=s^{-2}\tilde{g}$ according to the formula
\[ H = s\tilde{H}+2\tilde{\nu}(s). \]
Since $\Sigma_t$ evolves by inverse mean curvature flow, we know that $H\geq 0$. Therefore
\[ \tilde{\nu}(s)\geq-\frac{s\tilde{H}}{2}.\]
So the definition of $A_t$ tells us that $-e^{-t/4}\ge \tilde{\nu}(s)\geq-\frac{s\tilde{H}}{2}$ on $A_t$. Squaring this and integrating over $A_t$ gives
$$
\int_{A_t} e^{-t/2} \geq  \int_{A_t}\frac{s^2 \tilde{H}^2}{4} \implies
|A_t|  \leq  O(e^{-t/2}) \int_{\tilde{\Sigma}_t} \tilde{H}^2.
$$
As mentioned in the proof of Lemma \ref{tangential-s}, $\int_{\tilde{\Sigma}_t} \tilde{H}^2$ is bounded, 
and therefore this inequality combined with inequality (\ref{EstimateInTermsOfA}) gives us
\[\int_{\tilde{\Sigma}_t} |\tilde{\nu}(s)|^2 \le  |\hat{\Sigma}|+O(e^{-t/2}),\]
completing the proof of one side of the inequality.

The reverse inequality follows from the fact that $|\tilde{\Sigma}_t|$ is within error $O(e^{-t})$ from the area of $\tilde{\Sigma}_t$ as measured in the product metric $ds^2+\hat{g}$ on $(0,s_1)\times\hat{\Sigma}$, and the projection map of the product metric onto $\hat{\Sigma}$ is area-nonincreasing.
\end{proof}

\begin{lem}\label{tracefreeA}
There exists a sequence of times $t_i$ such that
\[\lim_{i\to\infty} \int_{\Sigma_{t_i}} |\AA|^2 = 0.\]
\end{lem}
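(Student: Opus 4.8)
The plan is to extract the conclusion from the Geroch monotonicity inequality in Theorem~\ref{MassDifference1} by a pigeonhole/averaging argument in time, exactly as in the asymptotically flat case. First I would recall that by Corollary~\ref{Geroch} the Hawking mass $m_H(\Sigma_t)$ is nondecreasing, and by Lemma~\ref{HSquared-4} (upper bound) together with Lemma~\ref{AreaConvergence} the quantity $m_H(\Sigma_t)$ is bounded above; hence $m_H(\Sigma_t)$ converges to a finite limit as $t\to\infty$. Consequently the total increment $m_H(\Sigma_\eta)-m_H(\Sigma_\xi)$ over any time interval tends to $0$ as $\xi,\eta\to\infty$, and in particular the right-hand side of the monotonicity inequality (\ref{MassDifference}), which is a sum of nonnegative terms (using $R\ge-6$ and $\hat\chi\ge\chi(\Sigma_t)$ from Lemma~\ref{topology}), must have its integrand go to zero in an integrated sense.

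Next I would isolate the term we care about. Since there are only finitely many obstacles, for $t$ larger than some $t_*$ there are no obstacles between $\Sigma_t$ and later surfaces, so (\ref{MassDifference}) applies on any interval $[\xi,\eta]\subset[t_*,\infty)$, giving
\[
\tfrac{1}{2}(16\pi)^{-3/2}\int_\xi^\eta |\Sigma_t|^{1/2}\!\left(\int_{\Sigma_t}|\AA|^2\right)dt \;\le\; m_H(\Sigma_\eta)-m_H(\Sigma_\xi)\;\longrightarrow\;0.
\]
Because $|\Sigma_t|=|\Sigma_0|e^t\to\infty$, the weight $|\Sigma_t|^{1/2}$ is bounded below by a positive constant for large $t$, so $\int_\xi^\eta\!\big(\int_{\Sigma_t}|\AA|^2\big)dt\to0$ as $\xi\to\infty$. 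A nonnegative function whose tail integrals vanish cannot be bounded away from zero along any sequence going to infinity; hence there is a sequence $t_i\to\infty$ with $\int_{\Sigma_{t_i}}|\AA|^2\to0$, which is the claim. (One could alternatively say $\liminf_{t\to\infty}\int_{\Sigma_t}|\AA|^2=0$ and pick $t_i$ realizing it.)

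There is essentially no serious obstacle here; the only points requiring a word of care are (i) confirming that $m_H(\Sigma_t)$ is bounded above — this is where Lemma~\ref{HSquared-4} and Lemma~\ref{AreaConvergence} enter, since $m_H(\Sigma_t)=\sqrt{|\Sigma_t|/16\pi}\,(1-\gen-\tfrac1{16\pi}\int_{\Sigma_t}(H^2-4))$ and the lower bound on $\int(H^2-4)$ together with $|\tilde\Sigma_t|$ converging forces the prefactor times the bracket to stay bounded; and (ii) making sure the obstacle jumps do not spoil the argument, which is handled by restricting to $t\ge t_*$ after the last jump. The argument is the direct analogue of Huisken--Ilmanen's derivation that $|\AA|\to0$ along a subsequence, so I would keep the exposition brief and cite the relevant lemmas above rather than reproving the monotonicity bookkeeping.
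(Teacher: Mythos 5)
Your overall strategy---difference a monotone quantity in time, observe that the nonnegative $\int_{\Sigma_t}|\AA|^2$ contribution is controlled by the total variation, and extract a sequence $t_i$ by a tail argument---is the right shape, but the specific quantity you chose breaks the proof. The step ``by Lemma~\ref{HSquared-4} (upper bound) together with Lemma~\ref{AreaConvergence}, $m_H(\Sigma_t)$ is bounded above'' is not justified. Since $m_H(\Sigma_t)=\sqrt{|\Sigma_t|/16\pi}\,\bigl(1-\gen-\tfrac{1}{16\pi}\int_{\Sigma_t}(H^2-4)\bigr)$ and the prefactor grows like $e^{t/2}$, an upper bound on $m_H$ requires a \emph{lower} bound on $\int_{\Sigma_t}(H^2-4)$ of the sharp form $16\pi(1-\gen)-O(e^{-t/2})$; Lemma~\ref{HSquared-4} only provides $\int_{\Sigma_t}(H^2-4)\ge -C$, which gives the useless bound $m_H(\Sigma_t)\le C' e^{t/2}$ (the \emph{upper} bound in Lemma~\ref{HSquared-4} controls $m_H$ from \emph{below}, being itself a consequence of Geroch monotonicity), and Lemma~\ref{AreaConvergence} concerns the compactified area $|\tilde\Sigma_t|$ and does not touch the bracket. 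Worse, finiteness of $\lim_{t\to\infty}m_H(\Sigma_t)$ is precisely what the paper only establishes in Lemma~\ref{2/3-power}, whose proof runs through Lemma~\ref{euler.no.jump} and hence through the very sequence $t_i$ that the present lemma produces; so assuming that $m_H$ is bounded above at this stage is circular, and it is exactly the subtle point of the whole argument (in \cite{Neves:2010}-type examples the limit need not behave as expected).

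The repair is the paper's route: difference $\int_{\Sigma_t}(H^2-4)$ instead of the Hawking mass, since for this quantity two-sided uniform bounds are already available (Lemma~\ref{HSquared-4}). Huisken--Ilmanen's evolution inequality (5.22), after discarding the gradient term, writing $2|A|^2-H^2=2|\AA|^2$, and using $\frac{d}{dt}|\Sigma_t|=|\Sigma_t|$, gives
\begin{equation*}
\int_{\Sigma_\xi}(H^2-4)\ \ge\ \int_{\Sigma_\eta}(H^2-4)+\int_\xi^\eta\int_{\Sigma_t}\bigl(2|\AA|^2+2(\ric(\nu,\nu)+2)\bigr)\,dt,
\end{equation*}
and by Lemma~\ref{RicciAsymptotics} the term $\int_\xi^\eta\int_{\Sigma_t}2(\ric(\nu,\nu)+2)\,dt$ is only $O(e^{-\xi/2})$. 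The uniform two-sided bound on $\int_{\Sigma_t}(H^2-4)$ then makes $\int_\xi^\infty\int_{\Sigma_t}2|\AA|^2\,dt$ finite, and your concluding tail argument (which is fine, as is your handling of the finitely many obstacle jumps) produces the sequence $t_i\to\infty$ with $\int_{\Sigma_{t_i}}|\AA|^2\to0$.
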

\begin{proof}
We use formula (5.22) from \cite{Huisken-Ilmanen:2001}: For $\xi<\eta$, we have 
\begin{align*}
 \int_{\Sigma_\xi} H^2 &\geq \int_{\Sigma_\eta} H^2 + \int_\xi^\eta\int_{\Sigma_t}\left(2\frac{|DH|^2}{H^2}+2|A|^2+2\ric(\nu,\nu)-H^2\right)\,dt\\
 &\geq  \int_{\Sigma_\eta} H^2 + \int_\xi^\eta\int_{\Sigma_t}(2|\AA|^2+2\ric(\nu,\nu))
 \end{align*}
Since $\frac{d}{dt}|\Sigma_t|=|\Sigma_t|$, it follows that 
\begin{align*}
\int_{\Sigma_\xi} (H^2-4) &\geq  \int_{\Sigma_\eta} (H^2-4) + \int_\xi^\eta\int_{\Sigma_t}(2|\AA|^2+2(\ric(\nu,\nu)+2))\,dt.
\end{align*}
The last term is easily bounded:
\begin{multline*}
 \int_\xi^\eta\int_{\Sigma_t}2(\ric(\nu,\nu)+2)\,dt= \int_\xi^\eta\int_{\Sigma_t} O(\rho^{-3})\,dt
 =\int_\xi^\eta O(e^{-t/2})\,dt\\
 =O(e^{-\xi/2})
 \end{multline*}
 Hence,
 \[ \int_\xi^\eta\int_{\Sigma_t}2|\AA|^2\,dt \le \int_{\Sigma_\xi} (H^2-4) -  \int_{\Sigma_\eta} (H^2-4) +O(e^{-\xi/2}).\]
Since $ \int_{\Sigma_\xi} (H^2-4)$ is bounded above and below by Lemma \ref{HSquared-4}, the integral on the left-hand side is bounded as $\eta\to\infty$, completing the proof.
\end{proof}

\begin{lem}\label{euler.no.jump}
Using the same sequence as in Lemma \ref{tracefreeA}, we have $\chi(\Sigma_{t_i})=\chi(\hat{\Sigma})$ for sufficiently large $i$.
\end{lem}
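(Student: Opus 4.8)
The plan is to show that the genus of $\Sigma_{t_i}$ is eventually constant and equal to that of $\hat\Sigma$; since $\Sigma_{t_i}$ is connected (Lemma \ref{topology}), this pins down $\chi(\Sigma_{t_i})$. We already know two one-sided bounds on the genus: Lemma \ref{topology} gives $\mathrm{genus}(\Sigma_t)\ge\gen=\mathrm{genus}(\partial_1M)$, and the remark following Lemma \ref{topology} gives $\mathrm{genus}(\hat\Sigma)\ge\gen$; moreover $\Sigma_t$ lies in the asymptotic region $(0,s_1)\times\hat\Sigma$ for large $t$, so it is isotopic there to a section over $\hat\Sigma$, which forces $\mathrm{genus}(\Sigma_t)\le\mathrm{genus}(\hat\Sigma)$ and hence $\chi(\Sigma_t)\ge\chi(\hat\Sigma)$ for all large $t$. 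So the content is the reverse inequality $\chi(\Sigma_{t_i})\le\chi(\hat\Sigma)$ along the special sequence, i.e.\ that the genus cannot stabilize strictly above that of $\hat\Sigma$.

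The first step is to feed the Gauss equation (\ref{Gauss}) through the Gauss--Bonnet theorem. Integrating $2K=H^2-2|\AA|^2+2R-4\ric(\nu,\nu)$ over $\Sigma_t$ and using $\int_{\Sigma_t}K=2\pi\chi(\Sigma_t)$, together with $R=-6+o(\rho^{-3})$ and $\ric(\nu,\nu)=-2+o(\rho^{-3})$ from Lemma \ref{RicciAsymptotics} (and the area bound $|\Sigma_t|=|\Sigma_0|e^t$, so the $o(\rho^{-3})$ terms integrate to $O(e^{-t/2})$), one obtains
\[
4\pi\chi(\Sigma_t)=\int_{\Sigma_t}(H^2-4)-\int_{\Sigma_t}|\AA|^2+O(e^{-t/2}).
\]
Along the sequence $t_i$ of Lemma \ref{tracefreeA} the middle term vanishes, so $\displaystyle 4\pi\chi(\Sigma_{t_i})=\int_{\Sigma_{t_i}}(H^2-4)+o(1)$. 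The second step is to control $\int_{\Sigma_{t_i}}(H^2-4)$ from above by $4\pi\chi(\hat\Sigma)$. Passing to the compactified metric $\tilde g$ as in Lemma \ref{HSquared-4}, $\int_{\Sigma_t}(H^2-4)=\int_{\tilde\Sigma_t}\tilde H^2+2\tilde R-4\widetilde{\ric}(\tilde\nu,\tilde\nu)+O(e^{-t/2})$, and since $\tilde g=ds^2+\hat g+O_2(s^2)$ the curvature terms $\int_{\tilde\Sigma_t}(2\tilde R-4\widetilde{\ric}(\tilde\nu,\tilde\nu))$ converge (using the area bound on $\tilde\Sigma_t$ and Lemma \ref{AreaConvergence}) to the corresponding integral over $\{0\}\times\hat\Sigma$, which by Gauss--Bonnet and the product structure equals $-8\pi\chi(\hat\Sigma)$ in the limit; here I expect the relevant statement to be an inequality $\limsup_i\int_{\tilde\Sigma_{t_i}}(2\tilde R-4\widetilde{\ric}(\tilde\nu,\tilde\nu))\le -8\pi\chi(\hat\Sigma)+o(1)$, derived by comparing $\tilde\Sigma_{t_i}$ with a graph over $\hat\Sigma$ using Lemma \ref{tangential-s} and Lemma \ref{AreaConvergence}. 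For the $\tilde H^2$ term I would use the conformal change formula $\tilde\nu(s)\ge -s\tilde H/2$ (mean convexity) as in the proof of Lemma \ref{AreaConvergence}: squaring and integrating, $\int_{\tilde\Sigma_{t_i}}\tilde H^2$ over the set where $\tilde\nu(s)<0$ is $O(e^{-t_i/2})\int\tilde H^2$, hence small, while on the complement $\tilde H$ is close to the mean curvature of a graph over $\hat\Sigma$, which is $o(1)$ in $L^2$; so $\int_{\tilde\Sigma_{t_i}}\tilde H^2\to 0$.

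Combining the two steps gives $4\pi\chi(\Sigma_{t_i})\le 4\pi\chi(\hat\Sigma)+o(1)$, and since $\chi$ is an even integer this forces $\chi(\Sigma_{t_i})\le\chi(\hat\Sigma)$ for large $i$; together with the a priori reverse inequality we conclude $\chi(\Sigma_{t_i})=\chi(\hat\Sigma)$. The main obstacle I anticipate is the rigorous justification that $\int_{\tilde\Sigma_{t_i}}\tilde H^2\to 0$ and that the curvature integrals converge to their values on $\hat\Sigma$: this is where one genuinely needs that $\tilde\Sigma_{t_i}$ is $C^{1,\alpha}$-close to a section of the product $(0,s_1)\times\hat\Sigma$, which in turn rests on Lemmas \ref{tangential-s}, \ref{AreaConvergence}, and the bound on $\int_{\tilde\Sigma_t}\tilde H^2$; the subsequence in Lemma \ref{tracefreeA} is essential precisely because only along it can one discard the $\int|\AA|^2$ term, and without it the genus could a priori oscillate.
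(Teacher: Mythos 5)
You have the two directions of the comparison reversed, and the direction you dismiss as soft topology is the actual content of the lemma. In the setting of Theorem \ref{penrose.thm} the conformal infinity $\hat\Sigma$ has genus $\gen$, so Lemma \ref{topology} already gives $\chi(\Sigma_t)\le\chi(\hat\Sigma)$ for every $t$ --- that is the easy direction, which the paper simply quotes. What has to be proved is $\chi(\Sigma_{t_i})\ge\chi(\hat\Sigma)$, i.e.\ that the genus of $\Sigma_{t_i}$ does not exceed that of $\hat\Sigma$. Your claim that this is automatic because $\Sigma_t$ lies in the end $(0,s_1)\times\hat\Sigma$ and is therefore ``isotopic to a section'' is not justified and is false in general: a connected embedded surface in a product $(0,s_1)\times\hat\Sigma$ that is homologous to a slice can have arbitrarily large genus (attach a small handle to a slice inside a ball), and nothing soft prevents the weak flow surfaces from being of this type. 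Ruling this out is precisely the analytic content of the lemma. As a consequence, the computation you then carry out --- an upper bound for $\int_{\Sigma_{t_i}}(H^2-4)$ in terms of $\chi(\hat\Sigma)$, via $\int_{\tilde\Sigma_{t_i}}\tilde H^2\to0$ --- is aimed at the inequality that is already known from Lemma \ref{topology}, so even if completed it would not prove the lemma. Moreover the mechanism you propose for $\int_{\tilde\Sigma_{t_i}}\tilde H^2\to 0$ does not work as stated: squaring $\tilde\nu(s)\ge-\tfrac{1}{2}s\tilde H$ on the bad set controls $\int|\tilde\nu(s)|^2$ by $O(e^{-t})\int\tilde H^2$, not $\int\tilde H^2$ by something small, and no $C^1$ graph-closeness of the $C^{1,\alpha}$ weak flow surfaces is available to make ``$\tilde H$ is close to the mean curvature of a graph'' precise.

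The correct route (the paper's) uses your first identity but with the opposite sign strategy. Integrating the Gauss--Codazzi equation in the compactified metric and using Gauss--Bonnet gives $\int_{\tilde\Sigma_t}2|\mathring{\tilde A}|^2+8\pi\chi(\Sigma_t)=\int_{\tilde\Sigma_t}\bigl(\tilde H^2+2\tilde R-4\widetilde{\ric}(\tilde\nu,\tilde\nu)\bigr)$. Since one wants a \emph{lower} bound on $\chi(\Sigma_{t_i})$, the term $\tilde H^2\ge0$ can simply be discarded (its sign is favorable, so there is no need to show it tends to zero); the trace-free term is removed along the subsequence by the conformal invariance of $\int|\AA|^2$ together with Lemma \ref{tracefreeA}; and the curvature integral is evaluated by decomposing $\tilde\nu$ into its $\tilde\nabla s$ and tangential parts, using Lemma \ref{tangential-s} to control the tangential contribution, and using the slice limit \eqref{gauss.limit} with Lemma \ref{AreaConvergence} to get $\int_{\tilde\Sigma_{t_i}}\bigl(2\tilde R-4\widetilde{\ric}(\tilde\nu,\tilde\nu)\bigr)\to 4\hat k\,|\hat\Sigma|=8\pi\chi(\hat\Sigma)$. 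Two smaller slips in your write-up: the Gauss equation \eqref{Gauss} yields $8\pi\chi(\Sigma_t)=\int_{\Sigma_t}(H^2-4)-2\int_{\Sigma_t}|\AA|^2+O(e^{-t/2})$, not $4\pi\chi(\Sigma_t)$ with coefficient $1$ on $|\AA|^2$, and the limit of the curvature integral is $+8\pi\chi(\hat\Sigma)$, not $-8\pi\chi(\hat\Sigma)$; exact constants matter here because the conclusion is an equality of integers.
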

\begin{proof}
Note that by Lemma \ref{topology}, we already know that $\chi(\Sigma_{t_i})\le\chi(\hat{\Sigma})$, so we need to show that $\chi(\Sigma_{t_i})\ge\chi(\hat{\Sigma})$ for all $i$ large.

The Gauss-Codazzi equations in the compactified metric tell us that
\[ 2|\mathring{\tilde{A}}|^2+4\tilde{K} = \tilde{H}^2 +2\tilde{R} - 4\widetilde{\ric}(\tilde{\nu},\tilde{\nu}),\]
where $\tilde \nu$ is the normal vector to the surface.
If we apply this to the surfaces $\{s\}\times \hat\Sigma$ we obtain that
\begin{equation}\label{gauss.limit}
\lim_{s\to 0}\,\left(2\tilde{R} - 4\widetilde{\ric}(\tilde\nabla s,\tilde \nabla s)\right)=4\hat k,
\end{equation}
where $\hat k$ is the constant Gaussian curvature of $(\hat \Sigma,\hat g)$.

Decompose
\[\tilde{\nu}=a\frac{\tilde{\nabla} s}{|\tilde{\nabla}s|} + v,\] 
so that $v$ is orthogonal to $\tilde{\nu}$, we have $|v|= \frac{|\tilde{\nabla}^T s|}{|\tilde{\nabla} s|}=|\tilde{\nabla}^T s|+O(s^2)$. 

Integrating Gauss-Codazzi equations on $\tilde\Sigma_t$ and recalling Lemma \ref{tangential-s} we obtain
\begin{multline*}
 \left(\int_{\tilde{\Sigma}_t} 2|\mathring{\tilde{A}}|^2\right)+8\pi\chi(\tilde{\Sigma}_t) = 
 \int_{\tilde{\Sigma}_t}  \tilde{H}^2 +2\tilde{R} - 4\widetilde{\ric}(\tilde{\nu},\tilde{\nu})
 \geq \int_{\tilde{\Sigma}_t} 2\tilde{R} - 4\widetilde{\ric}(\tilde{\nu},\tilde{\nu})\\
 = \int_{\tilde{\Sigma}_t} (2\tilde{R} - 4a^2\widetilde{\ric}(\tilde \nabla s,\tilde \nabla s)-8\widetilde{\ric}(\tilde\nabla s, v)-\widetilde{\ric}(v,v))+O(s^2)\\
 \geq  \int_{\tilde{\Sigma}_t} (2\tilde{R} - 4\widetilde{\ric}(\tilde \nabla s,\tilde \nabla s)-C(|\tilde\nabla^Ts|+|\tilde\nabla^Ts|^2))+O(s^2)\\
 = \int_{\tilde{\Sigma}_t} (2\tilde{R} - 4\widetilde{\ric}(\tilde \nabla s,\tilde \nabla s)) +O(e^{-t/4}).
\end{multline*}
Thus, using \eqref{gauss.limit}, Lemma \ref{AreaConvergence}, and  Lemma \ref{tracefreeA}, we obtain
$$\lim_{i\to\infty} 8\pi\chi(\tilde{\Sigma}_{t_i})\geq 4\hat k|\hat \Sigma|=8\pi \chi({\hat \Sigma}).$$
This implies $\chi(\tilde{\Sigma}_{t_i})\geq \chi({\hat \Sigma})$ for all $i$ sufficiently large.
\end{proof}

\begin{lem}\label{2/3-power}
If $\bar{m}\leq 0$, then 
\[ \lim_{t\to\infty} m_H(\Sigma_t) \leq 
 -\left( \fint_{\hat{\Sigma}} \mu^{2/3}\right)^{3/2}
\left( \frac{|\hat \Sigma|}{4\pi}\right)^{3/2}. \]
\end{lem}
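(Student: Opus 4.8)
The plan is to estimate $\lim_{t\to\infty} m_H(\Sigma_t)$ from above by carefully tracking all the terms in the Hawking mass through the conformal compactification, using the asymptotic results already established. Recall that
\[
m_H(\Sigma_t)=\sqrt{\frac{|\Sigma_t|}{16\pi}}\left(1-\gen-\frac{1}{16\pi}\int_{\Sigma_t}(H^2-4)\right).
\]
By Lemma \ref{AreaConvergence}, $|\tilde\Sigma_t|\to|\hat\Sigma|$ and $|\Sigma_t|=|\Sigma_0|e^t$; the factor $\sqrt{|\Sigma_t|/16\pi}$ blows up like $e^{t/2}$, so everything hinges on showing that the bracketed factor is $o(e^{-t/2})$ plus a nonpositive limiting contribution. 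First I would use the Gauss--Codazzi identity \eqref{Gauss} together with Lemma \ref{RicciAsymptotics}, as in the proof of Lemma \ref{HSquared-4}, to write
\[
\int_{\Sigma_t}(H^2-4)=\int_{\tilde\Sigma_t}\left(\tilde H^2+2\tilde R-4\widetilde{\ric}(\tilde\nu,\tilde\nu)\right)+O(e^{-t/2}),
\]
and then carry the $-\frac{1}{16\pi}$ factor inside. I would work along the sequence $t_i$ supplied by Lemma \ref{tracefreeA}, so that $\int_{\tilde\Sigma_{t_i}}|\mathring{\tilde A}|^2\to0$, and by Lemma \ref{euler.no.jump}, $\chi(\Sigma_{t_i})=\chi(\hat\Sigma)$ for large $i$.

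The heart of the argument is to evaluate $\int_{\tilde\Sigma_{t_i}}(2\tilde R-4\widetilde{\ric}(\tilde\nu,\tilde\nu))$ and $\int_{\tilde\Sigma_{t_i}}\tilde H^2$ precisely enough to extract the mass aspect. For the Ricci/scalar term I would decompose $\tilde\nu = a\,\tilde\nabla s/|\tilde\nabla s| + v$ exactly as in Lemma \ref{euler.no.jump}, control the cross and tangential terms by $\int|\tilde\nabla^T s|^2=O(e^{-t/2})$ (Lemma \ref{tangential-s}), and invoke the refined asymptotics of $\widetilde{\ric}(\tilde\nabla s,\tilde\nabla s)$ and $\tilde R$ coming from Lemma \ref{RicciAsymptotics} transported to the compactified metric: the subleading term there is governed by $\mu\rho^{-3}=\mu s^3$, and integrating against the area element on $\tilde\Sigma_{t_i}$ (whose $s$-coordinate is $\asymp e^{-t_i/2}$) produces the mass-aspect contribution. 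The combination $2\tilde R-4\widetilde{\ric}$ should give $4\hat k$ to leading order plus a term of the form $(\text{const})\mu s$ at the next order, while $\tilde H^2$ — using $H\ge0$ and $H=s\tilde H+2\tilde\nu(s)$, i.e. $\tilde H\ge -2\tilde\nu(s)/s$ — should contribute something comparable. Pulling these together with the Gauss--Bonnet term $8\pi\chi(\hat\Sigma)=4\hat k|\hat\Sigma|$ and Lemma \ref{AreaConvergence}, the bracketed factor $1-\gen-\frac{1}{16\pi}\int_{\Sigma_{t_i}}(H^2-4)$ should reduce, after multiplication by $\sqrt{|\Sigma_{t_i}|/16\pi}$ and taking $i\to\infty$, to an expression built from $\fint_{\hat\Sigma}\mu$ and related integrals of $\mu$.

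The final — and most delicate — step is recognizing why the answer is not $\bar m(|\hat\Sigma|/4\pi)^{3/2}$ or $m(|\hat\Sigma|/4\pi)^{3/2}$ but rather $-\left(\fint_{\hat\Sigma}\mu^{2/3}\right)^{3/2}(|\hat\Sigma|/4\pi)^{3/2}$, and this is where the hypothesis $\bar m\le0$ (equivalently $\mu\le0$ pointwise is \emph{not} assumed, but $\sup\mu\le0$ is) enters. The natural estimate from the flow produces, before optimization, an integral like $\fint_{\hat\Sigma}(-\mu)^{2/3}$ raised to the $3/2$ power — a Hölder/Jensen-type inequality applied to the $s$-dependent weight in the area element forces the exponent $2/3$, because the surfaces $\tilde\Sigma_{t_i}$ are graphs over $\hat\Sigma$ whose height is determined pointwise by a balance involving $\mu$, and the area contributes a power $3/2$ relative to the linear mass term. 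When $\mu\le0$ everywhere the quantity $(-\mu)^{2/3}$ is well-defined and the inequality $\fint\mu\le-(\fint(-\mu)^{2/3})^{3/2}\cdot(\text{something})$... — more precisely, one compares $\lim m_H(\Sigma_{t_i})$ against $-\left(\fint\mu^{2/3}\right)^{3/2}(|\hat\Sigma|/4\pi)^{3/2}$ directly, and the sign of $\mu$ is exactly what makes the relevant term appear with the favorable sign rather than the unfavorable one (contrast with \cite{Neves:2010}). I expect the main obstacle to be bookkeeping: extracting the $\mu^{2/3}$ structure requires matching the next-to-leading asymptotics of $\tilde H^2$, $\tilde R$, and $\widetilde{\ric}(\tilde\nu,\tilde\nu)$ on a sequence of surfaces that are only $C^{1,\alpha}$ and only known to converge weakly, so one must avoid any argument that needs pointwise control of the second fundamental form and instead rely solely on the integral bounds ($\int|\mathring{\tilde A}|^2\to0$, $\int\tilde H^2$ bounded, $\int|\tilde\nabla^T s|^2=O(e^{-t/2})$) already in hand.
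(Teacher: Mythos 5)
Your outline correctly reproduces the reduction that the paper also makes (Gauss--Codazzi plus Gauss--Bonnet along the sequence of Lemma \ref{euler.no.jump}, discarding the nonnegative $\int|\AA|^2$ term, and isolating the Ricci defect), but two things need fixing. First, a precision issue: the conversion you quote from Lemma \ref{HSquared-4}, namely $\int_{\Sigma_t}(H^2-4)=\int_{\tilde\Sigma_t}(\tilde H^2+2\tilde R-4\widetilde{\ric}(\tilde\nu,\tilde\nu))+O(e^{-t/2})$, is useless at the order relevant here, because the mass contribution to $m_H$ is itself an $O(e^{-t/2})$ term of $\int(H^2-4)$ that only becomes finite after multiplication by $\sqrt{|\Sigma_t|}\sim e^{t/2}$; if you route the argument through the compactified curvatures you must track the $\mu$-order terms exactly, which is why the paper instead keeps the Ricci term in the physical metric, where Lemma \ref{RicciAsymptotics} gives $\ric(\nu,\nu)+2=-2\mu\rho^{-3}+|v|^2O(\rho^{-3})+o(\rho^{-3})$ and Lemma \ref{tangential-s} kills the $|v|^2$ correction.

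Second, and this is the genuine gap: the step that actually produces the stated bound is missing. After the reduction one has
\[
m_H(\Sigma_t)\le(4\pi)^{-3/2}\sqrt{|\Sigma_t|}\int_{\Sigma_t}\mu\,\rho^{-3}+o(1)
=(4\pi)^{-3/2}\Bigl(\int_{\tilde\Sigma_t}s^{-2}\Bigr)^{1/2}\Bigl(\int_{\tilde\Sigma_t}\mu\, s\Bigr)+o(1),
\]
and the exponent $2/3$ arises from one explicit H\"older inequality with exponents $3$ and $3/2$,
\[
\int_{\tilde\Sigma_t}\mu^{2/3}=\int_{\tilde\Sigma_t}s^{-2/3}(\mu s)^{2/3}
\le\Bigl(\int_{\tilde\Sigma_t}s^{-2}\Bigr)^{1/3}\Bigl(\int_{\tilde\Sigma_t}|\mu|\,s\Bigr)^{2/3},
\]
which, after raising to the $3/2$ power and using $\mu\le0$ (so that $\int\mu s=-\int|\mu|s$), yields $\bigl(\int s^{-2}\bigr)^{1/2}\int\mu s\le-\bigl(\int\mu^{2/3}\bigr)^{3/2}$ and hence the lemma via Lemma \ref{AreaConvergence}. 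Your heuristic that the surfaces are ``graphs over $\hat\Sigma$ whose height is determined pointwise by a balance involving $\mu$'' is not how the exponent appears and is not available (the weak flow surfaces are not known to be graphs); no pointwise structure is used, only the integral bounds you list plus this H\"older pairing of the weight $s^{-2}$ (from converting $|\Sigma_t|$) against $\mu s$ (from converting $\int\mu\rho^{-3}$). Finally, your parenthetical that ``$\mu\le0$ pointwise is not assumed, but $\sup\mu\le0$ is'' is self-contradictory: $\bar m=\sup_{\hat\Sigma}\mu\le0$ \emph{is} pointwise nonpositivity of $\mu$, and it is used exactly twice --- to make $\mu^{2/3}=|\mu|^{2/3}$ meaningful and, crucially, to flip the H\"older inequality into the favorable direction, which is precisely where the hypothesis $\bar m\le0$ enters.
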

\begin{proof}

Using  Gauss-Codazzi equations,  Gauss-Bonnet Theorem, and Lemma \ref{RicciAsymptotics}, we have 
\begin{multline*}
m_H(\Sigma_t) =\sqrt{\frac{|\Sigma_t|}{16\pi} }\left(1-\gen-\frac{1}{16\pi}\int_{\Sigma_t}(H^2-4)\right)\\
=\sqrt{\frac{|\Sigma_t|}{(16\pi)^3}}\left(16\pi(1-\gen)-\int_{\Sigma_t}(-2R +4K_{\Sigma_t} +4\ric(\nu,\nu)+|\AA|^2 -4)\right)\\
=\sqrt{\frac{|\Sigma_t|}{(16\pi)^3}}\left(8\pi\chi(\hat{\Sigma})-\int_{\Sigma_t}(4K_{\Sigma_t} +4(\ric(\nu,\nu)+2)+|\AA|^2 +o(\rho^{-3}))  \right)\\
 = \sqrt{\frac{|\Sigma_t|}{(16\pi)^3}}\left(8\pi\chi(\hat{\Sigma})-8\pi\chi(\Sigma_t)- \int_{\Sigma_t}|\AA|^2- 4\int_{\Sigma_t}( \ric(\nu,\nu)+2)+o(e^{-t/2}) \right)
\end{multline*}
If we choose $t$ to be one of the times from the sequence described in Lemma \ref{euler.no.jump}, we have
\begin{equation}\label{mass-inequality}
m_H(\Sigma_t)  \leq -\frac{1}{2}(4\pi)^{-3/2}\sqrt{|\Sigma_t|}\int_{\Sigma_t} (\ric(\nu,\nu)+2)+o(1).
\end{equation}
Decompose 
\[\nu= a \frac{\nabla \rho}{|\nabla \rho|} + v\] so that $v$ is orthogonal to $\nabla \rho$. We have $|v|= \frac{\nabla^T \rho}{|\nabla \rho|}$ while $a^2=1-|v|^2$. It follows from Lemma \ref{RicciAsymptotics} that
\begin{align}
\ric(\nu,\nu) &= a^2(-2-2\mu \rho^{-3}) + |v|^2(-2+O( \rho^{-3}))+o(\rho^{-3}) \nonumber\\
&=-2-2\mu {\rho}^{-3}+|v|^2 O(\rho^{-3}) +o(\rho^{-3}). \label{Ricci-estimate}
\end{align}
Note that 
\begin{multline*}
\int_{\Sigma_t} |v|^2 \rho^{-3}  
=\int_{\Sigma_t} \left(\frac{ |\nabla^T \rho|}{|\nabla \rho|}\right)^2 \rho^{-3}
=\int_{\Sigma_t} \left(\frac{ |\nabla^T s|}{|\nabla s|}\right)^2 s^{3}\\
=\int_{\tilde{\Sigma_t}}  \left(\frac{ |\tilde{\nabla}^T s|}{|\tilde{\nabla} s|}\right)^2 s
=O(e^{-t}),
\end{multline*}
where we used Lemma \ref{tangential-s} in the last line. Putting this together with formulas (\ref{mass-inequality}) and (\ref{Ricci-estimate}) we obtain
\begin{align}
m_H(\Sigma_t)&\leq (4\pi)^{-3/2}\sqrt{|\Sigma_t|}\int_{\Sigma_t} \mu \rho^{-3} +  o(1)\nonumber\\
&= (4\pi)^{-3/2}\left(\int_{\tilde{\Sigma}_t} s^{-2}\right)^{1/2} \left(   \int_{\tilde{\Sigma}_t} \mu s\right)+ o(1).\label{inequality1}
\end{align}
We use the H\"{o}lder inequality with $p=3$ and $q=3/2$ to see that
\begin{align*}
\int_{\tilde{\Sigma}_t} \mu^{2/3} &=\int_{\tilde{\Sigma}_t}  s^{-2/3}(\mu s)^{2/3}\\
&\leq \left(\int_{\tilde{\Sigma}_t}  s^{-2}\right)^{1/3}   \left(\int_{\tilde{\Sigma}_t} |\mu| s\right)^{2/3}.
\end{align*}
Taking the $3/2$ power of both sides, and using the fact that $\mu\le0$, we have
\[-\left(\int_{\tilde{\Sigma}_t} \mu^{2/3}\right)^{3/2} \ge  \left(\int_{\tilde{\Sigma}_t}  s^{-2}\right)^{1/2}   \left(\int_{\tilde{\Sigma}_t} \mu s\right).\]
Combining this with inequality (\ref{inequality1}) to conclude that
\[ m_H(\Sigma_t) \leq -\left(\frac{1}{4\pi} \int_{\tilde{\Sigma}_t} \mu^{2/3}\right)^{3/2} +O(e^{-t/2}),\]
for any sequence of $t$'s as in the previous lemma. Now the result follows from taking the limit as this sequence approaches infinity and applying monotonicity of $\tilde{\Sigma}_t$ and Lemma \ref{AreaConvergence}.
\end{proof}

\begin{proof}[Proof of Theorem \ref{penrose.thm}]
Recalling that $\hat g$ has constant curvature $-1$, $0$, or $1$ on~$\hat\Sigma$ and our normalizaton of area in the $\hat{k}=0$ case, we have 
\[|\hat{\Sigma}| = 4\pi\max(1,\gen-1)^{3/2}.\]
Using Geroch Monotonicity (Corollary~\ref{Geroch}) and the previous lemma, we can now conclude
\begin{multline*}
m_H(\partial_1 M)\leq \lim_{t\to\infty} m_H(\Sigma_t) \leq 
 -\left( \fint_{\tilde{\Sigma}_t} \mu^{2/3}\right)^{3/2}
\left( \frac{|\hat \Sigma|}{4\pi}\right)^{3/2}\\
\leq \bar m (\max\{1,\gen-1\})^{3/2},
\end{multline*}
where we used the definition $\bar{m}=\sup\mu$.

 All that remains is to prove the rigidity. We assume all of the hypotheses of Theorem \ref{penrose.thm}, as well as equality in the Penrose inequality. By Corollary~\ref{Geroch}, the Hawking mass $m_H(\Sigma_t)$ must be constant, and moreover, by equation (\ref{MassDifference}),
\begin{equation}\label{EqualityCase}
 \int_{\Sigma_t} \left(2(R+6)+|\AA|^2+4H^{-2}|\nabla H|^2\right)=0.
 \end{equation}
As argued on page 422 of \cite{Huisken-Ilmanen:2001}, it follows that $H$ is a positive constant on each~$\Sigma_t$, each $\Sigma_t$ is smooth, and there are no jump times. By the Smooth Start Lemma 2.4 of \cite{Huisken-Ilmanen:2001}, $\Sigma_t$ is a classical solution of inverse mean curvature flow foliating the manifold $M$. This allows us to think of inverse mean curvature flow as defining a diffeomorphism from $[0,\infty)\times \partial_1 M$ to $M$.  (In particular, $\partial M$ must be connected.) We can write the metric as
\[ g= H^{-2}dt^2 +  g_{\Sigma_t}.\]
Note that equation (\ref{EqualityCase}) implies that $R=-6$ everywhere, and $\AA=0$ on each $\Sigma_t$. In particular, $A=\frac{H}{2}g_{\Sigma_t}$ and thus $\partial_{t}g_{\Sigma_t}=\frac{2}{H}A = g_{\Sigma_t}$.
Hence
\begin{equation}\label{symmetric}
 g= H^{-2}dt^2 +  e^t g_{\Sigma_0}.
 \end{equation}
Recall that 
\[\frac{dH}{dt}= \Delta H^{-1} -(|A|^2+\ric(\nu,\nu))H^{-1},\]
which implies $\ric(\nu,\nu)$ is constant on each $\Sigma_t$ and so, by the Gauss-Codazzi equations, so is the Gauss curvature $K$. In particular, $\Sigma_0=\partial M$ has constant curvature. From here it is clear that $(M,g)$ must be the Kottler space, since Kottler spaces are  the unique  asymptotically locally hyperbolic manifolds with $R=-6$ of the form $g=f(r)^2 dr^2+r^2g_{\Sigma_0}$, where $g_{\Sigma_0}$ is a constant curvature metric on $\Sigma_0$.
\end{proof}

\begin{proof}[Proof of Theorem \ref{AHPMT}]
Assume the hypotheses of Theorem \ref{AHPMT}. If there exists at least one compact minimal surface in $M$, then the result follows from Corollary \ref{BoundaryPMT} applied to the exterior region of $M$.

So let us assume that $M$ contains no compact minimal surfaces, and suppose that $\bar{m}<0$. In particular, 
the mass aspect $\mu$ is everywhere negative. We can consider a weak inverse mean curvature flow whose initial condition is effectively a point, just as Huisken and Ilmanen did in \cite[Section 8]{Huisken-Ilmanen:2001}.  Then the same arguments used to prove Theorem \ref{penrose.thm} show that
\[0=\lim_{t\to0}m_H(\Sigma_t)\leq\lim_{t\to\infty}m_H(\Sigma_t)\leq \bar{m},\]
where the Hawking mass here has $\gen=0$. Rigidity follows according to an argument similar to the one used in Theorem \ref{penrose.thm} above.
\end{proof}

\section{The results of Chru\'{s}ciel-Simon: Proof of Theorem  \ref{CSTheorem}}\label{AreaMassInequalities}

Assume the hypotheses of Theorem \ref{CSTheorem}. In particular, we have a static data set $(M, g, V)$ and a reference Kottler space $(M_0, g_0, V_0)$ that has the same surface gravity. We also have the important assumption that $m_0\le 0$. We define 
\[W=|\nabla V|^2,\] and also a reference function $W_0$ on $M$ as follows: On the reference space~$M_0$, $|\nabla V_0|^2$ is constant on each level set of $V_0$ and, because of this, one may regard it as a function composed with $V_0$. That is, there exists a single-variable function $\omega$ with the property that $|\nabla V_0|^2 = \omega(V_0)$ as functions on $M_0$. We define the function $W_0$ on $M$ to be the function
\[ W_0 = \omega(V).\]
Recall the formula $V_0 = \sqrt{r^2+\hat{k}-\frac{2m}{r}}$ for an appropriate coordinate $r$ on~$M_0$. Inverting this formula, we may think of $r$ as some function of $V_0$. In a manner similar to the way we defined $W_0$, we can define a reference function $r$ on $M$ by composing this function with $V$. (It might be logical to call this function $r_0$, but that is unnecessary because there is no ambiguity here.)

\begin{lem}\label{elliptic}
Under the hypotheses of Theorem \ref{CSTheorem} and using the notation introduced above, if we consider an open set in $M$ on which $W$ does not vanish, then on that open set $W-W_0$ satisfies the elliptic inequality
\[\Delta(W-W_0) + \langle \xi,\nabla(W-W_0)\rangle + \alpha (W-W_0) \ge 0, 
\]
where $\xi$ is a smooth vector field and $\alpha$ is a smooth function whose sign is the opposite of $m_0$.
\end{lem}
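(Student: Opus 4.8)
The plan is to combine Bochner's formula for $V$ with the static equations \eqref{static1}--\eqref{static2}, and then to subtract the analogue of this computation carried out on the reference Kottler space; the latter will simultaneously produce a second-order ODE for $\omega$ and cause all the purely function-of-$V$ terms to cancel after subtraction. On the open set where $W\ne0$ I would first establish the exact identity
\[ \Delta W \;=\; 2|\hess_g V|^2 \;+\; \tfrac{1}{V}\,\langle\nabla W,\nabla V\rangle, \]
which follows from Bochner's formula once one inserts $\Delta_g V=3V$ (so $\nabla(\Delta_g V)=3\nabla V$) and $\ric(g)(\nabla V,\nabla V)=\tfrac{1}{2V}\langle\nabla W,\nabla V\rangle-3W$, the latter being immediate from \eqref{static2} and $\hess_g V(\nabla V,\nabla V)=\tfrac12\langle\nabla W,\nabla V\rangle$. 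Since $W\ne0$, the level sets of $V$ are regular surfaces with unit normal $N=\nabla V/\sqrt W$; decomposing $\hess_g V$ relative to $N$ gives normal--normal entry $\langle\nabla W,\nabla V\rangle/(2W)$, mixed contribution $|\nabla^{T}W|^2/(4W)$, and tangential block with trace $\Delta_g V-\hess_g V(N,N)=3V-\langle\nabla W,\nabla V\rangle/(2W)$. Bounding that $2\times2$ block below by one half the square of its trace (Cauchy--Schwarz in dimension two) converts the identity above into a lower bound for $\Delta W$ written entirely in terms of $V$, $W$, $|\nabla W|^2$ and $\langle\nabla W,\nabla V\rangle$.

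Next I would record the corresponding facts about the reference. On $M_0$ the same manipulations hold with \emph{equality} in the Cauchy--Schwarz step, since the level sets of $V_0$ are totally umbilic and $\nabla W_0$ is parallel to $\nabla V_0$; together with $\Delta_{g_0}W_0=\omega''(V_0)\,W_0+3V_0\,\omega'(V_0)$ this forces a single ODE relating $\omega$, $\omega'$, $\omega''$ and $V_0$ (equivalently, one may use the explicit relations $W_0=(r+m_0r^{-2})^2$, $V_0^2=r^2+\hat k-2m_0/r$, which give $\omega'(V_0)=2V_0(1-2m_0r^{-3})$). Transplanting $W_0=\omega(V)$ to $M$, one has $\nabla W_0=\omega'(V)\nabla V$, $\Delta_g W_0=\omega''(V)\,W+3V\,\omega'(V)$ --- note the genuine $W=|\nabla V|^2_g$ appears here, not $W_0$ --- and $\nabla^{T}W_0=0$ still holds.

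The heart of the argument is then the subtraction. Writing $f=W-W_0$ and substituting $\nabla W=\omega'(V)\nabla V+\nabla f$ into the Step~1 lower bound, and subtracting $\Delta_g W_0$, I expect: (i) the pure functions of $V$ cancel identically, by the ODE above; (ii) the terms linear in $\nabla f$ assemble into $\langle\xi,\nabla f\rangle$ with $\xi$ a smooth vector field proportional to $\nabla V$ (smooth since $V>0$ on the interior of $M$ and we are where $W\ne0$); and (iii) the terms quadratic in $\nabla f$ reduce to $\tfrac{|\nabla f|^2}{W}-\tfrac{\langle\nabla f,\nabla V\rangle^2}{4W^2}$, which is nonnegative by Cauchy--Schwarz (using $|\nabla V|^2=W$) and hence may be discarded. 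What remains is a zeroth-order term $\alpha f$, with $\alpha$ a function of $V$ equal, after that absorption, to $\omega''(V)-\omega'(V)/V$; evaluating via the Kottler relations one finds $\alpha$ proportional to $m_0\,V^2\big/\big(r^2(r^3+m_0)\big)$, so its sign is dictated by that of $m_0$ once one knows $r^3+m_0>0$. The latter holds throughout the exterior Kottler region because the largest zero $r_{m_0}$ of $V_0$ satisfies $r_{m_0}^3>-m_0$ whenever $m_0$ is strictly above the critical mass (which is the case here, since the surface gravity $\kappa$ is positive); this is the precise place where the hypothesis $m_0\le0$ is invoked.

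The main obstacle is the bookkeeping in the subtraction step: one must verify that \emph{every} $\nabla f$-quadratic term combines into a single manifestly nonnegative expression --- in particular it is crucial to keep, rather than throw away, the $|\nabla f|^2/W$ that comes out of expanding $|\nabla W|^2/W$, because it is exactly what dominates the unfavorable $-\langle\nabla f,\nabla V\rangle^2/(4W^2)$ --- and that the function-of-$V$ terms cancel exactly, both of which hinge on having the ODE for $\omega$ correct. Once that is done, the conclusion about the sign of $\alpha$ is an elementary consequence of the positivity of $r^3+m_0$ on the exterior region.
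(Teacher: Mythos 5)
Your proposal is correct and follows essentially the paper's own argument: Bochner's formula combined with the static equations, the decomposition of $\hess V$ relative to the level sets of $V$, the ODE for $\omega$ obtained by performing the same computation on the reference space (where the level sets are umbilic and $\nabla W_0\parallel\nabla V_0$) and transplanting it via $V_0\mapsto V$, and the explicit Kottler relations giving the zeroth-order coefficient $\omega''(V)-\omega'(V)/V=12m_0V^2/\bigl(r^2(r^3+m_0)\bigr)$; the only difference is bookkeeping in the subtraction step, where you expand $\nabla W=\omega'(V)\nabla V+\nabla f$ and discard the resulting quadratic remainder $\frac{|\nabla f|^2}{W}-\frac{\langle\nabla f,\nabla V\rangle^2}{4W^2}\ge0$ by Cauchy--Schwarz, while the paper groups the same terms as $\tfrac34\,\partial_V(W+W_0)\,\partial_V(W-W_0)$ and absorbs them into the drift term, so both yield the stated inequality with a smooth $\xi$. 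One small correction to your closing remark: the positivity $r^3+m_0>0$ is not where $m_0\le0$ is invoked (it follows from $m_0>m_{\mathrm{crit}}$, i.e.\ $\kappa>0$, exactly as you argue); the hypothesis $m_0\le 0$ enters only through the resulting sign of the zeroth-order coefficient, and your formula for $\alpha$ agrees with what the paper's computation produces and with what the maximum-principle application in Corollary \ref{compare} requires.
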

This is the critical ingredient of the proof of Theorem \ref{CSTheorem} and it corresponds to equation (VII.15) in \cite{Chrusciel-Simon:2001}.
\begin{proof}
Since $\nabla V\ne 0$, the level sets of $V$ are smooth surfaces which we denote $\Sigma_V$. We introduce the notation
\[ \partial_V = \frac{\nabla V}{|\nabla V|^2}= \frac{\nabla V}{W}.\]
Note that $\partial_V$ has the nice property that if $f$ is a single-variable function, then 
\[\partial_V (f(V)) = f'(V).\]

Recall the static equations 
\[\Delta_g V = 3 V\quad\mbox{and}\quad\ric(g)=\frac{1}{V}\hess(V)-3 g.\]
Also note that $\nabla W = 2\hess V(\nabla V, \cdot)$ and thus
\[\partial_V W = 2\hess V\left(\frac{\nabla V}{|\nabla V|}, \frac{\nabla V}{|\nabla V|}\right)=\frac{2}{W}\hess V(\nabla V, \nabla V). \]
Choose an orthonormal frame $e_1, e_2, e_3$ such that $e_3=\frac{\nabla V}{|\nabla V|}$. Starting with the B\"{o}chner formula and the static equations above, we have
\begin{align*}
\Delta W &= 2|\hess V|^2 + 2\ric(\nabla V, \nabla V) + 2\langle \nabla(\Delta V), \nabla V\rangle \\
 &= 2\sum_{i,j=1}^3|\hess V(e_i, e_j)|^2+ \tfrac{2}{V}\hess V(\nabla V, \nabla V) \\
 &= 2|\hess V(e_3, e_3)|^2 + 4\sum_{i=1}^2|\hess V(e_3, e_i)|^2 \\
 &\quad +   2\sum_{i,j=1}^2|\hess V(e_i, e_j)|^2+ \tfrac{2}{V}\hess V(\nabla V, \nabla V)\\
 &= 2\left|\hess V\left(\tfrac{\nabla V}{|\nabla V|}, \tfrac{\nabla V}{|\nabla V|}\right)\right|^2 +\tfrac{4}{|\nabla V|^2}\sum_{i=1}^2|\hess V (\nabla V, e_i) |^2 \\
 & \quad+ 2|\nabla V|^2 |A_{\Sigma_V}|^2 + \tfrac{2|\nabla V|^2}{V}\hess V\left(\tfrac{\nabla V}{|\nabla V|}, \tfrac{\nabla V}{|\nabla V|}\right)\\
 \\
 &= \tfrac{1}{2} (\partial_V W)^2    + \tfrac{1}{W}|\nabla^T W |^2  +  2W( |\mathring{A}_{\Sigma_V}|^2+\tfrac{1}{2}H_{\Sigma_V}^2)+ \tfrac{W}{V}\partial_V W\\
 \\
 &= \tfrac{1}{2} (\partial_V W)^2+WH_{\Sigma_V}^2 + \tfrac{W}{V}\partial_V W
  + \tfrac{1}{W}|\nabla^T W |^2 +  2W |\mathring{A}_{\Sigma_V}|^2\\
  \\
 &= \tfrac{1}{2} (\partial_V W)^2+[\hess V(e_1, e_1)+\hess V(e_2, e_2)]^2+ \tfrac{W}{V}\partial_V W
  + \tfrac{1}{W}|\nabla^T W |^2 +  2W |\mathring{A}_{\Sigma_V}|^2\\
  \\
 &= \tfrac{1}{2} (\partial_V W)^2+[\Delta V - \hess V(e_3,e_3)]^2+ \tfrac{W}{V}\partial_V W
  + \tfrac{1}{W}|\nabla^T W |^2 +  2W |\mathring{A}_{\Sigma_V}|^2\\
  \\
 &= \tfrac{1}{2} (\partial_V W)^2+[3V - \hess V(e_3,e_3)]^2+ \tfrac{W}{V}\partial_V W
  + \tfrac{1}{W}|\nabla^T W |^2 +  2W |\mathring{A}_{\Sigma_V}|^2\\
  \\
 &= \tfrac{1}{2} (\partial_V W)^2+[3V - \tfrac{1}{2}\partial_V W]^2+ \tfrac{W}{V}\partial_V W  + \tfrac{1}{W}|\nabla^T W |^2 +    2W |\mathring{A}_{\Sigma_V}|^2\\
 \\
 &= \tfrac{3}{4} (\partial_V W)^2-3V\partial_V W +9V^2
+ \tfrac{W}{V}\partial_V W  + \tfrac{1}{W}|\nabla^T W |^2 +    2W |\mathring{A}_{\Sigma_V}|^2\\
\end{align*}
Meanwhile,
\begin{align*}
 \Delta W_0 &= \Div( \nabla (\omega(V)))
=\Div(\omega'(V)\nabla V )\\
 &=\omega''(V)W + \omega'(V)\Delta V
=\omega''(V)W +3 \omega'(V)V.
\end{align*}
Notice that both of the computations above provide a way to compute $\Delta_0 (\omega(V_0))$ in the reference space $M_0$. Since $\omega(V_0)$ is constant on level surfaces of $V_0$ and since those level surfaces are umbilic, the first computation yields
\begin{align*}
 \Delta_0 (\omega(V_0)) 
  &= \tfrac{3}{4} |\partial_{V_0} (\omega(V_0))|^2-3V\partial_{V_0} (\omega(V_0)) +9V_0^2
+ \tfrac{\omega(V_0)}{V_0}\partial_{V_0} (\omega(V_0)) \\
&= \tfrac{3}{4} |\omega'(V_0)|^2-3V\omega'(V_0) +9V_0^2
+ \tfrac{\omega(V_0)}{V_0}\omega'(V_0).
 \end{align*}
Meanwhile, if we do the second computation in the reference space, we find
\[ \Delta_0 (\omega(V_0))=\omega''(V_0)\omega(V_0) +3 \omega'(V_0)V_0.\]
Equating the right-hand sides of the two equations above, we actually obtain a differential equation for the single-variable function $\omega$, which means that we can replace the $V_0$ by $V$ to obtain the following equation on the original space $M$:
\begin{align*}
\omega''(V)\omega(V) +3 \omega'(V)V &= \tfrac{3}{4} |\omega'(V)|^2-3V\omega'(V) +9V^2
+ \tfrac{\omega(V)}{V}\omega'(V)\\
\omega''(V)W_0 +3 \omega'(V)V &= \tfrac{3}{4} |\partial_V W_0|^2-3V\partial_V W_0 +9V^2
+ \tfrac{W_0}{V}\partial_V W_0.
\end{align*}

Picking up from our expression for $\Delta W_0$ above, we now have
\begin{align*}
 \Delta W_0&=\omega''(V)W +3 \omega'(V)V\\
 &= \omega''(V)W_0 +3 \omega'(V)V + \omega''(V)(W-W_0) \\
&= \tfrac{3}{4} |\partial_V W_0|^2-3V\partial_V W_0 +9V^2+ \tfrac{W_0}{V}\partial_V W_0+ \omega''(V)(W-W_0).\\
\end{align*}
Now we can subtract this expression from our expression for $\Delta W$ (except for the last two nonnegative terms which we ignore) to find
\begin{multline*}
\Delta(W-W_0)\ge \tfrac{3}{4} (\partial_V W)^2-3V\partial_V W +9V^2+ \tfrac{W}{V}\partial_V W \\
\quad -\left[ \tfrac{3}{4} |\partial_V W_0|^2-3V\partial_V W_0 +9V^2+ \tfrac{W_0}{V}\partial_V W_0\right] - \omega''(V)(W-W_0)\\
= \tfrac{3}{4}\partial_V (W+W_0)\partial_V(W-W_0) -3V\partial_V (W-W_0)\\
\quad+\tfrac{W}{V}\partial_V W -  \tfrac{W}{V}\partial_V W_0 +\tfrac{W}{V}\partial_V W_0 - \tfrac{W_0}{V}\partial_V W_0- \omega''(V)(W-W_0)\\
= \left[\tfrac{3}{4}\partial_V (W+W_0)-3V+\tfrac{W}{V}\right] \partial_V(W-W_0) +(W-W_0)\tfrac{\omega'(V)}{V}- \omega''(V)(W-W_0)\\
= \left[\tfrac{3}{4}\partial_V (W+W_0)-3V+\tfrac{W}{V}\right] \partial_V(W-W_0) +[\tfrac{\omega'(V)}{V}- \omega''(V)](W-W_0).
\end{multline*}
It only remains to compute the sign of the zero order coefficient. Using the the formula
\[V_0 = \left(r^2+\hat{k}-\frac{2m_0}{r} \right)^{1/2}\]
in $M_0$, we can perform simple computations using the chain rule to show that
\[ \omega(V) = \left(r+\frac{m_0}{r^2}\right)^2. \]
and 
\[ \frac{dV}{dr} = \frac{\sqrt{\omega(V)}}{V}. \]
Therefore
\[ \omega'(V) = 2V\left(1-\frac{2m_0}{r^3}\right),\]
and
\begin{align*} 
\omega''(V) &=2V\left(1-\frac{2m_0}{r^3}\right)  + 2V\frac{6m_0}{r^4}\frac{dr}{dV}\\
 &=\frac{\omega'(V)}{V}+ \frac{12V^2}{\sqrt{\omega(V)}}\frac{m_0}{r^4}.
 \end{align*}
 Thus
 \[\tfrac{\omega'(V)}{V}- \omega''(V) = - \frac{12V^2}{\sqrt{W_0}}\frac{m_0}{r^4}.\]
So we see that as long as $m_0\le 0$, the coefficient of $W-W_0$ is nonnegative.

\end{proof}

\begin{cor}\label{compare}
Under the hypotheses of Theorem \ref{CSTheorem} and using the notation introduced above, we have
$W\le W_0$ on $M$. 
\end{cor}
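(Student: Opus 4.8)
The plan is to derive the comparison $W\le W_0$ from the elliptic inequality of Lemma~\ref{elliptic} by a maximum principle, after determining the behaviour of $u:=W-W_0$ on $\partial M$ and at infinity.

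First I would collect the boundary and asymptotic data. The reference Kottler space $(M_0,g_0,V_0)$ is chosen to have the same surface gravity $\kappa$ as the distinguished component $\partial_1M$, and $W_0=\omega(V)$ with $\omega(0)=\kappa^2$; since $\kappa$ is the \emph{largest} surface gravity among the components of $\partial M$, on each component $\partial_j M$ we have $V=0$ and $W=|\nabla V|^2=\kappa_j^2$ with $\kappa_j\le\kappa$, so $u=\kappa_j^2-\kappa^2\le 0$ there. Next, $|\nabla_0V_0|^2=(r+m_0r^{-2})^2$ is nondecreasing in $r$ (here $m_0\le 0$ is used) and equals $\kappa^2$ on the horizon $\partial M_0$, so $\omega(t)\ge\kappa^2>0$ for every $t\ge 0$; hence $W_0=\omega(V)\ge\kappa^2>0$ on all of $M$, and in particular $\{u>0\}\subseteq\{W\ne 0\}$, the region where Lemma~\ref{elliptic} applies. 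Finally, by Proposition~\ref{StaticAsymptotics} one has $V^2=\rho^2+\hat k-4\mu\rho^{-1}+o_1(\rho^{-1})$; feeding this into $W=|\nabla V|^2$ (via the asymptotics of $g$) and into $W_0=\omega(V)$ shows that $W$ and $W_0$ have the same expansion through order $\rho^0$, both equal to $\rho^2+O(\rho^{-1})$, so $u\to 0$ as $\rho\to\infty$.

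Now suppose, for contradiction, that $\sup_M u>0$, and set $\Omega:=\{u>0\}$, an open subset of the interior of $M$ (as $u\le 0$ on $\partial M$) contained in $\{W\ne 0\}$. On $\Omega$, Lemma~\ref{elliptic} gives $\Delta u+\langle\xi,\nabla u\rangle+\alpha u\ge 0$; and because $m_0\le 0$ the term $\alpha u$ is of a sign that lets it be discarded on $\{u>0\}$ (this is the only place $m_0\le 0$ is used, as in the sign computation at the end of the proof of Lemma~\ref{elliptic}), leaving
\[
\Delta u+\langle\xi,\nabla u\rangle\ge 0\qquad\text{on }\Omega .
\]
So $u$ is a subsolution on $\Omega$ of a uniformly elliptic operator with no zeroth-order term, whose first-order coefficient is continuous on $\Omega$ (since $W\ge\kappa^2>0$ there) and hence bounded on the relatively compact set $\Omega\cap\{\rho<R\}$. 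The weak maximum principle on $\Omega\cap\{\rho<R\}$ then bounds $\sup_{\Omega\cap\{\rho<R\}}u$ by the supremum of $u$ over its topological boundary, which lies in $\{u=0\}\cup\{\rho=R\}$; since $u\le 0$ off $\Omega$, this gives $\sup_{\{\rho<R\}}u\le\max\bigl(0,\,\sup_{\{\rho=R\}}u\bigr)$. Letting $R\to\infty$ and using $u\to 0$ at infinity forces $\sup_M u\le 0$, a contradiction; thus $W\le W_0$ on $M$.

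The step I expect to be the main obstacle is the sign bookkeeping in the maximum principle: the zeroth-order coefficient in Lemma~\ref{elliptic} does not have the favourable sign for general $m_0$, and even under $m_0\le 0$ it must be exploited on the set $\{u>0\}$, where it multiplies a positive function, rather than through a naive maximum principle applied to $u$ on all of $M$. A minor additional point is that the inequality of Lemma~\ref{elliptic} is only asserted where $W\ne 0$; this causes no difficulty, since on $\{u>0\}$ one has $W=W_0+u>W_0\ge\kappa^2>0$ automatically.
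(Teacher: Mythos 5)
Your argument is correct in substance and takes essentially the paper's route: $W\le W_0$ on $\partial M$ from the largest-surface-gravity hypothesis, $W-W_0\to 0$ at infinity from Proposition \ref{StaticAsymptotics}, and exclusion of a positive interior supremum via the elliptic inequality of Lemma \ref{elliptic}, with $m_0\le0$ entering only through the sign of the zeroth-order term. The paper does the last step by applying the strong maximum principle at a positive interior maximum point $p$, where $W(p)>W_0(p)>0$ guarantees the lemma applies near $p$; you instead restrict to $\{W>W_0\}$, drop the zeroth-order term, and use the weak maximum principle on an exhaustion. These are the same idea in different packaging, and your observation that $W_0=\omega(V)\ge\kappa^2>0$ everywhere (so $\{W>W_0\}\subset\{W\ne0\}$) is a useful explicit justification of a point the paper only asserts.

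Two points need tightening. First, the sign bookkeeping: writing $u=W-W_0$, dropping $\alpha u$ on $\{u>0\}$ requires $\alpha\le 0$ there, i.e.\ the zeroth-order coefficient must have the \emph{same} sign as $m_0$ once it is placed on the same side as $\Delta u$. This is exactly what the computation at the end of the proof of Lemma \ref{elliptic} yields, namely $\alpha=12V^2m_0/(r^4\sqrt{W_0})\le0$ when $m_0\le0$ (the word ``opposite'' in the lemma's statement refers to the coefficient as it appears on the other side of the displayed inequality); but your closing remark, which speaks of exploiting the term because it ``multiplies a positive function'' on $\{u>0\}$, reads as though a coefficient $\alpha\ge0$ could still be discarded there, which it could not --- with $\alpha\ge0$ the inequality $\Delta u+\langle\xi,\nabla u\rangle\ge-\alpha u$ gives nothing. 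Second, your claim that the first-order coefficient is bounded on $\Omega\cap\{\rho<R\}$ does not follow from continuity on $\Omega$: the vector field $\xi$ involves $W/V$, and $\overline{\Omega}$ may meet $\partial M$, where $V=0$, so the drift need not be bounded up to the closure. This is easily repaired, either by running your argument on $\{u>\epsilon\}$ for small $\epsilon>0$ (whose closure avoids $\partial M$ and, since $u\to0$ at infinity, is compact in the interior, so all coefficients are bounded there and the boundary value of $u$ is $\epsilon$), or by arguing at an interior maximum point with the strong maximum principle as the paper does.
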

\begin{proof}
By the definition of $W_0$, we know that $W=W_0$ on $\partial_1 M$, and $W\le W_0$ on all of $\partial M$ since $\partial_1 M$ was assumed to have the largest surface gravity of any boundary component. We also know from Proposition \ref{StaticAsymptotics} that \mbox{$W-W_0=0$} at the conformal infinity. (We will see a more detailed calculation below.) Suppose that $W>W_0$ somewhere in $M$. Since $W\le W_0$ at the boundary and also ``at infinity,'' the function $W-W_0$ must achieve its positive maximum value at some point $p$ in the interior of $M$. Since $W(p)>W_0(p)>0$, we see that the previous lemma applies to some open set $U$ containing $p$. By the maximum principle for the elliptic inequality given by the Lemma, $W-W_0$ cannot have a local maximum in $U$, which is a contradiction.
\end{proof}

\begin{proof}[Proof of Theorem \ref{CSTheorem}]
We claim that for any point $p\in\partial_1 M$, 
\[ K_{\partial_1 M}(p) \ge K_{\partial M_0},\]
where $K_{\partial_1 M}(p)$ is the Gaussian curvature of $\partial_1 M$ at $p$, while  $K_{\partial M_0}$ is the constant Gaussian curvature of the $\partial M_0$ in the reference space. Integrating this inequality and using the Gauss-Bonnet Theorem, the claim clearly implies that
\[ \frac{\chi(\partial_1 M)}{|\partial_1 M|} \ge \frac{\chi(\partial M_0)}{|\partial_1 M_0| }, \]
so we now focus on proving the claim.

For the following, we use the same notation as in the proof of Lemma~\ref{elliptic}.  Since $W\ne 0$ at $\partial M$, the vector field $\partial_V$ is well-defined near $\partial_1 M$, and we can consider the flow $\varphi_v$ generated by $\partial_V$ near $\partial_1 M$. Choose a point $p\in\partial_1 M$. Applying Taylor's Theorem to the function $W$ restricted to the flow line starting at $p$, we have
\[ W(\varphi_v(p))= W(p) + [(\partial_V W)(p)] v +  [(\partial^2_V W)(p)] v^2 + O(v^3).\] 
We know that $W(p)=\kappa^2$, where $\kappa$ is the surface gravity of $\partial_1 M$. Next,
\begin{align*}
\partial_V W&= 2\hess V(\nabla V, \partial_V)
 = 2\hess V(e_3, e_3)\\
&= V(2\ric(e_3, e_3) +6)=0,
\end{align*}
where the last identity follows because $V$ vanishes on $\partial M$.  Following same reason and using the Gauss-Codazzi equations plus the fact that $\partial M$ is totally geodesic, we have 
\begin{multline*}
\partial^2_V W= (2\ric(e_3, e_3) + 6) + V \partial_V(2\ric(e_3, e_3) + 6)\\
=2\ric_p(e_3, e_3) + 6=R-2K_{\partial_1 M}(p)+6=-2K_{\partial_1 M}.
\end{multline*}
Thus
\[ W(\varphi_v(p))= \kappa^2 -2[K_{\partial_1 M}(p)]v^2+O(v^3).\]
Performing the same computation in the reference solution, we obtain
\[ W_0(\varphi_v(p))= \kappa^2 -2K_{\partial M_0}v^2+O(v^3).\]
The claim now follows from Corollary \ref{compare}.

We now focus on proving the mass inequality $\mu\le m_0$. Recall from Proposition \ref{StaticAsymptotics} that we have
\[V^2 = \rho^2+\hat{k} -\tfrac{4}{3}\mu\rho^{-1}+o_1(\rho^{-1}),\]
where $\rho$ is a coordinate as in the definition of asymptotically locally hyperbolic.
Differentiating this, we find
\[2V\nabla V = (2\rho+\tfrac{4}{3}\mu\rho^{-2}) \nabla \rho+o(\rho^{-1}).\]
Taking the norm-square of both sides, and using the asymptotically locally hyperbolic property,
\begin{align*}
V^2 W &= (\rho+\tfrac{2}{3}\mu\rho^{-2})^2|\nabla\rho|^2+o(\rho) \\
&= (\rho^2+\tfrac{4}{3}\mu\rho^{-1})(\rho^2+\hat{k})+o(\rho) \\
&= \rho^4+\hat{k}\rho^2+\tfrac{4}{3}\mu\rho+o(\rho).
\end{align*}
Then
\begin{align*}
W &= \frac{\rho^4+\hat{k}\rho^2+\tfrac{4}{3}\mu\rho+o(\rho)}{\rho^2+\hat{k} -\tfrac{4}{3}\mu\rho^{-1}+o(\rho^{-1})}\\
 &=\rho^2 \frac{ 1+\hat{k}\rho^{-2}+\tfrac{4}{3}\mu\rho^{-3}}{1+\hat{k}\rho^{-2} - \tfrac{4}{3}\mu\rho^{-3}}+o(\rho^{-1})\\
 &= \rho^2+\tfrac{8}{3}\mu\rho^{-1}+o(\rho^{-1}).
\end{align*}
Recall that we also have 
\[r^2+\hat{k}-\frac{2m_0}{r}=V^2= \rho^2+\hat{k} -\tfrac{4}{3}\mu\rho^{-1}+o_1(\rho^{-1}),\]
 by definition of the function $r$. In particular, 
 \[ r^{-1}= \rho^{-1}+o(\rho^{-1}).\]
 By changing variables from $r$ to $\rho$, we have
\begin{align*}
W_0 &= \left(r+\frac{m_0}{r^2}\right)^2
 = r^2 + 2m_0 r^{-1} +O(r^{-4})\\
 &= (r^2+\hat{k}-2m_0 r^{-1})-\hat{k} + 4m_0 r^{-1} +O(r^{-4})\\
 &= \rho^2 -\tfrac{4}{3}\mu\rho^{-1}+ 4m_0 \rho^{-1} +o(\rho^{-1}).
\end{align*}
Comparing these asymptotic expansions for $W$ and $W_0$ and using Corollary \ref{compare}, we see that as $\rho\to\infty$, we have
\[ \tfrac{8}{3}\mu \le -\tfrac{4}{3}\mu+4m_0,\]
from which the result follows.
\end{proof}

\bibliographystyle{hplain}
\bibliography{2012references}

\end{document}